\documentclass[
reqno]{amsart}
\usepackage{latexsym,amsmath,amssymb,amscd,amsthm}
\usepackage[all]{xy}
\usepackage{enumerate}
\usepackage{xcolor}
\usepackage{hyperref}

\def\today{\ifcase \month \or
   January \or February \or March \or April \or
   May \or June \or July \or August \or
   September \or October \or November \or December \fi
   \space\number\day , \number\year}

\setcounter{tocdepth}{2}
\makeatletter
  \newcommand\@dotsep{4.5}
  \def\@tocline#1#2#3#4#5#6#7{\relax
     \ifnum #1>\c@tocdepth 
     \else
     \par \addpenalty\@secpenalty\addvspace{#2}%
     \begingroup \hyphenpenalty\@M
     \@ifempty{#4}{%
     \@tempdima\csname r@tocindent\number#1\endcsname\relax
        }{%
         \@tempdima#4\relax
           }%
      \parindent\z@ \leftskip#3\relax \advance\leftskip\@tempdima\relax
      \rightskip\@pnumwidth plus1em \parfillskip-\@pnumwidth
       #5\leavevmode\hskip-\@tempdima #6\relax
       \leaders\hbox{$\m@th
       \mkern \@dotsep mu\hbox{.}\mkern \@dotsep mu$}\hfill
       \hbox to\@pnumwidth{\@tocpagenum{#7}}\par
       \nobreak
        \endgroup
         \fi}
\makeatother

\DeclareMathOperator\lie{L}


\linespread{1.1}

\begin{document}
	
\definecolor{forest}{RGB}{0,100,0}

\makeatletter
\@addtoreset{figure}{section}
\def\thefigure{\thesection.\@arabic\c@figure}
\def\fps@figure{h,t}
\@addtoreset{table}{bsection}

\def\thetable{\thesection.\@arabic\c@table}
\def\fps@table{h, t}
\@addtoreset{equation}{section}
\def\theequation{
\arabic{equation}}
\makeatother

\newcommand{\bfi}{\bfseries\itshape}
\newtheorem{theorem}{Theorem}
\newtheorem{corollary}[theorem]{Corollary}
\newtheorem{criterion}[theorem]{Criterion}
\newtheorem{lemma}[theorem]{Lemma}
\newtheorem{proposition}[theorem]{Proposition}

\theoremstyle{definition}
\newtheorem{definition}[theorem]{Definition}
\newtheorem{example}[theorem]{Example}
\newtheorem{notation}[theorem]{Notation}
\newtheorem{problem}[theorem]{Problem}
\newtheorem{remark}[theorem]{Remark}
\numberwithin{theorem}{section}
\numberwithin{equation}{section}

\newcommand{\todo}[1]{\vspace{5 mm}\par \noindent
\framebox{\begin{minipage}[c]{0.85 \textwidth}
\tt #1 \end{minipage}}\vspace{5 mm}\par}

\renewcommand{\1}{{\bf 1}}

\newcommand{\hotimes}{\widehat\otimes}

\newcommand{\Ad}{{\rm Ad}}
\newcommand{\Alt}{{\rm Alt}\,}
\newcommand{\Ci}{{\mathcal C}^\infty}
\newcommand{\comp}{\circ}
\newcommand{\wt}{\widetilde}

\newcommand{\ad}{\textrm{ad}}
\newcommand{\bp}{{\bf p}}
\newcommand{\bs}{{\bf s}}
\newcommand{\bt}{{\bf t}}
\newcommand{\ph}{\text{\bf P}}
\newcommand{\conv}{{\rm conv}}
\newcommand{\de}{{\rm d}}
\newcommand{\ee}{{\rm e}}
\newcommand{\ev}{{\rm ev}}
\newcommand{\fimes}{\mathop{\times}\limits}
\newcommand{\id}{{\rm id}}
\newcommand{\ie}{{\rm i}}
\newcommand{\End}{{\rm End}\,}
\newcommand{\Gr}{{\rm Gr}}
\newcommand{\GL}{{\rm GL}}
\newcommand{\Hilb}{{\bf Hilb}\,}
\newcommand{\Hom}{{\rm Hom}}
\renewcommand{\Im}{{\rm Im}}
\newcommand{\Ker}{{\rm Ker}\,}
\newcommand{\lf}{{\rm l}}
\newcommand{\Loc}{{\rm Loc}\,}
\newcommand{\nor}{{\rm nor}}
\newcommand{\pr}{{\rm pr}}
\newcommand{\Ran}{{\rm Ran}\,}
\newcommand{\rank}{{\rm rank}\,}
\newcommand{\sa}{{\rm sa}}
\newcommand{\spec}{{\rm spec}\,}
\newcommand{\supp}{{\rm supp}\,}
\newcommand{\tto}{\rightrightarrows}

\newcommand{\Tr}{{\rm Tr}\,}
\newcommand{\Tran}{\textbf{Trans}}

\newcommand{\CC}{{\mathbb C}}
\newcommand{\RR}{{\mathbb R}}
\newcommand{\NN}{{\mathbb N}}
\newcommand{\TT}{{\mathbb T}}

\newcommand{\G}{{\rm G}}
\newcommand{\U}{{\rm U}}
\newcommand{\Gl}{{\rm GL}}
\newcommand{\SL}{{\rm SL}}
\newcommand{\SU}{{\rm SU}}
\newcommand{\VB}{{\rm VB}}

\newcommand{\Ac}{{\mathcal A}}
\newcommand{\Bc}{{\mathcal B}}
\newcommand{\Cc}{{\mathcal C}}
\newcommand{\Dc}{{\mathcal D}}
\newcommand{\Ec}{{\mathcal E}}
\newcommand{\Fc}{{\mathcal F}}
\newcommand{\Gc}{{\mathcal G}}
\newcommand{\Hc}{{\mathcal H}}
\newcommand{\Ic}{{\mathcal I}}
\newcommand{\Jc}{{\mathcal J}}
\newcommand{\Kc}{{\mathcal K}}
\renewcommand{\Mc}{{\mathcal M}}
\newcommand{\Nc}{{\mathcal N}}
\newcommand{\Oc}{{\mathcal O}}
\newcommand{\Pc}{{\mathcal P}}
\newcommand{\Qc}{{\mathcal Q}}
\newcommand{\Rc}{{\mathcal R}}
\newcommand{\Sc}{{\mathcal S}}
\newcommand{\Tc}{{\mathcal T}}
\newcommand{\Uc}{{\mathcal U}}
\newcommand{\Vc}{{\mathcal V}}
\newcommand{\Wc}{{\mathcal W}}
\newcommand{\Xc}{{\mathcal X}}
\newcommand{\Yc}{{\mathcal Y}}
\newcommand{\Zc}{{\mathcal Z}}
\newcommand{\Ag}{{\mathfrak A}}
\renewcommand{\gg}{{\mathfrak g}}
\newcommand{\hg}{{\mathfrak h}}
\newcommand{\mg}{{\mathfrak m}}
\newcommand{\nng}{{\mathfrak n}}
\newcommand{\pg}{{\mathfrak p}}
\newcommand{\ug}{{\mathfrak u}}
\newcommand{\Gg}{{\mathfrak g}}
\newcommand{\Lg}{{\mathfrak L}}
\newcommand{\M}{\mathfrak{M}}
\newcommand{\Sg}{{\mathfrak S}}
\newcommand{\Ug}{{\mathfrak U}}

\markboth{}{}

\makeatletter
\title[Unitary group orbits versus groupoid orbits of normal operators]{Unitary group orbits versus groupoid orbits\\ of normal operators}
\author{Daniel Belti\c t\u a}
\address{Institute of Mathematics ``Simion Stoilow'' of the Romanian Academy,
P.O. Box 1-764, Bucharest, Romania}
\email{Daniel.Beltita@imar.ro} 
\author{Gabriel Larotonda}
\address{Departamento de Matemat\'ica, FCEYN-UBA, and Instituto Argentino de Matem\'atica, CONICET, Buenos Aires, Argentina}
\email{glaroton@dm.uba.ar}
\date{\today. 
\textbf{File name}: \texttt{BLv17.tex}}

\keywords{normal operator; operator ideal; unitary orbit; groupoid}
\subjclass[2020]{Primary 22E65; Secondary 22A22, 47B10, 47B15, 58B25}
\makeatother

\begin{abstract}
We study the unitary 
orbit of a normal operator $a\in \Bc(\Hc)$, regarded as a homogeneous space for the action of unitary groups associated with symmetrically normed ideals of compact operators. 
We show with an unified treatment that the orbit is a submanifold of the  
differing ambient spaces if and only if the spectrum of $a$ is finite, and in that case it is a closed submanifold. 
For arithmetically mean closed ideals, we show that nevertheless the orbit always has a natural manifold structure, modeled by the kernel of a suitable conditional expectation. 
When the spectrum of $a$ is not finite, we describe the closure of the orbits of $a$ for the different norm topologies involved. 
We relate these results to 
the action of the groupoid of the partial isometries 
via the moment map given by the range projection of normal operators. 
We show that all these groupoid orbits also have differentiable structures for which the target map is a smooth submersion. 
For any normal operator $a$ we also describe the norm closure of its groupoid orbit $\Oc_a$, which leads to necessary and sufficient spectral conditions on $a$ ensuring that $\Oc_a$ is norm closed and that $\Oc_a$ is a closed embedded submanifold of~$\Bc(\Hc)$. 
\end{abstract}

\maketitle

\tableofcontents

\section{Introduction}
\label{Sect1}

The literature concerning topological and geometrical aspects of unitary orbits $\Oc(a)$ of a normal (in particular: self-adjoint) operator $a$ acting in a separable Hilbert space~$\Hc$ is vast, but disperse, going back to the seminal work of Fialkow \cite{Fi75}, and related results before that. 
In 
this paper 
we 
bring together central aspects of this recurring theme in operator ideals 
and operator algebras, 
in a unified presentation, obtaining also generalizations of those results. 
Specifically, we 
study 
the differentiable structure of the norm closure of the unitary orbits. 
The natural framework for studying these unitary orbit closures turns out to be provided by the Banach-Lie groupoid of partial isometries introduced in \cite{OS}.

In more detail, assume that we act on a normal operator $a\in \Bc(\Hc)$ via 
$\pi(u):=u\cdot a:=uau^*$, with the Banach-Lie group $\U_{\Jc}$ of unitary operators in $\1+\Jc$ for a symmetrically normed ideal $\Jc\subseteq\Bc(\Hc)$. 
The differential of this map at the identity operator $\1\in\U_{\Jc}$ is $\pi_{*\1}=-\ad\, a$, $x\mapsto[x,a]=xa-ax$, and the stabilizer subgroup of this action is denoted $\U_{\Jc}(a)\subseteq \U_{\Jc}$. 
The orbit itself  $\Oc_{\Jc}(a):=\pi(\U_{\Jc})$ is a subset of the linear space  $\Bc(\Hc)$, but when $a\in \Ic$ 
for an operator ideal $\Ic$, 
then $\Oc_{\Jc}(a)\subseteq \Ic$ as well. 
There are 
several basic problems that are interlaced, and consist of finding necessary and sufficient conditions on the normal operator $a$ and the symmetrically normed ideals $\Jc$ and $\Ic$ so that the following assertions hold true: 
\begin{enumerate}
\item 
\label{Sect1_item1}
$\U_{\Jc}(a)
$ is a Lie subgroup of the Banach-Lie group $\U_\Jc$ with its subspace topology.
\item 
\label{Sect1_item2}
$\Oc_{\Jc}(a)\simeq \U_{\Jc}/\U_{\Jc}(a)$ is an (abstract) differentiable Banach manifold.
\item 
\label{Sect1_item3}
the action map $\pi\colon\U_{\Jc}\to\Oc_{\Jc}(a)$ has local continuous cross-sections.
\item 
\label{Sect1_item4}
the range of $\ad\,a:T_\1(\U_{\Jc})\to \Ic$ is closed.
\item 
\label{Sect1_item5}
the subset $\Oc_{\Jc}(a)\subseteq \Ic$ is closed.
\item 
\label{Sect1_item6}
the subset $\Oc_{\Jc}(a)\subseteq \Ic$ is an immersed submanifold
\item 
\label{Sect1_item7}
the subset $\Oc_{\Jc}(a)\subseteq \Ic$ is an embedded submanifold (with its inherited topology).
\end{enumerate}
We will show via an unified approach that, for any normal operator $a\in\Bc(\Hc)$, the first two of the above conditions can be obtained when 
the ideal $\Jc$ is arithmetically mean closed 
(cf. \cite{kw11})
or when $\Jc$ admits a dual pairing as in~\eqref{duality} below. 
Then we will show that the last four conditions are equivalent to each other, and in fact equivalent to the fact that $a$ has \textit{finite} spectrum 
(Theorem~\ref{locl}). 
There are several topologies involved (the operator norm topology, and the ones given by the norms of the ideals $\Ic$ and $\Jc$), and we deal with all of them systematically. 
This is done in Section \ref{sec:orbit}.

This brings out the problem of describing the closure of the orbit, when the spectrum of the normal operator $a$ is infinite. 
This is done in Section \ref{sec:closure}. 
If $\Fc(\Hc)$ is the ideal of finite-rank operators and $a\in\Ic$, then 
it turns out that 
$$
\Oc_{\Fc(\Hc)}(a)\subseteq 
\Oc_{\Ic}(a)\subseteq 
\{vav^*\mid v^*v=p_a\}=\overline{\Oc_{\Bc(\Hc)}(a)}^{\Vert\cdot\Vert}
\subseteq\Ic,
$$ 
where $p_a$ is the range projection of $a$. 
We prove that when $\Ic$ is a separable symmetrically normed ideal, the closures are equal
$$\overline{\Oc_{\Fc(\Hc)}(a)}^{\Vert\cdot\Vert_\Ic}
=\overline{\Oc_{\Ic}(a)}^{\Vert\cdot\Vert_\Ic}
=
\overline{\Oc_{\Bc(\Hc)}(a)}^{\Vert\cdot\Vert}.$$
Conversely, we show that if $\Oc_{\Bc(\Hc)}(a)\subseteq \overline{\Oc_{\Fc(\Hc)}(a)}^{\Vert\cdot\Vert_\Ic}$ whenever $0\le a\in \Ic$, then the symmetrically normed ideal~$\Ic$ is separable (Theorem~\ref{closures_th}).

In Section \ref{sec:groupoid} we find the natural framework for the above 
occurrence of the set of partial isometries with fixed initial space $\Vc_a=\{v\in \Bc(\Hc)\mid v^*v=p_a\}$. 
This set $\Vc_a\subseteq \Bc(\Hc)$ was studied by several authors and in particular Andruchow et. al in \cite{AnCoMb05} established  that $\Vc_a$ is a submanifold of $\Bc(\Hc)$.  

Denoting by $\Pc(\Hc)\subseteq\Vc(\Hc)$ the sets of orthogonal projections and partial isometries on $\Hc$, there are the groupoid $\Vc(\Hc)\tto\Pc(\Hc)$ (cf. \cite{OS}) and the natural map 
on the set of normal operators 
$\Bc(\Hc)^{\nor}\to\Pc(\Hc)$, $a\mapsto p_a$. 
These structures allow one to define a groupoid action, 
which defines an action groupoid  $\Vc(\Hc)\ast \Bc(\Hc)^{\nor}\tto \Bc(\Hc)^{\nor}$. 
For any $a\in\Bc(\Hc)^{\nor}$, its corresponding groupoid orbit is 
$$
\Oc_a=\{vav^*\mid (v,a)\in  \Vc(\Hc)\ast \Bc(\Hc)^{\nor}\}
=\{vav^*\mid  v^*v=p_a\}
\subseteq \Bc(\Hc)^{\nor}
$$
which for compact $a$ is exactly the uniform closure of the unitary orbit of $a$, as mentioned above. 
We study this orbit with $\Bc(\Hc)$ replaced by a general von Neumann algebra $\M\subseteq\Bc(\Hc)$, establishing fundamental differentiability properties which in the case of the orbit of a normal operator $a\in \Bc(\Hc)$ can be subsumed to the following facts: 
The mapping $\Vc_a\to \Oc_a$, $v\mapsto vav^*$, is a principal bundle whose structural group 
$$
K_a:=\{w\in \Vc(\Hc)\mid w^*w=p_a,\, waw^*=a\}\subseteq \Vc_a\cap \U(p_a\Hc)
$$
is a Banach-Lie group and a submanifold of $\Vc_a$; the inclusion map $\Oc_a\subseteq \Kc(\Hc)$ is smooth and its tangent map at every point is injective; and finally, the inclusion map $\Oc_{\Bc(\Hc)}(a)\hookrightarrow \Oc_a$  is smooth, and its tangent map at every point is injective 
(Corollary~\ref{smcl0}). 
Thus, in the special case of a compact operator $a$, this gives a good picture of the relation between the unitary orbit of $a$ and its closure, when both are regarded as smooth manifolds 
(Corollary~\ref{smcl}). 
Moreover, for any normal operator $a\in\Bc(\Hc)^{\nor}$ we describe the norm closure of its groupoid orbit $\overline{\Oc_a}$ 
(Proposition~\ref{P3}), which leads to necessary and sufficient spectral conditions on $a$ ensuring that $\Oc_a$ is norm closed (Corollary~\ref{C4}) and that $\Oc_a$ is a closed embedded submanifold of~$\Bc(\Hc)$ 
(Theorems \ref{immersed}  and \ref{top_th}).

\section{Basic definitions and background}
\label{Sect2}

\begin{notation}
We denote by $\Hc$ a separable infinite-dimensional complex Hilbert space, by $\Bc(\Hc)$ the set of all bounded linear operators on $\Hc$. 
The spectrum of any operator $a\in\Bc(\Hc)$ is denoted by $\spec(a)$. 

For any subset $S\subseteq \Bc(\Hc)$ we denote
$$
S^{\sa}
:=\{a\in S\mid a=a^*\} 
\text{ and }
S^{\nor}
:=\{a\in S\mid aa^*=a^*a\}. 
$$ 
In particular $\Bc(\Hc)^{\sa}$ are the self-adjoint operators 
and $\ie \Bc(\Hc)^{\sa}$ are the skew-adjoint operators. We also let
$$
\U(\Hc):=\{u\in\Bc(\Hc)\mid uu^*=u^*u=\1\}
$$
where $\1\in\Bc(\Hc)$ is the identity operator. We denote by $\Fc(\Hc)$ the set of all finite-rank operators on $\Hc$ and by $\Kc(\Hc)$ the set of all compact operators on~$\Hc$. We denote by $\Sg_p(\Hc)$ the $p$-th Schatten ideal if $1\le p\le\infty$, where $\Sg_\infty(\Hc):=\Kc(\Hc)$. 

Since the Hilbert space $\Hc$ is fixed, we may drop it from the notation sometimes, for the sake of simplicity. 
\end{notation}

\begin{definition}\label{adm}
A \emph{symmetrically normed ideal} is a two-sided ideal $\Jc\subseteq \Bc(\Hc)$ endowed with a norm $\Vert\cdot\Vert_{\Jc}$ satisfying 
$$
\Vert axb\Vert_{\Jc}\le\Vert a\Vert \Vert x\Vert_{\Jc} \Vert b\Vert
$$
for all $x\in\Jc$ and $a,b\in\Bc(\Hc)$, where $\Vert\cdot\Vert$ denotes the usual operator norm of $\Bc(\Hc)$. 
We also assume \textit{that $\Jc$ with its norm is a Banach space}. 
We choose the normalization $\Vert x\Vert _{\Jc}=\Vert x\Vert $ for any 
rank-one projection $x=\xi\otimes\overline{\xi}$, $\xi\in\Hc$. 
(Recall from \cite[Ch. III, Th. 1.1]{GoKr69} that any nonzero two-sided ideal of $\Bc(\Hc)$ contains all finite-rank operators.)
\end{definition}

We now recall some fundamental facts on symmetrically normed ideals. 
Our main reference on this subject is Gohberg and Krein's book \cite{GoKr69}. If $x=u\vert x\vert $ is the polar decomposition of $x\in \Jc$, then
$$
\Vert x\Vert_{\Jc}=\Vert x^*\Vert _{\Jc}
=\Vert\vert x\vert\Vert_{\Jc}\quad\textrm{ and }\quad \Vert x\Vert _{\Jc}\le \Vert x\Vert .
$$
If $0\le a\le b$ then by Douglas' Lemma \cite{douglas} there exist a contraction $0\le c\le \1$ such that $a=bc$, hence 
$$
0\le a\le b\quad \Rightarrow \quad \Vert a\Vert _{\Jc}\le \Vert b\Vert _{\Jc}.
$$

Let $x=u\vert x\vert $ be the polar decomposition of a compact operator, and let $\vert x\vert=
\lim\limits_{n\to\infty} x_n=\lim\limits_{n\to\infty} \sum\limits_{k=1}^n s_k  p_k$ be the Schmidt decomposition of 
$\vert x\vert$. 
Here $s_k(x)\ge 0$ are the \textit{singular values} of $x$
 (that is, the eigenvalues of $\vert x\vert $), and $p_k=e_k\otimes \overline{e_k}$ are the rank-one ortho-projections of $\vert x\vert $. 
 The convergence is in the operator norm since $s_k(x)\downarrow 0$. 
 If the symmetric norm is given, we can consider the operators $x\in \Kc(\Hc)$ with $\sup\limits_{n\ge 1} \Vert x_n\Vert _{\Jc}<\infty$, 
 and then define $\Vert x\Vert _{\Jc}'=\lim\limits_{n\to\infty} \Vert x_n\Vert _{\Jc}$. 
 \textit{The norms $\Vert\cdot\Vert_{\Jc}$ and $\Vert\cdot\Vert_{\Jc}'$ agree on the ideal of finite rank operators}. 

\begin{notation}
The set of all operators $x\in\Kc(\Hc)$ with $\sup_n \Vert x_n\Vert _{\Jc}<\infty$ is the \textit{maximal ideal} $\Jc^M$ for the given norm $\Vert \cdot\Vert _{\Jc}$, and it is always a Banach space. 
We also consider the  \textit{minimal ideal} $\Jc^m$ which is the closure in the norm $\Vert \cdot\Vert_{\Jc}$ of the finite rank operators, hence it is also complete. 
The minimal ideal is then a separable Banach space, and it is not hard to check that the ideal $\Jc$ is separable if and only if it coincides with its minimal ideal $\Jc^m$ \cite[Ch. III, Th. 6.2]{GoKr69}.
\end{notation}

Since $0\le x_n=\sum\limits_{k=1}^n s_k(x)p_k\le \vert x\vert $, and the norm only depends on the positive part of $x=u\vert x\vert $, we have $\Vert x_n\Vert _{\Jc}\le \Vert x\Vert _{\Jc}<\infty$ if $x\in \Jc$, then $\Vert x\Vert _{\Jc}'\le \Vert x\Vert _{\Jc}<\infty$ thus
$$
\Jc^m\subseteq \Jc \subseteq \Jc^M.
$$
Moreover, for any finite rank operator $x$ we have $\vert x\vert =\sum\limits_{k=1}^n s_k(x)p_k$ therefore $\Vert x\Vert _{\Jc}\le \sum\limits_{k=1}^n s_k(x)$, hence
$$
\Sg_1(\Hc)\subseteq  \Jc^M\subseteq \Kc(\Hc) \textrm{ and } \Vert x\Vert \le \Vert x\Vert _{\Jc}'\le \Vert x\Vert _1
$$
for all $x\in \Kc$.

\begin{definition}[arithmetically mean closed ideals] 
The ideal $\Jc$ is \textit{arithmetically mean closed}  if $y\in \Jc$ implies $x\in \Jc$ 
whenever $x,y\in \Kc(\Hc)$ and $\sum\limits_{k=1}^n s_k(x)\le \sum\limits_{k=1}^n s_k(y)$ for all $n\in\NN$. 
We will refer to this property as \textit{am-closed} for short. 
The maximal and minimal ideals for a given symmetric norm $\Vert \cdot\Vert _{\Jc}$ are always am-closed by the dominance property and $\Vert x\Vert _{\Jc}'\le \Vert y\Vert _{\Jc}'$ for those ideals. 
(See the dominance property in \cite[Chapter III, \S 4]{GoKr69}.)
\end{definition}

\begin{remark}[ideals with two norms]
It would be nice to characterize the symmetrically normed ideals $\Jc$ such that $\Jc$ is complete with the maximal norm $\Vert \cdot\Vert _{\Jc}'$. Equivalently, since the maximal ideal is complete, $\Jc$ must be closed in the maximal ideal. 
By the open mapping theorem, and since $\Vert \cdot\Vert _{\Jc}'\le \Vert \cdot\Vert _{\Jc}$ always holds, the ideal $\Jc$ is closed in $\Jc^M$ if and only if there exists a constant $C>0$ such that $\Vert \cdot\Vert _{\Jc}\le C\Vert \cdot\Vert _{\Jc}'$; that is, if the norm of $\Jc$ is equivalent to the maximal norm. Varga \cite[Theorem 2 and Remark 5]{varga} supplied (a family of) examples of complete normed ideals that are not am-closed; but his family of examples are closed in the maximal norm. 
\end{remark}

\begin{definition}[conditional expectations]
\label{discrete}
For any subset $A\subseteq\NN$ 
and any family of mutually orthogonal projections  $\mathcal P=\{p_k\}_{k\in A}$ with $\sum\limits_{k\in A}p_k=\1$ (with convergence in the strong operator topology if $A$ is infinite) let  $E:\Bc(\Hc)\to \Bc(\Hc)$ denote the corresponding conditional expectation 
\begin{equation}\label{discrete_eq1}
E(x)=\sum_{k\in A} p_k xp_k
\end{equation}
with convergence in the strong operator topology for any $x\in\Bc(\Hc)$. 
The range of~$E$ is 
\begin{equation}\label{discrete_eq2}
\Ran E=\{p_k\mid k\in A\}'.
\end{equation}
We have $E(x)\in \Kc(\Hc)$ if $x\in \Kc(\Hc)$. 
Moreover, for arbitrary $x\in\Bc(\Hc)$, 
\begin{equation}\label{singexp}
\sum_{k=1}^n s_k(E(x))\le \sum_{k=1}^n s_k(x)
\end{equation}
for all $n\in\mathbb N$ 
by \cite[Ch. II, Th. 5.1]{GoKr69}. 
In particular, for $n=1$, we obtain $\Vert E\Vert=1$. 
\end{definition}

In what follows we will examine in more detail what happens when we restrict and co-restrict $E$ to operator ideals.

\begin{remark}[continuity of $E\vert_{\Jc}$]
\label{contexp}
In the setting of Definition~\ref{discrete}, if $\Jc$ is a symmetrically normed ideal and $x\in \Jc$, from \eqref{singexp} we obtain $E(x)\in  \Jc^M$ and moreover by the dominance property
\begin{equation}\label{contexp_eq1}
 \Vert E(x)\Vert _{\Jc}'\le \Vert x\Vert _{\Jc}'\le \Vert x\Vert _{\Jc},
\end{equation}
which implies that the operator $E\vert_{\Jc}\colon(\Jc,\Vert\cdot\Vert_\Jc')\to(\Jc^M,\Vert\cdot\Vert_\Jc')$ satisfies $\Vert E\vert_{\Jc}\Vert=1$. 
Here we may have $E(x)\in \Jc^M\setminus\Jc$ for some $x\in\Jc$. 
In fact, if all the projections $p_k$ are rank one, 
the condition $E(\Jc)\subseteq \Jc$ is equivalent to the fact that $\Jc$ is am-closed by \cite[Theorem 4.5]{kw11}. 
If $E(\Jc)\subseteq \Jc$ then, by the closed graph theorem, the operator  $E\vert_{\Jc}\colon(\Jc,\Vert\cdot\Vert_\Jc)\to(\Jc,\Vert\cdot\Vert_\Jc)$ is also continuous 
although its norm need not be equal to~1. 
However, if $\Jc$ is separable then it is equal to $\Jc^m$ thus it is arithmetically closed and both norms agree. 

We note also that if the collection of projections is \textit{finite}, then $E(\Jc)\subseteq \Jc$ and the continuity of $E\vert _{\Jc}$ is obvious.
\end{remark}

\begin{remark}[conditional expectation of a normal diagonalizable operator]\label{conde}
An operator $a\in \Bc(\Hc)^{\nor}$ is called \emph{diagonalizable} if $\Hc$ is spanned by the eigenvectors of~$a$.   
If this is the case, let $A\subseteq\NN$ for which $\spec(a)$ can be labeled as $\{\lambda_k\mid k\in A\}\subseteq\CC$, with $\lambda_j\ne\lambda_k$ if $j\ne k$. 
For any $k\in A$ let $p_k\in\Bc(\Hc)$ denote the orthogonal projection onto the eigenspace $\Ker(a-\lambda_k\1)$, hence the hypothesis that $a$~is a normal operator ensures that $p_jp_k=0$ if $j\ne k$. 
The hypothesis that the operator~$a$ is diagonalizable then implies $\sum\limits_{k\in A}p_k=\1$, and  
we can then write 
\begin{equation}\label{alambdda}
a=\sum_{k\in \mathbb A} \lambda_k p_k
\end{equation}
with convergence in the strong operator topology. 
This implies $\{p_k\mid k\in A\}'=\{a\}'$ and, 
if $E\colon\Bc(\Hc)\to\Bc(\Hc)$ is the conditional expectation associated 
to the family of mutually orthogonal projections $\{p_k\mid k\in A\}$ as in Definition~\ref{discrete}, then 
we have 
\begin{equation}\label{conde_eq1}
\Ran E=\{a\}'
\end{equation} by \eqref{discrete_eq2}. 
Moreover, $E(\Kc(\Hc))=\{a\}'\cap\Kc(\Hc)$.

If $\Ic$ is any symmetrically normed ideal and $a\in\Bc(\Hc)^\nor$ is diagonalizable as above, 
then we obtain the continuous map 
\begin{equation*}
E\vert_{\Ic}\colon \Ic\to \{a\}'\cap \Ic^M
\end{equation*} 
by Remark~\ref{contexp}. 
If moreover either $\spec(a)$ is finite and $\Ic$ is any symmetrically normed ideal, 
or the symmetrically normed $\Ic\subsetneqq\Bc(\Hc)$ is am-closed, 
then, by Remark~\ref{contexp} again, we have the continuous map 
\begin{equation*}
E\vert_{\Ic}\colon \Ic\to \{a\}'\cap \Ic. 
\end{equation*} 
 However, if $\Ic\subsetneqq\Kc(\Hc)$, we have 
 $E(\Kc(\Hc))\not\subseteq \{a\}'\cap \Ic$  
 since $E(\Kc(\Hc)\cap\{a\}')=\Kc(\Hc)\cap\{a\}'\not\subseteq \Ic\cap\{a\}'$ 
 irrespective of wheather or not  $a\in\Ic$. 
\end{remark}

\begin{definition}[congruence unitary groups] 
For a given operator ideal $\Jc\subseteq \Bc(\Hc)$, we define its corresponding unitary group 
$$
\U_{\Jc}=\U_{\Jc}(\Hc):=(\1+\Jc)\cap\U(\Hc). 
$$
This is a Banach-Lie group if $\Jc$ is a symmetrically normed ideal, 
and then its Lie algebra  is 
$$
\ug_{\Jc}=\ug_{\Jc}(\Hc):=\{x\in\Jc\mid x^*=-x\}=\ie\Jc^{\sa}.
$$
In fact, $\U_{\Jc}$ is an algebraic subgroup of the group of invertible elements of the Banach algebra  $\Jc\oplus\CC\1$, therefore it is a Banach-Lie subgroup with the inherited manifold topology of the group of invertible operators, the later being an open subset of the Banach algebra. 
See \cite{HK77} 
for more details. 
\end{definition}

\begin{remark}\label{uconnected}
The group $\U_{\Jc}$ is always connected. 
To see this, note that each $u\in\U_{\Jc}$ has a Borel logarithm $z^*=-z\in \Bc(\Hc)$ such that $\Vert z\Vert \le \pi$.
 Consider the holomorphic map $F:\mathbb C\to \mathbb C$ given by $F(\lambda)=\lambda^{-1}(e^{\lambda}-1)$, and note that the zeroes of $F$ are located at $2k\pi \ie$, $k\in \mathbb Z\setminus\{0\}$. 
Since $\spec(z)\subset [-\pi,\pi]$, we have $0\notin \spec(F(z))=F(\spec(z))$ thus $F(z)$ is invertible in $\Bc(\Hc)$. Since $u=1+x$ for some $x\in \Jc$,  $x+1=u=e^z$  thus $e^z-1=x$. But then 
$$
z=F(z)^{-1}F(z)z=F(z)^{-1}(e^z-1)=F(z)^{-1}x\in \Jc.
$$
From this, we can now see that $t\mapsto e^{tz}$ is a continuous path in $\Uc_{\Jc}$ joining the operators $\1$ and $u$.
\end{remark}

\section{Unitary orbits of normal operators}
\label{sec:orbit}

In this section we establish that, for any symmetrically normed ideals $\Ic,\Jc\subseteq\Bc(\Hc)$, the conditions \eqref{Sect1_item3}--\eqref{Sect1_item7} 
in Section~\ref{Sect1} are equivalent to the fact that the normal operator~$a$ has finite rank (Theorem~\ref{locl}). 

We begin by considering the adjoint action of unitary groups in the space of bounded linear operators: 

\begin{definition}[adjoint action and orbits]
The \emph{adjoint action} of $\U_{\Jc}$ on $\Bc(\Hc)$ is given by the map
$$\Ad_{\Jc}\colon \U_{\Jc}\times \Bc(\Hc)\to\Bc(\Hc),\quad 
(u,a)\mapsto\Ad_{\Jc}(u)a:=uau^*$$
and for every $a\in\Bc(\Hc)$ its corresponding $\U_{\Jc}$-orbit is 
$$\Oc_{\Jc}(a):=\{uau^*\mid u\in \U_{\Jc}\}.$$ 
\end{definition}

\begin{remark}\label{incl}
Let $\Jc, \Ic\subseteq\Bc(\Hc)$ be 
any ideals with $a\in \Ic$.
We claim that 
	$$
	\Oc_{\Jc}(a)\subseteq a+ \Ic\cap \Jc\subseteq \Ic .
	$$
	In fact 
	$z:=uau^*\in\Ic$ if $a\in \Ic$ and $u\in \U_{\Jc}$. 
	On the other hand, writing $u=\1+x\in \U_{\Jc}$ with $x\in \Jc$, 
	we have 
	$$
	z
	=(\1+x)a(\1+x^*)=a+xa+ax^*+xax^*=a+j \in a+\Jc
	$$	
where $j:=xa+ax^*+xax^*\in\Jc$ since $\Jc$ is an ideal. 
Also $j=z-a\in \Ic-\Ic=\Ic$, hence $j\in\Ic\cap\Jc$, 
and then $uau^*=z=a+j\in a+\Ic\cap\Jc$, as claimed.
\end{remark}

\subsection{The action map and the smooth structure of the orbit}

\begin{definition}
\label{action}
For any symmetrically normed ideals $\Jc,\Ic\subseteq\Bc(\Hc)$ and $a\in\Ic$, we define
$$
\pi=\pi^a_{\Jc,\Ic}:\U_{\Jc}\to \Oc_{\Jc}(a),\quad u\mapsto uau^*.
$$	
\end{definition}

Writing $u=\1+x,u_0=\1+x_0\in \1+\U_{\Jc}$, we obtain
$$
\Vert uau^*-u_0au_0^*\Vert _{\Ic}=\Vert (u-u_0)a+a(u^*-u_0^*)\Vert _{\Ic}\le 2\Vert u-u_0\Vert  \Vert a\Vert _{\Ic}\le 2\Vert u-u_0\Vert _{\Jc} \Vert a\Vert _{\Ic}
$$
therefore $\pi^a_{\Jc,\Ic}$ is continuous. 
Moreover a similar argument shows that $\pi\colon\U_{\Jc}\to \Ic$ is smooth, with its differential 
$$\pi_{*u}\colon T_u \U_{\Jc}\to \Jc,\quad 
\pi_{*u}\,v=vau^*-uau^*vu^*=u[u^*v,a]u^*.
$$
Here we use the identification 
$$T_u \U_{\Jc}=\{v\in\Bc(\Hc)\mid u^*v\in\ug_{\Jc}=\ie{\Jc}^{\sa}\}$$ 
and then $u[u^*v,a]u^*\in \Jc^{\sa}$ is self-adjoint if $v\in T_u \U_{\Jc}$ 
and $a\in\Ic^{\sa}$.

Keeping $a\in\Ic$ fixed as above, we now define 
$$\delta_a\colon\Bc(\Hc)\to\Ic,\quad \delta_a (v):=-\ad(a)(v)=[v,a].
$$ 
Then $\delta_a\vert _{\ie\Jc^{\sa}}=\pi_{*\1}$ and by Remark \ref{incl} we obtain 
\begin{equation}\label{pi}
\pi_{*\1}=\delta_a\vert _{\ie\Jc^\sa}: \ie\Jc^{\sa}\to  \Ic\cap\Jc. 
\end{equation}
Note also that  $\pi_{*\1}$ (or equivalently, $\delta$) is continuous 
if we take it as a map from $\Jc$ into any of the three spaces $\Jc,\Ic,\Bc(\Hc)$.

In Lemma~\ref{splits} below we use the above notation and Remark~\ref{conde}.

\begin{lemma}\label{splits} 
Let $\Ic,\Jc$ be symmetrically normed ideals 
and $a\in\Bc(\Hc)^{\nor}$ be diagonalizable with its corresponding conditional expectation 
 $E\vert_\Ic:\Ic \to \{a\}'\cap \Ic^M$.  
 Then the following assertions hold. 
\begin{enumerate}[{\rm(i)}]
\item\label{splits_item1} $\Ker((\id_{\Bc(\Hc)}-E)\vert_\Ic)= \Ker (\delta_a\vert_\Ic)$.
\item\label{splits_item2} 
If $\delta_a(\Jc^{\sa})\subseteq \Ic$  and the operator $\delta_a\vert _{\ie\Jc^\sa}: \ie\Jc^{\sa}\to  \Ic$ has closed range, 
then $\spec(a)$ is finite. 
\end{enumerate}
\end{lemma}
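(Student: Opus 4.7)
The plan is as follows. For (i), equation~\eqref{conde_eq1} yields $\Ran E = \{a\}'$, so for any $v \in \Ic$ we have $v \in \Ker((\id_{\Bc(\Hc)} - E)\vert_\Ic)$ iff $v = E(v)$ iff $v \in \{a\}'$ iff $[v,a]=0$ iff $v \in \Ker(\delta_a\vert_\Ic)$; hence the two kernels coincide.

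For (ii) I argue contrapositively: if $\spec(a)$ were infinite, I will exhibit a test sequence in $\ie\Jc^\sa$ that rules out closed range via the open mapping theorem. Since $a$ is bounded and $\spec(a)$ is compact, infinitude produces distinct eigenvalues $\mu_n \in \spec(a)$ converging to some $\lambda_* \in \CC$. For each $n$ I pick a unit eigenvector $\xi_n \in \Ker(a - \mu_n \1)$; these are pairwise orthogonal by normality of $a$. Set
\[
x_n := \xi_n \otimes \overline{\xi_{n+1}} - \xi_{n+1} \otimes \overline{\xi_n},
\]
which is a finite-rank skew-adjoint operator, so $x_n \in \ie\Jc^\sa$. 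Using $a\xi_k = \mu_k \xi_k$ and $a^*\xi_k = \overline{\mu_k}\xi_k$ (normality of $a$), a direct calculation gives
\[
\delta_a(x_n) = (\mu_{n+1} - \mu_n)\bigl(\xi_n \otimes \overline{\xi_{n+1}} + \xi_{n+1} \otimes \overline{\xi_n}\bigr).
\]
The rank-2 self-adjoint operator in parentheses has singular values $(1,1)$ independent of $n$, so its $\Ic$-norm is a constant $C_\Ic$ independent of $n$; hence $\|\delta_a x_n\|_\Ic = C_\Ic|\mu_{n+1} - \mu_n| \to 0$.

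For the distance lower bound, any $w \in \Ker \delta_a$ commutes with $a$, hence preserves each eigenspace $\Ker(a-\mu_k\1)$; in particular $w\xi_{n+1}$ is orthogonal to $\xi_n$. Since $x_n \xi_{n+1} = \xi_n$, we get $\|(x_n - w)\xi_{n+1}\|^2 = 1 + \|w\xi_{n+1}\|^2 \ge 1$, so $\|x_n - w\| \ge 1$, and since the symmetric $\Jc$-norm dominates the operator norm, $\|x_n - w\|_\Jc \ge 1$. If $\delta_a\vert_{\ie\Jc^\sa}\colon \ie\Jc^\sa \to \Ic$ had closed range, the open mapping theorem (applied to the induced bijection $\ie\Jc^\sa / \Ker \delta_a \to \Ran \delta_a$) would yield $C > 0$ with $\inf_{w \in \Ker\delta_a} \|v - w\|_\Jc \le C \|\delta_a v\|_\Ic$ for every $v \in \ie\Jc^\sa$; applied to $v = x_n$ this forces $1 \le C C_\Ic |\mu_{n+1} - \mu_n| \to 0$, the desired contradiction.

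The main obstacle is engineering a test sequence that is bounded in $\Jc$-norm, has commutators with $a$ of vanishing $\Ic$-norm, and is uniformly separated from the commutant of $a$ in $\Jc$-norm. This is resolved by the rank-2 skew-adjoint blocks $x_n$: the orthogonality of eigenspaces for distinct eigenvalues of the normal operator $a$ delivers the lower distance bound, while the singular values of both $x_n$ and $\delta_a(x_n)/(\mu_{n+1} - \mu_n)$ are frozen at $(1,1)$ throughout the sequence, so the decay of $\|\delta_a x_n\|_\Ic$ comes purely from $\mu_{n+1} - \mu_n \to 0$.
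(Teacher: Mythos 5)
Your proof of part (i) matches the paper's. For part (ii), your overall strategy (pick distinct eigenvalues with $|\mu_{n+1}-\mu_n|\to 0$, build rank-two test operators supported on two eigenvectors, and contradict the open-mapping estimate) is the same as the paper's, but the way you obtain the lower bound on the quotient norm is genuinely different. The paper establishes it by routing through the conditional expectation $E$: using part (i) it deduces $(\id-E)(z-z_0)=0$ for a near-optimal $z_0$, so $z-E(z)=z_0-E(z_0)$, and then bounds $\Vert z\Vert_\Jc'\le 2\Vert z_0\Vert_\Jc$ via $\Vert E\Vert\le1$. You instead prove directly that every $w\in\Ker\delta_a$ preserves eigenspaces and hence $w\xi_{n+1}\perp\xi_n$, giving $\Vert x_n-w\Vert\ge 1$ pointwise and therefore $\Vert x_n-w\Vert_\Jc\ge1$. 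Your geometric argument makes part (ii) logically independent of part (i) and avoids $E$ altogether, which is a small but real simplification; the paper's route is more uniform with the structure of the rest of the section (where $E$ carries the burden). One minor gap: before invoking the open mapping theorem you should note that $\delta_a\vert_{\ie\Jc^\sa}\colon\ie\Jc^\sa\to\Ic$ is continuous. This does not follow from the estimate $\Vert[v,a]\Vert_\Ic\le 2\Vert a\Vert\,\Vert v\Vert_\Ic$ alone, since the domain carries the $\Jc$-norm rather than the $\Ic$-norm; the paper obtains continuity from the hypothesis $\delta_a(\Jc^\sa)\subseteq\Ic$ by the closed graph theorem, using the continuity of $\delta_a\colon\Jc^\sa\to\Bc(\Hc)$ and of the inclusion $\Ic\hookrightarrow\Bc(\Hc)$. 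You should add that sentence.
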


\begin{proof}
\eqref{splits_item1}
We clearly have $\Ker\delta_a=\{a\}'$, while $\{a\}'=\Ran E$ by~\eqref{conde_eq1}. 
Moreover, since $E^2=E$, we have $\Ran E=\{x\in\Bc(\Hc)\mid E(x)=x\}=\Ker(\id_{\Bc(\Hc)}-E)$. 
Therefore $\Ker(\id_{\Bc(\Hc)}-E)= \Ker \delta_a$ and, intersecting both sides of this equality with~$\Ic$, we obtain the assertion.

\eqref{splits_item2}
The hypothesis directly implies that $\delta_a\colon \Jc^\sa\to\Ic$ has closed range. 
Moreover, since both $\delta_a\colon \Jc^\sa\to\Bc(\Hc)$  and 
the inclusion map $\Ic\hookrightarrow\Bc(\Hc)$ are continuous, 
while $\delta_a(\Jc)\subseteq\Ic$ by hypothesis, 
it follows by the closed graph theorem that the linear operator
$\delta_a\vert _{\Jc^\sa}\colon \Jc^\sa\to\Ic$ is continuous.  Now, by the open mapping theorem 
 there exists a constant $C>0$ such that whenever $y=\delta_a(z)$ with $z\in \Jc^\sa$,  there exists $z_0\in \Jc^\sa$ such that $\delta_a(z)=\delta_a(z_0)$ and $\Vert z_0\Vert _{\Jc}\le C\Vert \delta(z)\Vert _{\Ic}$. 
 
 Assuming that $\spec(a)$ is infinite,  
 we may take $A=\{1,2,\dots\}$ in Remark~\ref{conde}. 
 Selecting a suitable subsequence of $\spec(a)$, we may assume that  
 $\vert \lambda_j-\lambda_{j+1}\vert <1/(4C)$ for every $j\ge 1$. 
 Select 
 any $\xi_j\in \Ran p_j$ with $\Vert \xi_j\Vert=1$, and then  $a\xi_j=\lambda_j \xi_j$,  
 and consider the rank-two operator $z:=\xi_j\otimes \overline{\xi_{j+1}}+\xi_{j+1}\otimes \overline{\xi_j}\in\Fc(\Hc)^\sa\subseteq\Jc^\sa$.
 Then 
$$
 \vert z\vert =z^2= \xi_j\otimes \overline{\xi_j} + \xi_{j+1}\otimes\overline{\xi_{j+1}}
 \ge \xi_j\otimes \overline{\xi_j},
$$
thus $\Vert z\Vert _{\Jc}'\ge \Vert \xi_j\otimes \overline{\xi_j}\Vert _{\Jc}'=1$. 
Note that 
$$za-za= (\lambda_j-\lambda_{j+1})\xi_j\otimes \overline{\xi_{j+1}} - (\lambda_j-\lambda_{j+1})\xi_{j+1}\otimes \overline{\xi_j},
$$
thus
$$
\vert [z,a]\vert ^2= \vert \lambda_j-\lambda_{j+1}\vert ^2 (e_j\otimes \overline{e_j} + e_{j+1}\otimes \overline{e_{j+1}})
$$
and then
$$
\Vert\delta_a(z)\Vert_{\Ic}=\Vert za-za\Vert _{\Ic} =\Vert [z,a]\Vert _{\Ic} \le 2\vert \lambda_j-\lambda_{j+1}\vert <\frac{1}{2C}.
$$
Now note that $p_jzp_j=p_{j+1}zp_{j+1}=0$ thus $E(z)=0$. 
Using the remark at the beginning of this proof, we may select $z_0\in\Jc^\sa$ with $\delta_a(z_0)=\delta_a(z)$ and $\Vert z_0\Vert _{\Jc}\le C \Vert \delta_a(z)\Vert _{\Ic}$. 
Then $\delta_a(z-z_0)=0$,  
hence $(\id_\Hc-E)(z-z_0)=0$ by Assertion~\eqref{splits_item1}. 
Therefore 
$$
z=z-E(z)=z_0-E(z_0)
$$
and, using \eqref{contexp_eq1}, we then obtain $\Vert z\Vert _{\Jc}'\le 2\Vert z_0\Vert _{\Jc}'\le 2\Vert z_0\Vert _{\Jc}$. 
We have seen above that $\Vert z\Vert _{\Jc}'\ge 1$, hence
$$
1\le \Vert z\Vert _{\Jc}'\le 2\Vert z_0\Vert _{\Jc}\le 2C\Vert \delta_a(z)\Vert _{\Ic}<1,
$$
a contradiction that concludes the proof. 
\end{proof}

In connection with Lemma~\ref{splits}, we note that for the case of the maximal ideal of a given symmetric norm, the equivalence between finite rank and closed range of $\delta$ was proved before in \cite[Lemma 3.2]{ChiDIyL13}.

Now we extend the previous result to the orbit of \textit{any}  
normal bounded operator. 
We first study the case of self-adjoint operators in Theorem~\ref{noncompact} following ideas in \cite[Theorem 4.3]{AL10}, 
and then the general normal operators in Theorem~\ref{au_th}.
Regarding the differentiable structure of the isotropy group and the orbit of a non-compact operator, see Subsection~\ref{dualpairs}.

\begin{theorem}\label{noncompact}
	Let $\Jc$ be a symmetrically normed ideal. 
	For $a\in \Bc(\Hc)^{\sa}$, consider 
	the map $\pi_{*\1}:\ie\Jc^{\sa}\to \Jc\subseteq\Bc(\Hc)$ defined as before. 
	If the range of $\pi_{*\1}$ is closed in either $\Bc(\Hc)$ or $\Jc$, then $\spec(a)$ is finite. 
\end{theorem}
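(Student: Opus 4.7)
The plan is to adapt the rank-two commutator test of Lemma~\ref{splits}(ii) to the setting where $a$ need not be diagonalizable, replacing eigenprojections of $a$ by spectral projections onto small intervals and using a coarse three-piece conditional expectation in place of the full diagonal one. Assume by contradiction that $\spec(a)$ is infinite; being compact in $\RR$ it has an accumulation point, so for any preassigned $\delta>0$ we can select distinct points $\lambda_1,\lambda_2\in\spec(a)$ with $|\lambda_1-\lambda_2|<\delta$ together with disjoint closed intervals $I_k\ni\lambda_k$ of diameter less than~$\delta$. The spectral projections $q_k:=\chi_{I_k}(a)$ are then nonzero, mutually orthogonal, and satisfy $\|(a-\lambda_k)q_k\|\le\delta$, and for any unit vectors $\xi_k\in\Ran q_k$ one gets $a\xi_k=\lambda_k\xi_k+\eta_k$ with $\eta_k\in\Ran q_k$ and $\|\eta_k\|\le\delta$.

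Form the rank-two self-adjoint test element $z:=\xi_1\otimes\overline{\xi_2}+\xi_2\otimes\overline{\xi_1}\in\Fc(\Hc)^{\sa}\subseteq\Jc^{\sa}$. Exactly as in Lemma~\ref{splits}(ii), the identity $|z|^2=\xi_1\otimes\overline{\xi_1}+\xi_2\otimes\overline{\xi_2}$ yields the lower bound $\|z\|_{\Jc}'\ge 1$. Expanding $\delta_a(z)=[z,a]$ via $a\xi_k=\lambda_k\xi_k+\eta_k$ produces a principal term $(\lambda_2-\lambda_1)(\xi_1\otimes\overline{\xi_2}-\xi_2\otimes\overline{\xi_1})$ plus four rank-one error terms of the form $\pm\xi_k\otimes\overline{\eta_{k'}}$ or $\pm\eta_k\otimes\overline{\xi_{k'}}$; since each rank-one operator $\xi\otimes\overline{\eta}$ has $\Jc$-norm equal to $\|\xi\|\|\eta\|$ and $\|\cdot\|_{\Jc}=\|\cdot\|_{\Jc}'$ on finite-rank operators, one obtains both $\|\delta_a(z)\|\le 5\delta$ and $\|\delta_a(z)\|_{\Jc}\le 6\delta$.

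The closed range hypothesis lets us invoke the open mapping theorem: viewing $\pi_{*\1}$ as a continuous linear map from $\ie\Jc^{\sa}$ into whichever of $\Jc$ or $\Bc(\Hc)$ was assumed to contain its closed image, we obtain a constant $C>0$ and a preimage $z_0\in\Jc^{\sa}$ with $\delta_a(z_0)=\delta_a(z)$ and $\|z_0\|_{\Jc}\le C\|\delta_a(z)\|_\star$, where $\|\cdot\|_\star$ is the relevant norm. Introduce the finite conditional expectation $E(x):=q_0xq_0+q_1xq_1+q_2xq_2$ with $q_0:=\1-q_1-q_2$; by Remark~\ref{contexp} it preserves $\Jc$ with $\|\cdot\|_{\Jc}'$-norm one, satisfies $E(z)=0$ by direct inspection (for each $i$ at most one of $q_i\xi_1,q_i\xi_2$ is nonzero), and has $\Ran E=\{q_0,q_1,q_2\}'\supseteq\{a\}'=\Ker\delta_a$ because every element of $\{a\}'$ commutes with every bounded Borel function of $a$. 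Hence $z-z_0\in\Ker\delta_a\subseteq\Ran E$ forces $z=z_0-E(z_0)$, yielding
$$
1\le\|z\|_{\Jc}'\le 2\|z_0\|_{\Jc}\le 2C\|\delta_a(z)\|_\star\le 12C\delta,
$$
a contradiction as soon as $\delta<1/(12C)$.

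The main technical obstacle relative to Lemma~\ref{splits}(ii) is the loss of genuine eigenvectors: one must substitute spectral-projection ``approximate eigenvectors'' and uniformly control the resulting $\eta_k$-error terms of $[z,a]$ in the $\Jc$-norm. A secondary point is that the coarse three-piece expectation $E$—rather than a full diagonal one—must still both annihilate $z$ and contain $\Ker\delta_a$ in its range; both facts rest on the spectral theorem and the Borel functional calculus for~$a$.
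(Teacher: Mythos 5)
Your proof is correct, and it takes a genuinely different route from the paper's. The paper decomposes $\Hc=\Hc_c\oplus\Hc_p$ into the parts where $a$ has continuous versus pure-point spectrum, shows that closedness of the range of $\delta_a$ passes to the two diagonal blocks, eliminates $\Hc_c$ by invoking Fialkow's result \cite{Fi79} that a bounded-below $\delta_{a_c}$ on $\Kc(\Hc_c)$ forces $\sigma_l(a_c)\cap\sigma_r(a_c)=\emptyset$, and then reduces to a diagonalizable nuclear restriction $a_0$ of $a$ to which Lemma~\ref{splits}\eqref{splits_item2} applies. You instead run the rank-two commutator test of Lemma~\ref{splits}\eqref{splits_item2} directly on $a$, substituting approximate eigenvectors (unit vectors $\xi_k$ in ranges of spectral projections $q_k=\chi_{I_k}(a)$ onto small disjoint intervals) for genuine eigenvectors, and a coarse three-block conditional expectation $E(x)=q_0xq_0+q_1xq_1+q_2xq_2$ for the full diagonal one of Remark~\ref{conde}. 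The price is the control of the four rank-one error terms coming from $a\xi_k=\lambda_k\xi_k+\eta_k$ with $\|\eta_k\|\le\delta$; the payoff is a self-contained argument that bypasses the decomposition into spectral types and the Fialkow citation entirely. Your estimates ($|z|$ being a rank-two projection gives $\|z\|'_{\Jc}\ge1$; the expansion of $[z,a]$ gives $\|\delta_a(z)\|_{\Jc}\le6\delta$; $E(z)=0$; and $\Ran E\supseteq\{a\}'=\Ker\delta_a$ via Borel functional calculus) all check out and yield the contradiction $1\le\|z\|'_{\Jc}\le2\|z_0\|_{\Jc}\le12C\delta$ for $\delta<1/(12C)$.

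Two details worth making explicit, both routine: first, arrange that each $\lambda_k$ lies in the \emph{interior} of $I_k$ --- this is what guarantees $q_k\ne0$ from $\lambda_k\in\spec(a)$, since an endpoint of $I_k$ need not carry spectral mass. Second, your test element $z$ is self-adjoint while $\pi_{*\1}$ has domain $\ie\Jc^{\sa}$, so strictly one should multiply by $\ie$ before invoking the open mapping theorem (to land in the stated domain and range) and multiply back by $-\ie$ to recover $z_0\in\Jc^{\sa}$ with $\delta_a(z_0)=\delta_a(z)$; the same silent identification occurs in the paper's own proof of Lemma~\ref{splits}.
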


\begin{proof}
	For any $x\in \Bc(\Hc)$ we use the above notation $\delta_x=-[x,\cdot]$. 
	As noted before $\pi_{*\1}=\delta_a\vert _{\ie\Jc^{\sa}}$ or its restriction to (the skew-adjoint part) of $\Jc$ and $\Kc(\Hc)$, respectively. 
	Since $a=a^*$, hence both $\Bc(\Hc)^\sa$ and $\ie\Bc(\Hc)^\sa$ are invariant under $\ie\delta_a$, it is easily seen that the range of 
	$\delta_a\vert_{\Jc^\sa}$ is closed in $\Bc(\Hc)$ or $\Jc$ if and only if the range of $\delta_a\vert_\Jc$ is closed in $\Bc(\Hc)$ or $\Jc$, respectively.  
	
	Since $a\in \Bc(\Hc)^{\sa}$, 
	the Hilbert space $\Hc$ can be decomposed in two orthogonal subspaces $\Hc=\Hc_c\oplus\Hc_{p}$ which reduce $a$, 
	where $\Hc_{p}$ is the closed linear subspace spanned by the eigenvectors of~$a$. 
	Then the operator $a_c:=a\vert _{\Hc_c}\in \Bc(\Hc_c)$ has continuous spectrum (i.e., no eigenvalues), 
	while the spectrum of $a_{p}:=a\vert _{\Hc_{p}}$ has a dense subset of eigenvalues. 
	If the range of $\delta_a$ is closed in $\Bc(\Hc)$, we claim that both $\delta_{a_c}$ and $\delta_{a_{p}}$ have closed range in~$\Bc(\Hc)$. 
	Suppose $x_n\in\Bc(\Hc)$ is such that $\delta_{a_c}(x_n)\to y$, then $y_n=x_n\oplus 0 \in \Bc(\Hc)$ satisfies
	$$
	\delta_a(y_n)=\delta_{a_c}(x_n)\oplus 0\to y\oplus 0
	$$
	in $\Bc(\Hc)$, and thus $y\oplus 0=\delta_a(x)$. If one writes this equality in matrix form (in terms of the decomposition $\Hc=\Hc_c\oplus\Hc_{p}$), one has
	$$
	\left( \begin{array}{ll} y & 0 \\ 0 & 0 \end{array} \right) = 
	\left( \begin{array}{ll} x_{11}a_c-a_c x_{11} & x_{12}a_{p}-a_c x_{12} \\ x_{21}a_c-a_{p}x_{21} & x_{22}a_{p}-a_{p}x_{22} \end{array} \right),
	$$ 
	and therefore $y=\delta_{a_c}(x_{11})$. 
	Analogously one proves that the range of $\delta_{a_{p}}$ is closed.
	
	We now show  that $\Hc_c=\{0\}$: note that $\delta_{a_c}:\Kc(\Hc)\to \Kc(\Hc)$ has trivial kernel. 
	Indeed, if $x\ne 0$ is a compact operator commuting with $a_c$, since also $x+x^*$ commutes with $a_c$, by the spectral decomposition of compact self-adjoint operators, one can find a non-trivial (finite rank) spectral projection of $x+x^*$, which commutes with $a_c$, and thus $a_c$ would have an eigenvalue, leading to a contradiction. 
	It follows that $\delta_{a_c}:\Kc(\Hc)\to \Kc(\Hc)$ is bounded from below. 
	Thus, by \cite[Theorem 3.2]{Fi79}, one would have that $\sigma_l(a_c)\cap\sigma_r(a_c)=\emptyset$. 
	Since for 
	normal  operators, right and left spectra coincide with the spectrum, this implies that the spectrum of $a_c$ is empty, and therefore $\Hc_c=\{0\}$.
	
	It follows that the spectrum of $a$ has a dense subset of eigenvalues. Suppose that there are infinitely many eigenvalues. 
	By adding a multiple of the identity to $a$ (a change that does not affect $\delta_a$), we may suppose that $0$ is an accumulation point of the set of eigenvalues of $a$.
	From this infinite set one can select a sequence of (different) eigenvalues $\{\lambda_n: n\ge 1\}$ which are summable. 
	For each $n\ge 1$ pick a unit eigenvector $e_n$,  consider $\Hc_0$ the closed linear span of these eigenvectors, and denote $a_0=a\vert _{\Hc_0}\in \Bc(\Hc_0)$. 
	Note that $a_0$ is a compact, in fact, it is a nuclear operator. 
	It is apparent that since $\delta_a:\Kc(\Hc) \to \Kc(\Hc)$ has closed range, then  $\delta_{a_0}:\Kc(\Hc_0)\to \Kc(\Hc_0)$ also has closed range and by Lemma~\ref{splits}, it must have finite spectrum, a contradiction. 
	
	The proof for the case of the closure in $\Jc$ is similar and therefore omitted.
\end{proof}


In order to obtain a version of Theorem~\ref{noncompact} for non-self-adjoint operators $a\in\Bc(\Hc)^\nor$ (see Theorem~\ref{au_th} below), we now adapt some ideas from \cite{Vo76} to a suitable setting of operator ideals. 
A pair of symmetrically normed ideals $(\Ic,\Ic_1)$ will be hereafter called a \emph{dual pair of ideals} if $\Ic\Ic_1\subseteq\Sg_1(\Hc)$ and the bilinear functional 
\begin{equation}\label{duality}
\Ic\times\Ic_1\to\CC,\quad (x,y)\mapsto \Tr(xy)
\end{equation}
gives rise to an isometric isomorphism of the Banach space $\Ic$ onto the topological dual $\Ic_1^*$ of the Banach space $\Ic_1$. 
Such a dual pair is an admissible pair in the sense of \cite[Def. 9.22]{B06}  if one additionally assumes that $\Ic_1\subseteq\Ic$ and the Banach space $\Ic$ is reflexive. 
In this setting, we need the following slight variation of a \cite[Lemma 2.6]{Vo76}.

\begin{lemma}\label{au}
Let $(\Ic,\Ic_1)$ be a dual pair of symmetrically normed ideals in $\Bc(\Hc)$  and 
$a\in\Ic^\nor$.  
If $a'\in \Ic\cap \overline{\Oc_{\Bc(\Hc)}(a)}^{\Vert\cdot\Vert}$ 
then, 
for any symmetrically normed ideal $\Jc\subseteq\Bc(\Hc)$,  
the operator $\delta_a\vert_{\Jc^\sa}\colon\Jc^\sa\to\Ic$ has closed range if and only if the operator $\delta_{a'}\vert_{\Jc^\sa}\colon\Jc^\sa\to\Ic$ has closed range. 
\end{lemma}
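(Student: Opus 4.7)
The plan is to reduce the biconditional to a single implication by symmetry, and then to transfer closedness of range from $\delta_a$ to $\delta_{a'}$ through a uniform estimate followed by a weak-$*$ limiting argument, with the dual pair structure $(\Ic,\Ic_1)$ entering decisively in the final step.

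First I would observe that the relation $a'\in\overline{\Oc_{\Bc(\Hc)}(a)}^{\Vert\cdot\Vert}$ is symmetric in $a$ and $a'$. Writing $a'=\lim_n u_n a u_n^*$ in operator norm with $u_n\in\U(\Hc)$, conjugation by $u_n^*$ gives $a=\lim_n u_n^* a' u_n$ in operator norm, so also $a\in\overline{\Oc_{\Bc(\Hc)}(a')}^{\Vert\cdot\Vert}$. Hence it suffices to prove one implication, say that closed range of $\delta_a\vert_{\Jc^\sa}$ entails closed range of $\delta_{a'}\vert_{\Jc^\sa}$.

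Assuming $\Ran\delta_a\vert_{\Jc^\sa}$ is closed in $\Ic$, the open mapping theorem (as in the proof of Lemma~\ref{splits}) yields $C>0$ such that every $y\in\Ran\delta_a\vert_{\Jc^\sa}$ admits a preimage $x\in\Jc^\sa$ with $\Vert x\Vert_{\Jc}\le C\Vert y\Vert_{\Ic}$. Since conjugation by any unitary $u\in\U(\Hc)$ is an isometric $*$-automorphism of both $\Jc$ and $\Ic$ satisfying the intertwining identity $\delta_{uau^*}(uxu^*)=u\delta_a(x)u^*$, the same constant $C$ controls $\delta_{a_n}$ uniformly in $n$, where $a_n:=u_n a u_n^*$. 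Given $z\in\overline{\Ran\delta_{a'}\vert_{\Jc^\sa}}^{\Vert\cdot\Vert_{\Ic}}$, pick $w_m\in\Jc^\sa$ with $\delta_{a'}(w_m)\to z$ in $\Ic$. For each fixed $m$, the commutator $[w_m,a'-a_n]$ tends to $0$ in $\Ic$-norm as $n\to\infty$: this follows by approximating the compact operator $w_m$ in operator norm by finite-rank $f$, for which $\Vert f(a'-a_n)\Vert_{\Ic}$ and $\Vert(a'-a_n)f\Vert_{\Ic}$ tend to $0$ via a rank-one decomposition together with $(a'-a_n)^*\eta\to 0$ in $\Hc$ for every $\eta$, while $\Vert(w_m-f)(a'-a_n)\Vert_{\Ic}\le\Vert w_m-f\Vert\cdot 2\Vert a\Vert_{\Ic}$ is made small by choosing $f$ close to $w_m$ in operator norm. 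Choosing $n=n(m)$ large enough that $\Vert\delta_{a_{n(m)}}(w_m)-z\Vert_{\Ic}<2/m$ and applying the uniform estimate produces $x_m\in\Jc^\sa$ with $\delta_{a_{n(m)}}(x_m)=\delta_{a_{n(m)}}(w_m)$ and $\Vert x_m\Vert_{\Jc}\le C(\Vert z\Vert_{\Ic}+2/m)$.

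The bounded sequence $(x_m)$ in $\Jc$ is also bounded in operator norm, so I would extract a weak-$*$ subsequential limit $x\in\Bc(\Hc)=\Sg_1(\Hc)^*$ using Banach--Alaoglu. Weak-$*$ continuity of left and right multiplication on $\Bc(\Hc)$, combined with the operator-norm convergence $a_{n(m)}\to a'$ and the weak-$*$ convergence $x_m\to x$, forces $[x_m,a_{n(m)}]\to[x,a']$ weak-$*$ in $\Bc(\Hc)$. Since $[x_m,a_{n(m)}]=\delta_{a_{n(m)}}(w_m)\to z$ in $\Ic$, and hence weak-$*$ in $\Bc(\Hc)$, we obtain $[x,a']=z$ as bounded operators. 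The hard part will be to show that the weak-$*$ limit $x$ actually lies in $\Jc^\sa$, so that $z=\delta_{a'}(x)$ exhibits $z$ as an element of $\Ran\delta_{a'}\vert_{\Jc^\sa}$. This is precisely where the dual pair structure $(\Ic,\Ic_1)$ is indispensable, and where one follows the strategy of \cite[Lemma 2.6]{Vo76}: using $\Ic=\Ic_1^*$, membership in $\Ran\delta_{a'}\vert_{\Jc^\sa}$ is encoded through trace pairings against elements $y\in\Ic_1$ via the cyclicity identity $\Tr([x,a']y)=\Tr(x[a',y])$, where $[a',y]\in\Sg_1(\Hc)$ by the dual-pair relation $\Ic\Ic_1\subseteq\Sg_1(\Hc)$. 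These pairings pass to the weak-$*$ limit, and the uniform $\Jc$-bound on the $x_m$, transferred through the pairing, constrains the limit $x$ to $\Jc^\sa$. Checking the compatibility of all trace pairings and limit transfers in the general symmetrically normed setting, possibly through a diagonal extraction to ensure simultaneous convergence, is the most delicate point of the proof.
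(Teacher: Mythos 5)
Your symmetry reduction, the uniform open-mapping constant transported to the conjugates $a_n=u_nau_n^*$ via $\delta_{u a u^*}(uxu^*)=u\delta_a(x)u^*$, and the careful verification that $\delta_{a_{n}}(w_m)\to\delta_{a'}(w_m)$ in $\Vert\cdot\Vert_{\Ic}$ for fixed $m$ (using compactness of $w_m$ and finite-rank approximation) are all sound, and up to that point you run parallel to the paper. The genuine gap is precisely the step you defer as ``the hard part'': showing that the weak-$*$ limit $x$ of the sequence $(x_m)$ lies in $\Jc^{\sa}$. Nothing in your setup can deliver this, and the mechanism you sketch cannot work: the only bound you have on $(x_m)$ is a $\Vert\cdot\Vert_{\Jc}$-bound, while $\Jc$ is an arbitrary symmetrically normed ideal that plays no role in the dual pair $(\Ic,\Ic_1)$; pairing against elements of $\Ic_1$ detects weak-$*$ convergence in $\Bc(\Hc)$ but carries no information about membership in $\Jc$. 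Indeed the implication you need is false in general: for $\Jc=\Sg_\infty(\Hc)=\Kc(\Hc)$ the finite-rank projections $P_n\nearrow\1$ form a sequence bounded in $\Vert\cdot\Vert_{\Jc}$ whose weak-$*$ limit $\1$ is not in $\Jc$. You also have no $\Ic$-bound on $(x_m)$, so you cannot even place the limit in $\Ic$. Hence the plan of exhibiting an arbitrary $z$ in the closure of $\Ran(\delta_{a'}\vert_{\Jc^{\sa}})$ as $\delta_{a'}(x)$ with $x\in\Jc^{\sa}$ cannot be completed along these lines.

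The paper avoids exactly this obstruction by never seeking a preimage in $\Jc^{\sa}$. Starting from an arbitrary $x\in\Jc^{\sa}$, it solves $\delta_a(x_k)=\delta_a(u_k^*xu_k)$ with control on $x_k$, conjugates the solutions back to $u_kx_ku_k^*$, bounds these in the $\Ic$-norm by $C\Vert\delta_{a'}(x)\Vert_{\Ic}$ in the limit, and then invokes the weak-$*$ compactness of the unit ball of $\Ic\simeq\Ic_1^*$ --- this is the single place where the dual pair hypothesis enters --- to extract $y\in\Ic^{\sa}$ with $\Vert y\Vert_{\Ic}\le C\Vert\delta_{a'}(x)\Vert_{\Ic}$ and, via weak operator convergence (using $\Fc(\Hc)\subseteq\Ic_1$), $\delta_{a'}(y)=\delta_{a'}(x)$. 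The conclusion thus rests on an a priori estimate with controlled preimages in $\Ic^{\sa}$, which is also the form in which the statement is consumed later (in Theorem~\ref{au_th} via Lemma~\ref{splits}). To repair your argument you should aim at this estimate instead: keep your approximation scheme, but arrange the bound on the conjugated solutions in the $\Ic$-norm and take the weak-$*$ limit inside $\Ic=\Ic_1^*$ rather than in $\Bc(\Hc)=\Sg_1(\Hc)^*$; your final membership claim, as stated, is stronger than what the paper proves and is not attainable by the pairing argument you propose.
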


\begin{proof}
The hypothesis on the operators $a$ and $a'$ is equivalent to the pair of conditions $a,a'\in\Ic^\nor$ and $\overline{\Oc_{\Bc(\Hc)}(a)}^{\Vert\cdot\Vert}=\overline{\Oc_{\Bc(\Hc)}(a')}^{\Vert\cdot\Vert}$, 
in which $a$ and $a'$ play symmetric roles. 
It therefore suffices to prove that if $\delta_a\vert_{\Jc^\sa}\colon\Jc^\sa\to\Ic$ has closed range, then the operator $\delta_{a'}\vert_{\Jc^\sa}\colon\Jc^\sa\to\Ic$ has closed range, too. 

By hypothesis, there exists $C>0$ such that for every $x\in\Jc^\sa$ there exists $x_0\in\Jc^\sa$ with $\Vert x_0\Vert_\Jc\le C\Vert\delta_a(x)\Vert_\Ic$ 
and $\delta_a(x)=\delta_a(x_0)$.   
Also, there exists a sequence of unitary operators $u_k\in\Bc(\Hc)$ with $\lim\limits_{k\to\infty}\Vert u_kau_k^*-a'\Vert=0$.
Then, for arbitrary $x\in\Jc^\sa$ and $k\ge 1$, there exists $x_k\in\Jc^\sa$ 
with  $\Vert x_k\Vert_\Jc\le C\Vert\delta_a(u_k^*xu_k)\Vert_\Ic$ 
and $\delta_a(u_k^*xu_k)=\delta_a(x_k)$. 

On the other hand, for every unitary operator $u\in\Bc(\Hc)$  and every $x\in\Ic$ we have 
$$\delta_a(u^*xu)=[u^*xu,a]=u^*[x,uau^*]u=u^*\delta_{uau^*}(x)u$$
while 
$$\Vert \delta_a(x)\Vert_\Ic
=\Vert xa-ax\Vert_\Ic
\le 2\Vert a\Vert \cdot \Vert x\Vert_\Ic$$
hence 
$$\lim\limits_{k\to\infty}\Vert \delta_{u_kau_k^*}(x)-\delta_{a'}(x)\Vert_\Ic=0 
\text{ for all }x\in\Ic.$$
Furthermore, 
\begin{align*}
\limsup_{k\to\infty}\Vert u_kx_ku_k^*\Vert_\Ic & = \limsup_{k\to\infty}\Vert x_k\Vert_\Jc \le C\limsup_{k\to\infty}\Vert\delta_a(u_k^*xu_k)\Vert_\Ic \\
&=C\limsup_{k\to\infty}\Vert\delta_{u_kau_k^*}(x)\Vert_\Ic =C\Vert \delta_{a'}(x)\Vert_\Ic.
\end{align*}
Since the unit ball of $\Ic\simeq\Ic_1^*$ is weakly compact, it follows in particular that, selecting a suitable subsequence, we may assume that there exists $y\in\Ic^\sa$ with 
\begin{equation}\label{au_proof_eq1}
\Vert y\Vert_\Ic\le C\Vert \delta_{a'}(x)\Vert_\Ic
\end{equation} 
and $\lim\limits_{k\to\infty}u_kx_ku_k^*=y$ in the 
dual weak topology of~$\Ic$. 
Since $\Fc(\Hc)\subseteq\Ic_1$, we have in particular 
$\lim\limits_{k\to\infty}u_kx_ku_k^*=y$ in the weak operator topology of~$\Bc(\Hc)$. 
This implies 
\begin{align*}
\delta_{a'}(y)-\delta_{a'}(x)
=&\lim\limits_{k\to\infty}[u_kx_ku_k^*,a']-[x,a'] \\ =&\lim\limits_{k\to\infty}u_k[x_k,u_k^*a'u_k]u_k^*-[x,a'] \\
=&\lim\limits_{k\to\infty}u_k[x_k,u_k^*a'u_k-a]u_k^*
+\lim\limits_{k\to\infty}u_k[x_k,a]u_k^*
-[x,a'] \\
=&\lim\limits_{k\to\infty}u_k[x_k,u_k^*a'u_k-a]u_k^*
+\lim\limits_{k\to\infty}u_k[u_k^*xu_k,a]u_k^*
-[x,a'] 
\\
=&\lim\limits_{k\to\infty}u_k[x_k,u_k^*a'u_k-a]u_k^*
+\lim\limits_{k\to\infty}[x,u_kau_k^*]
-[x,a'] \\
=&0
\end{align*}
hence $\delta_{a'}(y)=\delta_{a'}(x)$. 
Taking into account \eqref{au_proof_eq1} and the fact that $x\in\Jc^\sa$ is arbitrary, it follows that the  operator $\delta_{a'}\vert_{\Jc^\sa}\colon\Jc^\sa\to\Ic$ has closed range, 
and we are done. 
\end{proof}

\begin{theorem}\label{au_th}
	Let $(\Ic,\Ic_1)$ be a dual pair of symmetrically normed ideals in $\Bc(\Hc)$  and $a\in\Ic^\nor$.   
	Then, for any symmetrically normed ideal $\Jc\subseteq\Bc(\Hc)$,  
	if the operator $\delta_a\vert_{\Jc^\sa}\colon\Jc^\sa\to\Ic$ has closed range then $\spec(a)$ is finite. 
\end{theorem}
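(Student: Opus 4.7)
The plan is to reduce Theorem~\ref{au_th} to the diagonalizable case already handled by Lemma~\ref{splits}\eqref{splits_item2}, using Lemma~\ref{au} as the vehicle for replacing $a$ by a diagonalizable operator in the norm-closure of its unitary orbit. Since every symmetrically normed ideal of $\Bc(\Hc)$ that contains a non-compact operator must coincide with $\Bc(\Hc)$, I would split the argument into two cases according to whether $\Ic\subseteq\Kc(\Hc)$ or $\Ic=\Bc(\Hc)$.

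If $\Ic\subseteq\Kc(\Hc)$, then $a\in\Ic^\nor$ is automatically compact normal, hence diagonalizable by the spectral theorem for compact normal operators. I would then apply Lemma~\ref{splits}\eqref{splits_item2} directly to $a$: the hypothesis that $\delta_a\vert_{\Jc^\sa}\colon\Jc^\sa\to\Ic$ has closed range is equivalent, via the real-linear isometry $x\mapsto\ie x$, to closed range of $\delta_a\vert_{\ie\Jc^\sa}\colon\ie\Jc^\sa\to\Ic$, which is exactly what that lemma requires, and the conclusion is that $\spec(a)$ is finite.

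If instead $\Ic=\Bc(\Hc)$, then $a$ may fail to be compact. I would argue by contradiction, assuming $\spec(a)$ infinite. Using a Weyl--von Neumann--Berg--Voiculescu-type absorption theorem (in the spirit of~\cite{Vo76}), I would construct a \emph{diagonalizable} normal operator $a'\in\overline{\Oc_{\Bc(\Hc)}(a)}^{\Vert\cdot\Vert}$ whose spectrum contains a countable dense subset of $\spec(a)$; concretely, by partitioning the spectral measure of~$a$ into small Borel pieces, picking one point in each piece as an eigenvalue, and assembling the corresponding diagonal operator inside the orbit closure. Since $a'\in\Bc(\Hc)=\Ic$ automatically, Lemma~\ref{au} transfers the closed-range hypothesis from $\delta_a\vert_{\Jc^\sa}$ to $\delta_{a'}\vert_{\Jc^\sa}$. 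Then Lemma~\ref{splits}\eqref{splits_item2} applied to the diagonalizable $a'$ forces $\spec(a')$ to be finite, contradicting the infinite spectrum dictated by the construction.

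The main obstacle I anticipate is the construction of $a'$ in the second case. One needs $a'$ to belong genuinely to the uniform closure of the unitary orbit of~$a$ (so that Lemma~\ref{au} applies) while simultaneously retaining enough of the spectrum of~$a$ to keep $\spec(a')$ infinite; this is precisely the content of Voiculescu-type absorption theorems and requires some care regarding essential spectrum and spectral multiplicities. Once $a'$ is in hand, the remainder is a clean two-step reduction through Lemmas~\ref{au} and~\ref{splits}.
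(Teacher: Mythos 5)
Your proof is correct and follows essentially the same route as the paper: the paper likewise combines Lemma~\ref{au}, the existence of a diagonalizable operator $a'$ in $\overline{\Oc_{\Bc(\Hc)}(a)}^{\Vert\cdot\Vert}$ supplied by the Weyl--von Neumann theorem (\cite[Prop.~2.1]{Vo76}), and Lemma~\ref{splits}\eqref{splits_item2}. Two small remarks: your preliminary case split is harmless but not needed (the paper's single argument also covers $\Ic\subseteq\Kc(\Hc)$, though your direct appeal to Lemma~\ref{splits}\eqref{splits_item2} there is a valid shortcut), and since the spectrum is constant on the operator-norm closure of the unitary orbit of a normal operator, $\spec(a')=\spec(a)$ is automatic---so rather than the literal ``one eigenvalue per partition piece'' recipe (which, taken at face value, produces a finite-spectrum diagonal operator lying outside the orbit closure), it is cleanest to quote \cite[Prop.~2.1]{Vo76} directly, exactly as the paper does.
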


\begin{proof}
By Lemma~\ref{au}, the operator $\delta_{a'}\vert_{\Jc^\sa}\colon\Jc^\sa\to\Ic$ has closed range for every operator $a'$ in the operator norm closure $\overline{\Oc_{\Bc(\Hc)}(a)}^{\Vert\cdot\Vert}$ of the unitary orbit~$\Oc_{\Bc(\Hc)}(a)$. 
Since $a\in\Bc(\Hc)^\nor$, we have $\overline{\Oc_{\Bc(\Hc)}(a)}^{\Vert\cdot\Vert}\subseteq \Bc(\Hc)^\nor$ 
and $\spec(a')=\spec(a)$ for every $a'\in\overline{\Oc_{\Bc(\Hc)}(a)}^{\Vert\cdot\Vert}$. 
On the other hand, by the Weyl-von Neumann theorem as 
obtained in the special case of \cite[Prop. 2.1]{Vo76} for normal operators, 
there exists $a'\in \overline{\Oc_{\Bc(\Hc)}(a)}^{\Vert\cdot\Vert}$ diagonalizable. 
For such a diagonalizable operator $a'$ its spectrum is finite by Lemma~\ref{splits}\eqref{splits_item2}, hence $\spec(a)$ is finite as well. 
\end{proof}

Recall that when $a\in\Bc(\Hc)^\nor$ and $\spec(a)$ is finite, its collection of spectral projections is finite, thus its corresponding conditional expectation $E$ preserves any symmetrically normed ideal (Remark~\ref{conde}). 
Moreover, if $\Ac\subseteq\Bc(\Hc)$ is a $C^*$-algebra with $a\in\Ac^\nor$, then the spectral projections of $a$ belong to $\Ac$ hence the conditional expectation $E\colon\Bc(\Hc)\to\{a\}'$ satisfies $E(\Ac)=\Ac\cap\{a\}'$. 
If additionally $\1\in\Ac$, then we denote by $\U(\Ac):=\U(\Hc)\cap\Ac$ the unitary group of~$\Ac$, and also $\Oc_\Ac(a):=\{uau^*\mid u\in\U(\Ac)\}\subseteq\Ac$ and $\pi_\Ac^a\colon\U(\Ac)\to\Oc_\Ac(a)$, 
$u\mapsto uau^*$.

\begin{lemma}\label{finite}
If $a\in \Bc(\Hc)^{\nor}$ with finite spectrum,  
with its corresponding conditional expectation 
$E\colon\Bc(\Hc)\to\{a\}'$
then the following assertions hold true: 
\begin{enumerate}[{\rm(i)}]
	\item\label{finite_item1} 
	If $\Ac\subseteq\Bc(\Hc)$ is a $C^*$-algebra with $a\in\Ac$, then 
	the operator
	$\delta_a\vert_{\ie\Ac^\sa}\colon\ie\Ac^\sa\to\Ac$ has split closed  range and $\ie\Ac^\sa=\Ker(\delta_a\vert_{\ie\Ac^\sa})
	\dotplus\Ran((\id_{\Ac}-E)\vert_{\ie\Ac^\sa})$.  
	\item\label{finite_item2} 
	If $a\in\Fc(\Hc)^{\nor}$ and $\Ic,\Jc\subseteq\Bc(\Hc)$ are symmetrically normed ideals, then the operator
	$
	\delta_a\vert_{\ie\Jc^\sa}\colon\ie\Jc^\sa\to\Ic$ has split closed  range and $\ie\Jc^\sa=\Ker(\delta_a\vert_{\ie\Jc^\sa})
	\dotplus\Ran((\id_{\Bc(\Hc)}-E)\vert_{\ie\Jc^\sa})$.  
	Moreover, 
	$\Ran \delta_a=\Ker E\subseteq\Fc(\Hc)\subseteq \Ic\cap\Jc$. 
\end{enumerate}
\end{lemma}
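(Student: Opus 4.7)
The plan is to exploit the finite spectral decomposition $a=\sum_{k=1}^{n}\lambda_k p_k$ (with distinct $\lambda_k$ and mutually orthogonal self-adjoint projections summing to $\1$) by introducing a bounded partial inverse of $\delta_a$ modulo $E$:
$$
S\colon \Bc(\Hc)\to\Bc(\Hc),\qquad S(z):=\sum_{j\ne k}\frac{p_jzp_k}{\lambda_k-\lambda_j}.
$$
Being a finite sum of sandwich operations with the projections $p_k$, the map $S$ is continuous on $\Bc(\Hc)$ and preserves every symmetrically normed ideal (Remark~\ref{contexp}). The block identity $\delta_a(x)=\sum_{j,k}(\lambda_k-\lambda_j)p_jxp_k$ yields at once
$$
S\circ\delta_a=\id_{\Bc(\Hc)}-E=\delta_a\circ S,
$$
which forces $\Ran\delta_a=\Ran(\id-E)=\Ker E$ on all of $\Bc(\Hc)$ and will be the engine for everything that follows.

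For the ideal inclusions in (ii), when $a\in\Fc(\Hc)^{\nor}$ at most one spectral projection (that of the eigenvalue~$0$) has infinite rank, so every off-diagonal block $p_jzp_k$ with $j\ne k$ has finite rank; the finite sum then lies in $\Fc(\Hc)$, proving $\Ker E\subseteq\Fc(\Hc)$, and the inclusion $\Fc(\Hc)\subseteq\Ic\cap\Jc$ is the standard fact recalled in Definition~\ref{adm}.

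For the direct-sum decompositions I rely on $E^{2}=E$. In case~(i), the finiteness of $\spec(a)$ places each $p_k$ in the $C^{*}$-algebra $\Ac$ via continuous functional calculus, hence $E$ preserves $\Ac$ and commutes with the involution, so it preserves $\ie\Ac^{\sa}$; in case~(ii) the same holds with $\Jc$ in place of $\Ac$ since the collection of projections is finite (Remark~\ref{contexp}). Combining this with the identity $\Ker(\delta_a\vert_{\ie\Ac^{\sa}})=E(\ie\Ac^{\sa})$ (Lemma~\ref{splits}\eqref{splits_item1}) and the tautology $\id=E+(\id-E)$ on $\ie\Ac^{\sa}$ (respectively $\ie\Jc^{\sa}$) yields the stated direct sum.

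Finally, for the split closed range property, the identity $S\circ\delta_a=\id-E$ makes $\delta_a$ a topological isomorphism from $X_{1}:=(\id-E)(\ie\Ac^{\sa})$ onto its image in $\Ac$, with bounded inverse given by $S$ restricted to this image; hence $\Ran(\delta_a\vert_{\ie\Ac^{\sa}})$ is closed in $\Ac$. To exhibit a closed complement I would define the $\RR$-linear continuous projection
$$
P(z):=\delta_a\Bigl(\tfrac12\bigl(S((\id-E)(z))-(S((\id-E)(z)))^{*}\bigr)\Bigr)
$$
and verify $P^{2}=P$ and $\Ran P=\Ran(\delta_a\vert_{\ie\Ac^{\sa}})$ by a direct block computation; the argument for the ideal situation of~(ii) is identical, with $\Ic,\Jc$ in place of $\Ac$. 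The main technical obstacle is precisely this splitting on the codomain side: when $a$ is merely normal, $\delta_a(\ie\Ac^{\sa})$ is an $\RR$-linear subspace of the off-diagonal part of $\Ac$ that is not invariant under the involution, so the existence of a continuous complement is not automatic, and the explicit formula for $P$ above is what makes its complementation manifest.
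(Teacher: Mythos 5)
Your proposal is correct, and it takes a genuinely different route from the paper to the splitting.

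The paper's proof works through the block entries $y_{ij}=p_iyp_j$: it characterizes $\delta_a(\Bc(\Hc)^\sa)$ by the relations $y_{jj}=0$ and $y_{ij}^*=\alpha_{ji}y_{ji}$ with $\alpha_{ji}=\overline{\lambda_i-\lambda_j}/(\lambda_j-\lambda_i)$, picks square roots $\beta_{ij}\in\TT$, and manufactures the complement as the ``$-1$ eigenspace'' $\Vc_-$ of the involution $y\mapsto\bigl((\beta_{ij}y_{ij})^*\beta_{ji}^{-1}\bigr)_{ij}$ relative to the ``$+1$ eigenspace'' $\Vc_+$. You instead introduce the bounded block inverse
$S(z)=\sum_{j\ne k}(\lambda_k-\lambda_j)^{-1}p_jzp_k$, note $S\delta_a=\id-E=\delta_aS$, and use $S$ both to identify $\Ran\delta_a=\Ker E$ and to write down the continuous $\RR$-linear idempotent $P=\delta_a\circ\tfrac12(\id-{}^*)\circ S$ projecting onto $\delta_a(\ie\Ac^\sa)$ (resp.\ onto $\delta_a(\ie\Jc^\sa)$ inside $\Ic$). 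The two approaches buy roughly the same thing, but yours avoids the arbitrary choice of square roots $\beta_{ij}$ and makes the boundedness on any symmetrically normed ideal immediate from the finite-sum form of $S$; the paper's decomposition is more of an explicit ``real/imaginary'' splitting of the off-diagonal part. Your final observation — that for genuinely normal (non-self-adjoint) $a$ the range $\delta_a(\ie\Ac^\sa)$ is not $*$-invariant, which is exactly what forces one to construct the complement by hand — is precisely the point the paper's $\beta_{ij}$ device is addressing.

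One small caveat: in case (i) you justify $E(\Ac)\subseteq\Ac$ by ``each $p_k\in\Ac$.'' This is clear for the spectral projections of the nonzero eigenvalues, but $p_0=\1-\sum_{k\ge1}p_k$ need not lie in $\Ac$ if $\Ac$ is not unital. The conclusion still holds (every occurrence of $p_0$ in $E$ and $S$ can be rewritten using $\1-\sum_{k\ge1}p_k$ so that the results stay in $\Ac$), and in fact the paper only uses part (i) under the hypothesis $\1\in\Ac$ (Theorem~\ref{finisemb}); still, it is worth flagging since the statement of (i) does not explicitly assume unitality.
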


\begin{proof}
\eqref{finite_item1} 
This can be obtained by a straightforward adaptation of the following proof of  Assertion~\eqref{finite_item2}, hence we skip the details.

\eqref{finite_item2} 
We use the notation from Remark~\ref{conde} again, 
so $\spec(a)\setminus\{0\}=\{\lambda_1,\dots,\lambda_n\}$, $\lambda_0:=0$, 
and $p_j$ is the orthogonal projection onto $\Ker(a-\lambda_j\id_\Hc)$ for $j=0,\dots,n$. 
Since $a\in\Fc(\Hc)$, we have $p_1,\dots,p_n\in\Fc(\Hc)$.  
For every $z\in\Bc(\Hc)$ we have 
\begin{equation}
\label{finite_proof_eq1}
z=\sum\limits_{i,j=0}^np_izp_j
\end{equation} 
and $E(z)=\sum\limits_{j=0}^np_jzp_j$, hence $z\in \Ker E$ if and only if $p_jzp_j=0$ for $j=0,\dots,n$, in particular $p_0zp_0=0$, and then 
 $z\in\Fc(\Hc)$ by~\eqref{finite_proof_eq1}. 
Thus 
$\Ker E\subseteq\Fc(\Hc)\subseteq\Ic\cap\Jc$.   
Moreover, since $E^2=E$, in general we have $\Ker E=\Ran(\id_{\Bc(\Hc)}-E)$. 
We have  $a=\sum\limits_{j=0}^n\lambda_jp_j$ hence, 
for any $z\in\Bc(\Hc)$, 
$$
\delta_a(z)=\sum_{i,j=0}^n (\lambda_i-\lambda_j)p_izp_j\in\Ker E.
$$
On the other hand, if $z\in \Ker E$ and we define 
$$
x:=\sum_{0\le i\ne j\le n} \frac{1}{\lambda_i-\lambda_j}p_i z p_j
\in\Ker E\subseteq \Fc(\Hc)\subseteq \Jc
$$
then it is easy to check that $\delta_a(x)=\sum\limits_{i\ne j}p_izp_j=z$, thus $z\in \Ran\delta_a$.

If we denote $z_{ij}:=p_izp_j$, then the equation $z^*=z$ is equivalent to $z_{ij}^*=z_{ji}$. 
Also, the equation $y=\delta_a(z)$ is equivalent to $y_{ij}=(\lambda_i-\lambda_j)z_{ij}$, which is further equivalent to $z_{ij}=(\lambda_i-\lambda_j)^{-1}y_{ij}$. 
Hence $y\in\delta_a(\Bc(\Hc)^\sa)$ if and only if 
$\overline{\lambda_i-\lambda_j}^{-1}y_{ij}^*=(\lambda_j-\lambda_i)^{-1}y_{ji}$, 
which is further equivalent to $y_{ij}^*=\alpha_{ji}y_{ji}$, 
where $\alpha_{ji}:=\overline{\lambda_i-\lambda_j}/(\lambda_j-\lambda_i)\in\TT$ 
for all $i\ne j$, while $y_{jj}=0$. 
We note that $\alpha_{ji}=\alpha_{ij}$. 
Selecting any $\beta_{ij}=\beta_{ji}\in\TT$ with $\beta_{ij}^2=\alpha_{ij}$, 
we further obtain that  $y\in\delta_a(\Bc(\Hc)^\sa)$ if and only if 
$y_{jj}=0$ and $(\beta_{ij}y_{ij})^*=\beta_{ji}y_{ji}$. 
Therefore, if we define 
$$\Vc_\pm:=\{y\in\Ic\mid (\beta_{ij}y_{ij})^*=\pm\beta_{ji}y_{ji}\text{ if }i\ne j\}$$
then $\Vc_\pm$ is a closed real linear subspace of $\Ic$ and we have the direct sum decomposition 
$$\Ic=
\Vc_+\dotplus\Vc_-$$ 
where 
$\Vc_+=\delta_a(\Bc(\Hc)^\sa)=\delta_a(\Jc^\sa)=\delta_a(\Fc(\Hc)^\sa)$. 

Finally, the kernel of the operator 
$\delta_a\vert_{\ie\Jc^\sa}\colon\ie\Jc^\sa\to\Ic$ 
is complemented since $\Ker(\delta_a\vert_{\ie\Jc^\sa})=\{a\}'\cap \ie\Jc^\sa$ 
hence 
$E\vert_{\ie\Jc^\sa}\colon \ie\Jc^\sa\to\ie\Jc^\sa$ is an idempotent operator 
with $E(\ie\Jc^\sa)=\Ker(\delta_a\vert_{\ie\Jc^\sa})$. 
\end{proof}

\begin{definition}\label{cross_def}
For any symmetrically normed ideals $\Jc,\Ic\subseteq\Bc(\Hc)$ and $a\in\Ic$,
we say that $\pi=\pi_{\Jc,\Ic}^a$ \emph{has local cross-sections}  if for every $b\in \Oc_{\Jc}(a)$ there exist an open neighborhood $V$ in the relative topology of $\Oc_{\Jc}(a)\subseteq\Ic$ and a continuous map $\sigma\colon V\to\U_{\Jc}$ with $\pi\circ\sigma=\id_V$ and $\sigma(b)=\1$.
\end{definition}

\begin{definition}\label{defsubma}
A {\it submanifold} of the Banach space~$\Ic$ is the image of any smooth injective map $f\colon M\to\Ic$, such that the image of the injective tangent map 
$$
T_mf\colon T_mM\to\Ic
$$
is a closed real subspace of~$\Ic$. 
We say that the manifold {\it splits} if $T_mf(T_mM)$ admits a direct complement. 
We say that the manifold is {\it embedded} if $f$ is an homeomorphism onto its image.

\begin{remark}\label{raeb}
In Definition~\ref{cross_def}, 
if $\pi$ is open and $\delta=\pi_{*\1}\colon \ie\Jc^\sa\to\Ic$ splits in range and kernel, the orbit $\Oc_{\Jc}(a)$ is a smooth (in fact, real analytic) submanifold of  $\Ic$. 
See for instance \cite[Prop. 1.5]{rae77} or \cite[Lemma 3.3.6]{La19}, 
and also Lemma~\ref{L336_enhanced}. 
\end{remark}

\begin{lemma}\label{L336_enhanced}
	Let $G$ be a Banach-Lie group with its Lie algebra~$\gg$, $\Xc$ a real Banach space, and $\pi\colon G\to\GL(\Xc)$ a morphism of Banach-Lie groups. 
	For fixed $x\in\Xc$ we define 
	$\pi^x\colon G\to\Xc$, $\pi^x(g):=\pi(g)x$. 
	Let  $\Oc(x):=\pi^x(G)$,  regarded as a topological subspace of~$\Xc$,
	and assume that the following conditions are satisfied:
	\begin{enumerate}
		\item[{\rm 1.}]\label{L336_enhanced_hyp1}
		There exist an open subset $D\subseteq\Xc$ and a continuous map $\tau\colon D\cap\Oc(x)\to G$ with $x\in D$, $\tau(x)=\1\in G$, and $\pi^x\circ \tau=\id_{D\cap\Oc(x)}$. 
		\item[{\rm 2.}]\label{L336_enhanced_hyp2}  
		The kernel and the range of the differential $\pi^x_{*,\1}\colon\gg\to \Xc$ are split closed subspaces of $\gg$ and $\Xc$, respectively. 
	\end{enumerate}
	Then the following assertions hold true: 
	\begin{enumerate}[{\rm(i)}]
		\item\label{L336_enhanced_item1} 
		The orbit $\Oc(x)$ is a split embedded submanifold of $\Xc$ and  $\pi^x\colon G\to\Oc(x)$ is a submersion. 
		\item\label{L336_enhanced_item2}  
		For every $y\in\Oc(x)$ there exist an open subset $V_y\subseteq\Xc$ 
		and a smooth mapping $\sigma^y\colon V_y\to G$ satisfying $y\in V_y$ and $\pi^x\circ\sigma^y\vert_{V_y\cap\Oc(x)}=\id_{V_y\cap\Oc(x)}$. 
		\item\label{L336_enhanced_item3} If there exists a norm $\Vert\cdot\Vert$ that defines the topology of~$\Xc$ and $\Vert\pi(g)v\Vert=\Vert v\Vert$ for every $v\in\Xc$,  
		then $\Oc(x)\subseteq \Xc$ 
		is a closed subset. 
	\end{enumerate}
\end{lemma}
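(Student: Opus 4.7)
The plan is to build a submanifold chart for $\Oc(x)$ around~$x$ by the inverse function theorem, propagate it equivariantly via the $G$-action, and then exploit the continuous cross section~$\tau$ together with the isometry hypothesis in~(iii). Throughout write $K:=\pi^x_{*,\1}\colon\gg\to\Xc$, fix topological splittings $\gg=\Ker K\dotplus\mg$ and $\Xc=\Ran K\dotplus\Nc$ (using Hypothesis~2), and note that $K_0:=K\vert_\mg\colon\mg\to\Ran K$ is a topological isomorphism by the open mapping theorem. I then consider
\[
\Psi\colon\mg\times\Nc\to\Xc,\qquad \Psi(m,n):=\pi(\exp m)\,x+n,
\]
whose differential at $(0,0)$ is $(m',n')\mapsto K_0(m')+n'$, a topological isomorphism onto $\Ran K\dotplus\Nc=\Xc$; the inverse function theorem furnishes open neighborhoods $\mg_0\ni 0$, $\Nc_0\ni 0$ and $U\ni x$ with $\Psi\colon\mg_0\times\Nc_0\to U$ a diffeomorphism, producing the candidate slice chart at $x$ with slice $\Psi(\mg_0\times\{0\})$.

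The key technical step is to show, after possibly shrinking $U\subseteq D$, that $\Oc(x)\cap U=\Psi(\mg_0\times\{0\})$. For the nontrivial inclusion I first check that every $h_0\in\Ker K$ satisfies $\pi(\exp(th_0))\,x=x$ for every $t\in\RR$: since $\pi$ is a Banach-Lie group morphism, $\pi(\exp(th_0))=\exp(tA)\in\Gl(\Xc)$ with $A:=\pi_{*,\1}(h_0)\in\End\Xc$, and the series $\sum_{k\ge 0}(t^k/k!)A^kx$ collapses to~$x$ because $Ax=K(h_0)=0$. A second application of the inverse function theorem, to the map $(h_0,m)\mapsto\exp(h_0)\exp(m)$ from $\Ker K\times\mg$ to~$G$, factors each $g$ near $\1\in G$ uniquely as $g=\exp(h_0)\exp(m)$; for $y\in\Oc(x)\cap D$ sufficiently close to~$x$, continuity of $\tau$ makes $\tau(y)$ close to~$\1$ and therefore admit such a factorization, yielding $y=\pi(\tau(y))\,x=\pi(\exp m)\,x=\Psi(m,0)$. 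Equivariance then propagates the slice charts to all $y=\pi(g)\,x$ via $\pi(g)\circ\Psi$, so $\Oc(x)$ is a split embedded submanifold of~$\Xc$ modeled on~$\mg$; and locally $\pi^x$ reads as $(h_0,m)\mapsto m\mapsto\Psi(m,0)$, making it a submersion with split kernel $\Ker K$. This yields~(i), and (ii) follows by taking $\sigma^x(z):=\exp(m(z))$ for $(m(z),n(z)):=\Psi^{-1}(z)$ on~$U$, then setting $V_y:=\pi(g)\,U$ and $\sigma^y(z):=g\,\sigma^x(\pi(g^{-1})z)$ for a general $y=\pi(g)\,x$.

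For~(iii), assume the norm is $\pi$-invariant and $y_n=\pi(g_n)\,x\to y$ in~$\Xc$. The isometry identity $\|\pi(g_m^{-1}g_n)\,x-x\|=\|y_n-y_m\|\to 0$ ensures that $z_n:=\pi(g_m^{-1})\,y_n\in\Oc(x)$ converges to $z:=\pi(g_m^{-1})\,y$, which lies in~$D$ once $m$ is large enough that $\|y-y_m\|$ is small. Continuity of~$\tau$ then yields $\tau(z_n)\to\tau(z)$ in~$G$, and $\tilde g_n:=g_m\,\tau(z_n)$ satisfies $\pi(\tilde g_n)\,x=\pi(g_m)\,z_n=y_n$ and converges to $g_\infty:=g_m\,\tau(z)$, so $y=\pi(g_\infty)\,x\in\Oc(x)$. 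The main obstacle will be the second paragraph: one must match the local factorization of~$G$ near $\1$ against the chart~$\Psi$ near~$x$ and use the merely continuous cross section~$\tau$ to rule out the transverse $\Nc$-component of orbit points, the crucial enabler being that $\Ker K$ actually exponentiates into the full pointwise stabilizer of~$x$ via the explicit series for $\pi(\exp(\cdot))$, rather than via any abstract Lie-theoretic statement about closed subgroups of~$G$.
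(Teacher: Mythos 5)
The decisive problem is in your proof of (iii). You apply $\tau$ to the limit point $z=\pi(g_m^{-1})y$, but $\tau$ is defined only on $D\cap\Oc(x)$, and whether $z$ lies in $\Oc(x)$ is exactly what you are trying to prove; so ``$\tau(z)$'' is undefined and the step ``continuity of $\tau$ yields $\tau(z_n)\to\tau(z)$'' is circular. Continuity of $\tau$ on the (non-closed) set $D\cap\Oc(x)$ gives no convergence of $\tau(z_n)$ at a limit point that may fall outside that set, and there is no visible reason for $(\tau(z_n))_n$ to be Cauchy in $G$. The repair is available from your own part (ii): use, instead of $\tau$, the section $\sigma^x(z)=\exp(m(z))$ that you constructed on the \emph{open} set $U\subseteq\Xc$ (not merely on $U\cap\Oc(x)$). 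Then $\sigma^x(z)$ makes sense because the isometry hypothesis puts $z$ in $U$ for $m$ large, $\sigma^x(z_n)\to\sigma^x(z)$ by continuity on an open set, and $\pi^x(\sigma^x(z))=\lim_n\pi^x(\sigma^x(z_n))=\lim_n z_n=z$, whence $z\in\Oc(x)$ and so $y\in\Oc(x)$. This is precisely how the paper argues: the whole point of its item (ii) is to have a section defined on an open neighborhood in $\Xc$, so it can be evaluated at the limit point before one knows that point belongs to the orbit.

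For (i)--(ii) your route is genuinely different from the paper's and essentially sound: you build the submanifold chart directly by two applications of the inverse function theorem (the slice map $\Psi$ and the local factorization of $G$ through $\Ker K\times\mg$), and you get the open-domain section explicitly from the chart, whereas the paper observes that the cross section makes $\pi^x$ open, invokes the lemma cited from \cite{La19} for (i), and obtains (ii) by composing a section of the submersion with the retraction $\chi^{-1}\circ\pr_2\circ\chi$ coming from a submanifold chart; your version is more self-contained, the paper's is shorter. One slip to fix: you factor $\tau(y)=\exp(h_0)\exp(m)$ with the stabilizer factor on the left, but then $\pi(\tau(y))x=\pi(\exp h_0)\,\pi(\exp m)x$, and $\pi(\exp h_0)$ fixes $x$, not the point $\pi(\exp m)x$, so the claimed identity $\pi(\tau(y))x=\pi(\exp m)x$ does not follow. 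Use the factorization in the other order, $(m,h_0)\mapsto\exp(m)\exp(h_0)$ (the same inverse-function-theorem argument applies, since the differential at $(0,0)$ is still $(m',h_0')\mapsto m'+h_0'$); the same reversal is needed where you read $\pi^x$ in the factorization chart as the projection onto the $\mg$-component.
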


\begin{proof}
	\eqref{L336_enhanced_item1} 
	The first hypothesis implies that the mapping $\pi^x\colon G\to\Oc(x)$ is open, 
	and then \cite[Lemma 3.3.6]{La19} is applicable.
	
	\eqref{L336_enhanced_item2} 
	Since $\pi^x\colon G\to\Oc(x)$ is a submersion and $\Oc(x)\subseteq\Xc$ is an embedded submanifold, there exist an open subset $\widetilde{V}_x\subseteq\Xc$ and a smooth mapping $\widetilde{\sigma}^x\colon \widetilde{V}_x\cap\Oc(x)\to G$ with $\pi^x\circ\sigma=\id_{\widetilde{V}_x\cap\Oc(x)}$ and $\sigma(x)=\1\in G$. 
	
	Now let $\Zc\subseteq\Xc$ be a closed linear subspace satisfying $\Xc=\Zc\dotplus \pi^x_{*,\1}(\gg)$. 
	Since $T_x(\Oc(x))=\Ran\pi^x_{*,\1}$ and $\Oc(x)\subseteq\Xc$ is a  submanifold, 
	there exist open subsets $V_x\subseteq \widetilde{V}_x$, $W_0\subseteq\Zc$, and $W'_0\subseteq \Ran\pi^x_{*,\1}$, and a diffeomorphism $\chi\colon V_x\to W_0\times W'_0$ with 
	$x\in V_x$, $\chi(x)=(0,0)\in W_0\times W'_0$, and $\chi(V_x\cap\Oc(x))=\{0\}\times W'_0$. 
	(See \cite[\S 2.9]{La19}.) 
	We now consider the Cartesian projection $\pr_2\colon W_0\times W'_0\to W'_0$, $(w,w')\mapsto w'$, hence we have the diagram
	$$\xymatrix{V_x \ar[d]_{\chi} \ar[r]^{\chi^{-1}\circ\pr_2\circ \chi\quad\ }&  V_x\cap\Oc(x) \ar[d]^{\chi}\ar@{^{(}->}[r] & \widetilde{V}_x\cap\Oc(x) \ar[r]^{\quad\widetilde{\sigma}^x} & 
		G\\
		W_0\times W'_0 \ar[r]^{\pr_2} & \{0\}\times W'_0 & & 
	}$$
	and then we define and $\sigma^x\colon V_x\to G$ by 
	$\sigma^x:=\widetilde{\sigma}^x\circ (\chi^{-1}\circ\pr_2\circ \chi)$. 
	
	Finally, for arbitrary $y\in\Oc(x)$ we select $g\in G$ with $y=\pi(g)x$ 
	and we define $V_y:=\pi(g)V_x\subseteq \Xc$ and $\sigma^y\colon V_y\to G$, 
	$\sigma^y:=\sigma^x\circ\pi(g)^{-1}\vert_{V_y}$.  
	
	\eqref{L336_enhanced_item3} 
	We must show that if $\lim\limits_{n\to\infty}\Vert y_n-y\Vert=0$ in $\Xc$ and $y_n\in\Oc(x)$ for every $n\ge 1$, then $y\in\Oc(x)$. 
	
	To this end, using the above notation, let $\varepsilon>0$ with $B(x,\varepsilon)\subseteq V_x$, where $B(x,\varepsilon_0):=\{v\in\Xc\mid\Vert x-v\Vert\le\varepsilon\}$. 
	Also let $n_0\ge 1$ with $\Vert y_n-y_m\Vert\le\varepsilon_0$ for all $m,n\ge n_0$. 
	Since $y_n\in\Oc(x)$, there exists $g_n\in G$ with $y_n=\pi(g_n)x$, 
	hence $\Vert x-\pi(g_{n_0})^{-1}y_n\Vert=\Vert \pi(g_{n_0}) x-y_n\Vert=\Vert y_{n_0}-y_n\Vert\le\varepsilon_0$ for all $n\ge n_0$. 
	That is, $\pi(g_{n_0})^{-1}y_n\in B(x,\varepsilon_0)\subseteq V_x$ for all $n\ge n_0$, and this also implies $\pi(g_{n_0})^{-1}y\in B(x,\varepsilon_0)\subseteq V_x$ since $y=\lim\limits_{n\to\infty}y_n$ in $\Xc$. 
	
	Now, let $\sigma\colon V_x\to G$ be the smooth mapping given by~\eqref{L336_enhanced_item2}, and define $g:=\sigma(\pi(g_0)^{-1}y)\in G$. 
	Since $\pi(g_0)^{-1}y=\lim\limits_{n\to\infty}\pi(g_0)^{-1}y_n$ in $V_x$ 
	and $\sigma$ is continuous, we obtain 
	$g=\lim\limits_{n\to\infty}\sigma(\pi(g_0)^{-1}y_n)$ in $G$. 
	Moreover, using $\pi^x\circ \sigma=\id_{V_x\cap\Oc(x)}$,  
	we have 
	$$\pi^x(g)=\lim\limits_{n\to\infty}\pi^x(\sigma(\pi(g_0)^{-1}y_n))
	=\lim\limits_{n\to\infty}\pi(g_0)^{-1}y_n=\pi(g_0)^{-1}y.$$
	Therefore $\pi(g_0)^{-1}y\in\Oc(x)$, and then $y\in\Oc(x)$, which completes the proof. 
\end{proof}

\begin{theorem}\label{finisemb}
If $a\in \Bc(\Hc)^{\nor}$ has finite spectrum, 
and $\Ac\subseteq\Bc(\Hc)$ is a $C^*$-algebra with $\1,a\in\Ac$, then 
$\Oc_\Ac(a)\subseteq\Ac$ is a closed embedded split submanifold 
and $\pi_\Ac^a\colon\U(\Ac)\to\Oc_\Ac(a)$ has local cross sections. 
If moreover $a\in\Fc(\Hc)^\nor$, then for any symmetrically normed ideals $\Jc,\Ic\subseteq\Bc(\Hc)$ the unitary orbit $\Oc_{\Jc}(a)\subseteq\Ic$  is a closed embedded split submanifold and $\pi_{\Jc,\Ic}^a\colon\U_\Jc\to\Oc_\Jc(a)$ has local cross sections.
\end{theorem}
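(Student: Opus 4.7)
The plan is to apply Lemma~\ref{L336_enhanced} twice, once for each assertion. In the $C^*$-algebra case take $G=\U(\Ac)$ acting on $\Xc=\Ac$ by $\pi(u)x:=uxu^*$ with distinguished point $x=a$; in the ideal case take $G=\U_\Jc$ acting on $\Xc=\Ic$ by the same formula (note that $a\in\Fc(\Hc)\subseteq\Ic$). Conjugation preserves the $C^*$-norm of $\Ac$ and the symmetric norm of $\Ic$, so hypothesis~(3) of Lemma~\ref{L336_enhanced} is automatic and will yield closedness of the orbit. Hypothesis~(2), that $\pi^a_{*,\1}=\delta_a$ restricted to the Lie algebra has split closed kernel and range, is exactly Lemma~\ref{finite}\eqref{finite_item1} for the $C^*$-algebra case and Lemma~\ref{finite}\eqref{finite_item2} for the ideal case. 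Everything thus hinges on producing a continuous local cross-section near $a$ to establish hypothesis~(1).

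To build this cross-section, the plan is to use holomorphic functional calculus. Let $\spec(a)=\{\lambda_0,\dots,\lambda_n\}$ with mutually orthogonal spectral projections $p_0,\dots,p_n$, and pick pairwise disjoint closed disks $\Delta_j$ around the $\lambda_j$. For $b$ in a small open neighborhood $D$ of $a$ in the norm of $\Xc$, $\spec(b)\subseteq\bigsqcup_j\Delta_j$ and the Riesz projections
\[
q_j(b):=\frac{1}{2\pi\ie}\oint_{\partial\Delta_j}(\lambda-b)^{-1}\,\de\lambda
\]
depend analytically on $b$ and satisfy $q_j(a)=p_j$. For $b\in D\cap\Oc(a)$ the holomorphic functional calculus is unitarily covariant, so $b=\sum_j\lambda_j q_j(b)$ and the $q_j(b)$ form a complete family of pairwise orthogonal projections. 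Setting $s(b):=\sum_j q_j(b)p_j$, a direct computation gives $s(b)^*s(b)\in\{p_j\}'$ and $s(b)p_j=q_j(b)s(b)$; since $s(a)=\1$, after shrinking $D$ the element $s(b)$ is invertible, and its polar decomposition $s(b)=\tau(b)|s(b)|$ produces a unitary $\tau(b)$ with $\tau(b)p_j\tau(b)^*=q_j(b)$, hence $\tau(b)a\tau(b)^*=b$ and $\tau(a)=\1$.

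In the $C^*$-algebra case the operators $p_j,q_j(b),s(b),\tau(b)$ all belong to $\Ac$ and continuity of $\tau$ in the $C^*$-norm is immediate from the Riesz integral and continuity of the polar decomposition near invertibles. In the ideal case the key observation is that $b-a\in\Fc(\Hc)$ for every $b\in\Oc_\Jc(a)$: writing $b=uau^*$ with $u\in\1+\Jc$ gives $b-a=(u-\1)au^*+a(u^*-\1)$, and both summands are finite-rank because $a\in\Fc(\Hc)$. The Riesz contour integral then forces $q_j(b)-p_j\in\Fc(\Hc)$ for each $j$, so $\tau(b)-\1$ is finite-rank with rank uniformly bounded on $D\cap\Oc_\Jc(a)$ by a constant depending only on the ranks of the $p_j$ with $\lambda_j\ne 0$. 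On such a uniformly bounded-rank subset of $\Bc(\Hc)$ the norms $\Vert\cdot\Vert$ and $\Vert\cdot\Vert_\Jc$ are equivalent, so continuity of $b\mapsto\tau(b)$ in the operator norm upgrades to continuity in $\Vert\cdot\Vert_\Jc$; since $\Vert\cdot\Vert\le\Vert\cdot\Vert_\Ic$, this yields continuity as a map $(D\cap\Oc_\Jc(a),\Vert\cdot\Vert_\Ic)\to(\U_\Jc,\Vert\cdot\Vert_\Jc)$, and $\tau(b)\in\U_\Jc$ because $\tau(b)-\1\in\Fc(\Hc)\subseteq\Jc$.

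With the three hypotheses of Lemma~\ref{L336_enhanced} verified, parts~\eqref{L336_enhanced_item1}--\eqref{L336_enhanced_item3} of that lemma give at once that $\Oc(a)$ is a closed split embedded submanifold of $\Xc$ and that $\pi^a$ admits smooth local cross-sections at every point of the orbit (by translating the cross-section at $a$ via unitary conjugation, using equivariance). The main obstacle I expect is the simultaneous bookkeeping of the three norms $\Vert\cdot\Vert$, $\Vert\cdot\Vert_\Ic$, $\Vert\cdot\Vert_\Jc$ in the ideal case, resolved by the uniform finite-rank bound on $\tau(b)-\1$.
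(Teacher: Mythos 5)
Your plan — invoke Lemma~\ref{L336_enhanced} with hypothesis~(2) supplied by Lemma~\ref{finite} and hypothesis~(1) supplied by an explicitly constructed continuous local cross-section at $a$, then get closedness from Lemma~\ref{L336_enhanced}\eqref{L336_enhanced_item3} using invariance of the norms under conjugation — is structurally the same as the paper's. The genuine difference lies in how the cross-section is built. The paper, in the $C^*$-algebra case, simply cites Fialkow \cite[Prop.\ 4.6]{Fi78}; in the ideal case it defines $\sigma(b):=u\,\Omega(E(u^*))$ for any $u\in\U_\Jc$ with $b=uau^*$, where $E$ is the conditional expectation onto $\{a\}'$ and $\Omega$ is the polar (unitary) part, and then must check that $\sigma$ is well defined independently of the choice of $u$. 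You instead build $\tau(b)=\Omega(s(b))$ with $s(b)=\sum_j q_j(b)p_j$ where $q_j(b)$ is the Riesz projection; this is manifestly a function of $b$ alone, so well-definedness is automatic. These two formulas in fact agree: from $q_j(b)=up_ju^*$ one computes $u^*s(b)=\sum_j p_ju^*p_j=E(u^*)$, so $s(b)=uE(u^*)$ and hence $\tau(b)=\Omega(uE(u^*))=u\,\Omega(E(u^*))=\sigma(b)$. So you have rediscovered the paper's cross-section in a coordinate-free form that is arguably cleaner, and you give a different continuity argument (operator-norm continuity of the Riesz/polar constructions plus a uniform rank bound) instead of the paper's estimate $\Vert x-E(x)\Vert_\Jc\le C\Vert\delta_a(x)\Vert_\Ic$ together with \cite[Prop.\ A.4]{Ne04}.

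One place your write-up leans on a claim whose stated justification doesn't quite carry the weight: you assert that since $b-a\in\Fc(\Hc)$, ``the Riesz contour integral then forces $q_j(b)-p_j\in\Fc(\Hc)$.'' A Bochner integral of rank-$\le r$ operator-valued functions is not rank-$\le r$ in general, so the integral formula $q_j(b)-p_j=\frac{1}{2\pi\ie}\oint(\lambda-b)^{-1}(b-a)(\lambda-a)^{-1}\,\de\lambda$ does not by itself yield finite rank. The conclusion is nevertheless true, but for a different reason, namely that \emph{both} $a$ and $b$ are finite-rank normal operators: taking $W:=\Ran a+\Ran b$ (finite-dimensional), one has $a=a_W\oplus 0$ and $b=b_W\oplus 0$ on $\Hc=W\oplus W^\perp$, so $(\lambda-b)^{-1}-(\lambda-a)^{-1}$ is supported on $W$ and hence so is $q_j(b)-p_j$; alternatively, for $\lambda_j\ne 0$ both $q_j(b)$ and $p_j$ are finite-rank spectral projections, and $q_0(b)-p_0=-\sum_{j\ne 0}\bigl(q_j(b)-p_j\bigr)$. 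With that patch, the uniform rank bound, the norm equivalence $\Vert\cdot\Vert\le\Vert\cdot\Vert_\Jc\le r\Vert\cdot\Vert$ on rank-$\le r$ operators, and the rest of your argument all go through, so the proof is correct; it is essentially the paper's argument recast via holomorphic functional calculus, with the extra payoff of automatic well-definedness and a self-contained treatment of the $C^*$-algebra case that avoids the reference to \cite{Fi78}.
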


\begin{proof}
We show that the hypotheses of Lemma~\ref{L336_enhanced} are satisfied. 
If $a\in \Ac^{\nor}$ has finite spectrum, 
then the operator $\delta_a\colon_{\ie\Ac^\sa}\colon \ie\Ac^\sa\to\Ac$ has split kernel and split closed range by Lemma~\ref{finite}\eqref{finite_item1}. 
Moreover, the mapping $\pi\colon \U(\Ac)\to\Oc_\Ac(a)$, $u\mapsto uau^*$, has local cross-sections by \cite[Prop. 4.6]{Fi78}. 
Hence $\Oc_\Ac(a)\subseteq\Ac$ is a closed embedded split submanifold by Lemma~\ref{L336_enhanced}. 

Now assume $a\in\Fc(\Hc)^\nor$ and let $\Jc,\Ic\subseteq\Bc(\Hc)$ be any symmetrically normed ideals. 
The operator $\delta_a\vert_{\ie\Jc^\sa}\colon\ie\Jc^\sa\to\Ic$ has split closed range and kernel by Lemma~\ref{finite}\eqref{finite_item2}. 
We will now construct a local cross-section for $\pi$ using the splitting of the range of $\pi_{*\1}$, 
following ideas in \cite{BR05} and \cite{AL10}. 
Since the range of $\delta_a\vert_{\ie\Jc^\sa}=\pi_{*\1}\colon\ie\Jc^\sa\to\Ic$ is closed and 
$\Ker(\delta_a\vert_{\ie\Jc^\sa})=\{a\}'\cap\ie\Jc^\sa
=\Ker((\id_{\Bc(\Hc)}-E)\vert_{\ie\Jc^\sa})$, it easily follows by the closed graph theorem that there exists a constant $C>0$ such that 
$$
\Vert z-E(z)\Vert _{\Jc}\le C\Vert \delta_a(z)\Vert _{\Ic}
$$
for all $z\in \ie\Jc^\sa$. 
Consider the relative open neighborhood of $a\in\Oc_\Jc(a)$
$$
V:=\{b\in \Oc_{\Jc}(a): \Vert b-a\Vert _{\Ic}<1/C\},
$$
and 
the open neighborhood of $\1\in\U_\Jc$,
$$W:=\{u\in\U_\Jc\mid uau^*\in V\}.$$
If $u=\1+x\in W\subseteq \U_{\Jc}$, then
\begin{align*}
\Vert \1-u^*E(u)\Vert
& =  \Vert u-E(u)\Vert \le \Vert u-E(u)\Vert _{\Jc} = \Vert x-E(x)\Vert _{\Jc}\le C\Vert xa-ax\Vert _{\Ic}\\
&= C\Vert ua-au\Vert _{\Ic}=C\Vert uau^*-a\Vert _{\Ic}<1.
\end{align*}
Hence 
$$
\Vert \1-u^*E(u)\Vert<1.
$$
thus $u^*E(u)\in\Bc(\Hc)$ is an invertible operator, hence $E(u)$ is invertible. 
Since $E(u)-\1=E(\1+x)-\1=E(x)\in \Jc$ therefore $E(u)\in \1+\Jc$. 
This implies that $\vert E(u)\vert =\sqrt{E(u)^*E(u)}\in \1+\Jc$ also (and it is still invertible). 
Hence, defining 
$$\Omega(E(u)):=E(u)\vert E(u)\vert ^{-1},$$ 
it is easy to check that $\Omega(E(u))$ is a unitary operator (in fact, this is the unitary operator in the polar decomposition of $E(u)$). 
It is also apparent that $\Omega(E(u))\in \1+\Jc$, therefore $\Omega(E(u)) \in U_{\Jc}$, and then $u^*E(u)\in U_{\Jc}$ as well. 
Moreover, since $u^*E(u)$ is close to $\1$ in uniform norm, it is plain 
that the map 
$$W\to\U_\Jc,\quad u\mapsto u^*E(u)$$ 
is continuous. 
Now we define $\sigma:V\to U_{\Jc}$
$$
\sigma(b)=u\Omega(E(u^*)), \quad \textrm{ when }b=uau^*\text{ with }u\in W.
$$
Note that if $b=uau^*=waw^*$ for $u,w\in W$, then $v:=u^*w$ commutes with $a$ and by the uniqueness of the polar decomposition for invertible operators, $w\Omega(E(w)^*)=uv\Omega(v^*E(w^*))=u\Omega(E(u^*))$ showing that $\sigma$ is well-defined. 
Using the continuous action of $\U_{\Jc}$, it suffices to prove that $\sigma$ is continuous at $a\in \Oc_{\Jc}(a)$ with respect to the topology inherited by $\Oc_\Jc(a)$ from~$\Ic$. 
To this end, assume then that $u_nau_n^*\to a$ in the topology of $\Ic$, with $u_n\in \U_\Jc$. 
Then $\Vert u_na-au_n\Vert _{\Ic}\to 0$ and as above, $\Vert \1-u_nE(u_n)\Vert _{\Jc}=\Vert u_n-E(u_n^*)\Vert _{\Jc}\to 0$. 
Using the continuity of $\Omega(\cdot)$ given by \cite[Prop. A.4]{Ne04}, we have 
$$
\sigma(u_nau_n^*)=u_n\Omega(E(u_n^*))=\Omega(u_nE(u_n^*))\to\1 
\text{ in }\U_\Jc.
$$
Finally, note that $E(u)^*=\1+E(x^*)
$ commutes with $a$, therefore $\Omega(E(u^*))$ also commutes with $a$ and
$$
\pi(\sigma(b))=\sigma(b)a\sigma(b)^*=u\Omega(E(u^*))a\Omega(E(u^*))u^*=uau^*=b
$$
which proves that $\sigma$ is a local cross-section for $\pi$. 
Hence $\Oc(a)\subseteq\Bc(\Hc)$ is a closed embedded split submanifold by Lemma~\ref{L336_enhanced}, and this completes the proof. 
\end{proof}

It is well-known that 
the unitary orbit of 
\textit{any} 
operator $a\in \Bc(\Hc)^{\nor}$ is closed 
in the operator norm topology 
if and only if $\spec(a)$  is a finite set. 
(See \cite[Lemma 2.3]{Fi75} or \cite[Prop. 2.4]{Vo76}.) 
Thus the  following theorem   deals with closures in other norms, and generalizes  \cite[Theorem 5.1]{GKMSu17} 
along the same lines. 
Another result of this type is obtained in Corollary~\ref{closed} by a completely different approach.

\begin{theorem}\label{isclosed}
For any symmetrically normed ideals $\Jc\subseteq\Bc(\Hc)$ and $\Ic\subsetneqq\Bc(\Hc)$ with 
$a\in\Ic^\nor$, 
the 
orbit $\Oc_{\Jc}(a)
\subseteq  \Ic
$ is closed in $\Ic$ 
if and only if $\spec(a)$ is finite.
\end{theorem}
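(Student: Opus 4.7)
I would split the proof into the two implications.

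For the easy direction, assume $\spec(a)$ is finite. Since $\Ic\subsetneqq\Bc(\Hc)$ is a proper two-sided ideal of $\Bc(\Hc)$, we have $\Ic\subseteq\Kc(\Hc)$, so $a\in\Ic^\nor$ is a compact normal operator. A compact normal operator with finite spectrum has all its nonzero spectral projections of finite rank, hence $a\in\Fc(\Hc)^\nor$. Then Theorem~\ref{finisemb} applies and yields that $\Oc_\Jc(a)\subseteq\Ic$ is a closed embedded split submanifold of $\Ic$, in particular closed.

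For the converse, suppose $\Oc_\Jc(a)\subseteq\Ic$ is closed in~$\Ic$. Again $\Ic\subseteq\Kc(\Hc)$, so $a$ is compact normal and hence diagonalizable in the sense of Remark~\ref{conde}. My goal is to invoke Lemma~\ref{splits}\eqref{splits_item2}. Note that $\delta_a(\Jc^\sa)\subseteq\Ic\cap\Jc\subseteq\Ic$ is automatic from $a\in\Ic$ via Remark~\ref{incl}. So it remains to verify that $\delta_a\vert_{\ie\Jc^\sa}\colon\ie\Jc^\sa\to\Ic$ has closed range; once this is done, Lemma~\ref{splits}\eqref{splits_item2} concludes $\spec(a)$ is finite.

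To obtain closed range from closedness of the orbit, I would use the exponential parameterization of $\U_\Jc$ from Remark~\ref{uconnected}: every $u\in\U_\Jc$ has a logarithm $z\in\ie\Jc^\sa$ with $\Vert z\Vert\le\pi$, so
$$
\Oc_\Jc(a)-a=\{e^z a e^{-z}-a\mid z\in\ie\Jc^\sa,\ \Vert z\Vert\le\pi\}=F(\ie\Jc^\sa\cap\{\Vert z\Vert\le\pi\}),
$$
where $F(z):=e^z a e^{-z}-a$ is smooth with $F(0)=0$ and $DF(0)=\delta_a\vert_{\ie\Jc^\sa}$. The hypothesis makes $F(\ie\Jc^\sa)$ closed in~$\Ic$. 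To extract closedness of $\Ran DF(0)$, pick $y_n=\delta_a(x_n)\to y$ in~$\Ic$ and study the curves $t\mapsto F(tx_n)=t\,\delta_a(x_n)+O(t^2\Vert x_n\Vert_\Jc^2)$. If $\Vert x_n\Vert_\Jc$ stays bounded, choosing $t$ small and passing to a limit places (a small multiple of) $y$ in a closed ball of the orbit shifted by $-a$; otherwise I would run a Baire category / open-mapping argument on the continuous surjection $F\colon\{\Vert z\Vert_\Jc\le R\}\to F(\ie\Jc^\sa)\cap\overline{B}$, using that the target is a complete metric space and that the adjoint $\U_\Jc$-action covers the orbit homogeneously, to conclude that $F$ is locally open near $0$ and hence that $DF(0)=\delta_a\vert_{\ie\Jc^\sa}$ has closed range.

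\textbf{Main obstacle.} The essential difficulty lies precisely in the implication ``closed orbit $\Rightarrow$ closed range of $\delta_a\vert_{\ie\Jc^\sa}$''. For a linear map closed image trivially equals closed range, but here $F$ is nonlinear, so this requires exploiting the $\U_\Jc$-homogeneity of the orbit (acting by isometries of the operator norm, though not necessarily of $\Vert\cdot\Vert_\Ic$) together with a Baire category argument. Once closed range is secured, the proof concludes immediately via Lemma~\ref{splits}\eqref{splits_item2}, applicable because a compact normal operator is automatically diagonalizable.
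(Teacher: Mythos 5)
Your ``if'' direction is correct and is exactly the paper's (reduce to $a\in\Fc(\Hc)^\nor$, since a proper ideal consists of compact operators, and quote Theorem~\ref{finisemb}). The converse, however, has a genuine gap at precisely the step you flag: nothing in your sketch actually proves the implication ``$\Oc_\Jc(a)$ closed in $\Ic$ $\Rightarrow$ $\delta_a\vert_{\ie\Jc^\sa}\colon\ie\Jc^\sa\to\Ic$ has closed range''. The map $F(z)=e^zae^{-z}-a$ restricted to a ball $\{\Vert z\Vert_\Jc\le R\}$ is not surjective onto a relative neighbourhood of $0$ in $\Oc_\Jc(a)-a$: the bound $\Vert z\Vert\le\pi$ from Remark~\ref{uconnected} is in the operator norm, not in $\Vert\cdot\Vert_\Jc$, so points of the orbit close to $a$ may only be reached by logarithms of arbitrarily large $\Jc$-norm (failure of micro-transitivity is exactly what is at stake). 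To run Baire you would have to cover the orbit by the countably many sets $F(\{\Vert z\Vert_\Jc\le n\})$; these images need not be closed (balls of $\Jc$ are neither compact nor weakly compact in general), so ``non-meagre'' does not yield interior points, and the iterative correction that completes the linear open mapping theorem is unavailable for the nonlinear $F$. Effros-type automatic-openness theorems are also out of reach because $\U_\Jc$ need not be separable (e.g.\ for maximal ideals $\Jc=\Jc^M$). Finally, even granting local openness of the orbit map, closed range of the \emph{linear} map $\delta_a$ is a further unproved step: it requires, for every $y\in\Ran\delta_a$ (an arbitrary commutator, not an orbit difference $uau^*-a$), a preimage of controlled $\Jc$-norm, and in the critical case $\Vert x_n\Vert_\Jc\to\infty$ your curve argument $t\mapsto F(tx_n)$ gives nothing. (A minor slip: unitary conjugation is isometric for every symmetric norm, so $\U_\Jc$ does act by $\Vert\cdot\Vert_\Ic$-isometries; cf.\ Remark~\ref{remsubma}.)

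The paper avoids all of this by proving the contrapositive with an explicit construction: if $\spec(a)$ is infinite, pick distinct eigenvalues $x_n$ with $\sum_n\vert x_n\vert<\infty$, rank-one subprojections $q_n$ of the corresponding eigenprojections, and finite permutation unitaries $u_n\in\U_{\Fc(\Hc)}\subseteq\U_\Jc$ cyclically shifting $q_1,\dots,q_{n+1}$; then $\Vert u_nau_n^*-a'\Vert_\Ic\le 2\sum_{k\ge n+1}\vert x_k\vert\to0$ for a ``shifted'' operator $a'$, and $a'\notin\Oc_{\Bc(\Hc)}(a)\supseteq\Oc_\Jc(a)$ (for instance $\Ran q_1\subseteq\Ker a'$ when $\Ker a=\{0\}$), so the orbit is not closed in $\Ic$. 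In other words, the paper derives finiteness of the spectrum from closedness by exhibiting a limit point outside the orbit, and treats the closed-range condition separately (Lemma~\ref{splits}, Theorems~\ref{noncompact} and~\ref{au_th}); your plan would in effect require proving ``closed orbit $\Rightarrow$ closed range'' directly, which is essentially as hard as the theorem itself and is not achieved by the sketched Baire argument.
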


\begin{proof}
	The ``if'' part of this theorem was proved in 	Theorem~\ref{finisemb}. Conversely, 
	we prove that if $a\in\Ic^\nor$ and $\spec(a)$ is infinite, then 
	\begin{equation}\label{isclosed_proof_eq1}
	\overline{\Oc_{\Fc(\Hc)}(a)}^{\Vert\cdot\Vert_\Ic}\setminus\Oc_{\Bc(\Hc)}(a)\ne\emptyset
	\end{equation} 
	This implies in particular 
	$\Oc_\Jc(a)\subsetneqq \overline{\Oc_\Jc(a)}^{\Vert\cdot\Vert_\Ic}$, 
	since otherwise  $\overline{\Oc_{\Fc(\Hc)}(a)}^{\Vert\cdot\Vert_\Ic}\subseteq \overline{\Oc_\Jc(a)}^{\Vert\cdot\Vert_\Ic} =\Oc_\Jc(a)\subseteq \Oc_{\Bc(\Hc)}(a)$, which is a contradiction with~\eqref{isclosed_proof_eq1}. 
	
	Since $a\in\Ic\subseteq\Kc(\Hc)$ and $\spec(a)$ is infinite, 
	it follows that $\spec(a)\setminus\{0\}$ can be labeled as a sequence of mutually distinct complex numbers convergent to $0$, and then we 
	may select a sequence of mutually distinct eigenvalues $\{x_n\}_{n\ge 1}$ with 
	$\sum\limits_{n\ge 1}\vert x_n\vert<\infty$. 
	Let us denote $Y:=\spec(a)\setminus\{x_n\mid n\ge 1\}$ 
	and $p_y:=\Ker(a-y\id_\Hc)$ for any $y\in Y$. 
	For every $n\ge 1$ we denote $p_n:=\Ker(a-x_n\id_\Hc)$ and 
	we select a rank-one projection $q_n\le p_n$. 
	 We then define 
	 $$a_0:=\sum_{n\ge 1}x_nq_n\in\Kc(\Hc)^\nor$$
	 hence 
	 $$a=a_0+\sum_{n\ge 1}x_n(p_n-q_n)+\sum_{y\in Y}yp_y.$$
	Let $p_0:=\sum_{n\ge1} q_n$,  $\Hc_0=\overline{\Ran(a_0)}=\Ran(p_0)$, and  $\Xc_n=\Ran(\sum\limits_{j=1}^{n+1} q_j)$ for $n\ge1$. 
	
	We now modify slightly $a_0$: for each $n\ge 1$ let
	$$
	a_0^n= x_{n+1}q_1+ \sum_{i=1}^n x_i q_{i+1}  + \sum_{i\ge n+2}x_i q_i 
	$$
	Note that $a_0^n$ is obtained from $a_0$ by a finite permutation of the basis of $\Xc_n$,  
	thus $a_0^n=u_na_0u_n^*$ with $u_n\in \U(\Hc)$ and $(\1-u_n)\Xc_n^\perp=\{0\}$. 
Then clearly 
	$u_n\in \U(\Hc)\cap(\1+\Fc(\Hc))\subseteq\U_\Jc$ and
	$$
	a_n:=u_n a u_n=a_0^n+(a-a_0)\in \Oc_{\Jc}(a) \quad \forall n\in\mathbb N.
	$$
	Now let $a_0':=\sum_{i\ge 1} x_i q_{i+1}$ and  $a':=a_0'+(a-a_0)$, and note that
	\begin{align*}
	\Vert a'-a_n\Vert_\Ic 
	& =\Vert a_0'-a_0^n\Vert_\Ic 
	=\Vert x_{n+1}q_1+\sum_{k\ge n} (x_{k+2}-x_{k+1})q_{k+2}\Vert_\Ic \\
	& \le \vert x_{n+1}\vert+\sum_{k\ge n} \vert x_{k+2}-x_{k+1}\vert \le 2\sum_{k\ge n+1} \vert x_k\vert ,
	\end{align*}
Letting $n\to \infty$  we obtain $\lim\limits_{n\to\infty}\Vert a'-a_n\Vert_\Ic=0$. 
	Thus $a'\in\overline{\Oc_\Jc(a)}^{\Vert\cdot\Vert_\Ic}$. 
	If $\Ker a=\{0\}$, then $a'\not\in \Oc_\Jc(a)$  
	since $\Ran q_1\subseteq\Ker a'$, and this proves that the orbit is not closed. If $\Ker a\ne\{0\}$, some modifications of the previous argument as in \cite[Th. 5.1]{GKMSu17} also show that the orbit is not closed.
\end{proof}

For any symmetrically normed ideals $\Jc$ and $\Ic$, 
we define the following subsets of~$\Ic^\nor$: 
\begin{align*}
{\rm LocCl}_{\Jc}(\Ic):=
&\{a\in\Ic^\nor\mid \Oc_{\Jc}(a) \text{ is locally closed in }\Ic\},\\
{\rm Cl}_{\Jc}(\Ic):=
&\{a\in\Ic^\nor\mid \Oc_{\Jc}(a) \text{ is closed in }\Ic\},\\
{\rm Cross}_{\Jc}(\Ic):=
&\{a\in\Ic^\nor\mid \pi_{\Jc,\Ic}^a \text{ has local cross-sections}\},\\
{\rm Sbm}_{\Jc}(\Ic):=
&\{a\in\Ic^\nor\mid \Oc_{\Jc}(a) \text{ is a submanifold of }\Ic\}.
\end{align*}
\end{definition}

\begin{remark}\label{remsubma}
In general, one has: $ {\rm Sbm}_{\Jc}(\Ic) \subseteq {\rm LocCl}_{\Jc}(\Ic)={\rm Cl}_{\Jc}(\Ic)$. 
For the last equality of sets, we use that  the action of $\Ic$ in $\Oc_{\Jc}(a)$ is transitive, for any  $\Jc$ and $\Ic$. 
Specifically, if $a\in\Ic^\nor$ and  $\Oc_{\Jc}(a)$ is locally closed in $\Ic$, there exists $\varepsilon>0$ such that the set $C:=\{b\in  \Oc_{\Jc}(a): \Vert b-a\Vert _{\Ic}\le\varepsilon\}$ is closed in~$\Ic$. 
If $b_n=u_nau_n^*\in \Oc_{\Jc}(a)$  with $u_n\in\U_{\Jc}$ and $\Vert b_n- b\Vert _{\Ic}\to 0$ as $n\to\infty$, pick $n_0\in\NN$ such that $\Vert b_n-b_m\Vert _{\Ic}\le\varepsilon$ for $n,m\ge n_0$ and rename 
$b_n':=u_{n_0}^*b_nu_{n_0}\to u_{n_0}^*bu_{n_0}=:b'$ as $n\to\infty$. 
Since $b_{n_0}'=a$ and $b_n'\in C$ for all $n\ge n_0$, we have $b'\in C$.  
In particular $b'\in\Oc_{\Jc}(a)$, hence $b'=uau^*$ for some $u\in U_{\Jc}$, and then $b=(u_{n_0}u)a(u_{n_0}u)^*\in \Oc_{\Jc}(a)$. 

The above reasoning involves only the norm of $\Ic$, hence 
the operator ideal $\Jc$ need not carry any norm. 
A more general conclusion can be obtained along these lines: 
Let  $U$ be a group and $U\times X\to X$, $(u,x)\mapsto u.x$ 
be an action on a Hausdorff uniform space~$X$ satisfying the condition 
that for every entourage $\Ec\subseteq X\times X$ in a basis of entourages and every $u\in U$ 
we have $(u.x,u.y)\in\Ec$ if $(x,y)\in\Ec$. 
(For instance,  any group action by isometries on a metric space.) 
Then for any $a\in X$ its orbit $U.a$ is a locally closed subset of $X$ if and only if it is a closed subset of $X$. 
In fact, if $U.a$ is locally closed, then there exists an entourage~$\Ec$ 
in for which the set $C:=U.a\cap\Ec(a)$ is closed in $X$, where $\Ec(a):=\{x\in X:(a,x)\in\Ec\}$. 
If $\{u_i\}_{i\in I}$ is a net in $U$ for which there exists $b\in X$ with $u_i.a\to b$ as $i\in I$, then there exists $i_\Ec\in I$ such that if $i,j\ge i_\Ec$ then $(u_j.a,u_i.a)\in\Ec$. 
This implies $(a,u_{i_\Ec}^{-1}u_i)\in\Ec$, that is, $u_{i_\Ec}^{-1}u_i\in\Ec(a)\cap U.a=C$ for al $i\ge i_\Ec$. 
On the other hand, since $u_i.a\to b$ as $i\in I$, we have 
$u_{i_\Ec}^{-1}u_i.a\to u_{i_\Ec}^{-1}.b$ as $i\in I$, 
hence $u_{i_\Ec}^{-1}.b\in C\subseteq U.a$. 
Therefore $b\in U.a$, and we are done. 
\end{remark}

Special cases of the following theorem can be found in several places in the earlier literature: \cite[Th. 4.5]{AS89},  \cite[Th. 3.2]{BR05}, \cite[Th. 4.37]{B06}, \cite[Th. 4.4]{AL10},  \cite[Lemma 1]{BoVa16}, \cite[Th. 5.1]{GKMSu17}. 
Moreover for the case of the maximal ideal of a given symmetric norm, the equivalence between finite spectrum and submanifold structure was settled in \cite{ChiDIyL13}.

\begin{theorem}\label{locl}
If $\Ic,\Jc\subseteq\Bc(\Hc)$ are symmetrically normed ideals, then  
$${\rm LocCl}_{\Jc}(\Ic)
={\rm Cl}_{\Jc}(\Ic)
={\rm Cross}_{\Jc}(\Ic)
={\rm Sbm}_{\Jc}(\Ic)
=\Fc(\Hc)^\nor\subseteq\Ic^\nor.$$ 
\end{theorem}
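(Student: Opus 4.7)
The plan is to prove the chain of equalities by a short cycle of inclusions that pins down all four sets as $\Fc(\Hc)^\nor$. The inclusion $\Fc(\Hc)^\nor\subseteq\Ic^\nor$ is immediate because, by Definition~\ref{adm}, every nonzero symmetrically normed ideal contains the finite-rank operators.

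For the forward inclusions, Theorem~\ref{finisemb} does most of the work: if $a\in\Fc(\Hc)^\nor$ then $\Oc_\Jc(a)$ is a closed embedded split submanifold of~$\Ic$ and $\pi^a_{\Jc,\Ic}$ admits local cross-sections, giving $\Fc(\Hc)^\nor\subseteq {\rm Sbm}_\Jc(\Ic)\cap {\rm Cross}_\Jc(\Ic)\cap {\rm Cl}_\Jc(\Ic)$. For the reverse direction, Theorem~\ref{isclosed} gives ${\rm Cl}_\Jc(\Ic)\subseteq \Fc(\Hc)^\nor$, where one uses the elementary fact that an element of $\Ic\subseteq\Kc(\Hc)$ which is normal with finite spectrum is automatically of finite rank (each nonzero spectral projection of a compact normal operator is finite-dimensional by the Riesz--Schauder theorem). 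Remark~\ref{remsubma} supplies ${\rm Sbm}_\Jc(\Ic)\subseteq {\rm LocCl}_\Jc(\Ic)= {\rm Cl}_\Jc(\Ic)$, where the equality uses the fact that $\U_\Jc$ acts on $\Ic$ by isometries. Combining these steps yields ${\rm Sbm}_\Jc(\Ic)={\rm LocCl}_\Jc(\Ic)={\rm Cl}_\Jc(\Ic)=\Fc(\Hc)^\nor$.

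The remaining piece is ${\rm Cross}_\Jc(\Ic)\subseteq\Fc(\Hc)^\nor$, which I would close by establishing ${\rm Cross}_\Jc(\Ic)\subseteq {\rm LocCl}_\Jc(\Ic)$. Given $a\in{\rm Cross}_\Jc(\Ic)$ and a continuous local cross-section $\sigma\colon V\to \U_\Jc$ at $a$ with $V=W\cap\Oc_\Jc(a)$ for some open $W\subseteq\Ic$ and $\sigma(a)=\1$, the map
$$\Phi\colon \U_\Jc(a)\times V\to \pi^{-1}(W),\quad (h,b)\mapsto \sigma(b)h,$$
is a continuous bijection with continuous inverse $u\mapsto (\sigma(\pi(u))^{-1}u,\pi(u))$, hence a homeomorphism onto the open subset $\pi^{-1}(W)$ of the Banach--Lie group~$\U_\Jc$. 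Composing $\sigma$ with the exponential chart at $\1\in\U_\Jc$ furnishes a continuous local right inverse $y\colon V'\to \ie\Jc^\sa$ of the smooth map $z\mapsto \exp(z)a\exp(-z)$ on a smaller relative neighborhood $V'$ of~$a$, with $y(a)=0$. A completeness-plus-implicit-function argument, using the product decomposition $\Phi$ together with the splitting properties of the ambient exponential chart, then forces $V'$ to be relatively closed in some norm-open neighborhood of $a$ in~$\Ic$; transporting this local closedness along the action yields $\Oc_\Jc(a)\in{\rm LocCl}_\Jc(\Ic)$.

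The main obstacle is this last step. Continuity of $\sigma$ directly yields convergence of lifts in $\U_\Jc$ only for sequences in $V$ which already converge within $V$ in the inherited topology, and it gives no immediate control over a sequence in $V$ converging in $\Ic$ to a point of $W\setminus\Oc_\Jc(a)$. The algebraic product decomposition $\Phi$ is indispensable, but turning it into the desired analytic statement requires either an implicit-function argument in the Banach setting or, as an alternative route, adapting the Weyl--von Neumann approximation used in the proof of Theorem~\ref{isclosed} to rule out exterior limit points precisely when a cross-section is available---equivalently, using the cross-section to derive closed split range of $\delta_a$ and then concluding finiteness of $\spec(a)$ via Theorems~\ref{noncompact} and~\ref{au_th}. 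Either way, this is the analytic heart of the implication and the one point where the cross-section data is genuinely exploited rather than merely cited.
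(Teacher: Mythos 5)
Your outline of the chain of inclusions is exactly the paper's structure: $\Fc(\Hc)^\nor\subseteq{\rm Sbm}_\Jc(\Ic)\cap{\rm Cross}_\Jc(\Ic)$ from Theorem~\ref{finisemb}, ${\rm Sbm}_\Jc(\Ic)\subseteq{\rm LocCl}_\Jc(\Ic)={\rm Cl}_\Jc(\Ic)$ from Remark~\ref{remsubma}, ${\rm Cl}_\Jc(\Ic)=\Fc(\Hc)^\nor$ from Theorem~\ref{isclosed}, and the loop is closed by ${\rm Cross}_\Jc(\Ic)\subseteq{\rm Cl}_\Jc(\Ic)$. You also correctly identify this last inclusion as the hard step. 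But your write-up leaves it genuinely open, and both escape routes you sketch are problematic. The product decomposition $\Phi$ plus an implicit-function argument would, as you yourself notice, only control convergence to points already in the orbit; it gives no handle on a sequence $u_k^*au_k$ converging in $\Ic$ to $b\notin\Oc_\Jc(a)$. The alternative of ``using the cross-section to derive closed split range of $\delta_a$ and then concluding finiteness of $\spec(a)$ via Theorems~\ref{noncompact} and~\ref{au_th}'' also fails as stated: Theorem~\ref{noncompact} tests closedness of the range only in $\Jc$ or $\Bc(\Hc)$ (not in the relevant codomain~$\Ic$) and only for $a$ self-adjoint, while Theorem~\ref{au_th} requires $\Ic$ to sit in a dual pair, a hypothesis absent from Theorem~\ref{locl}. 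Moreover, it is not established anywhere that a continuous local cross-section forces $\delta_a\vert_{\ie\Jc^\sa}\colon\ie\Jc^\sa\to\Ic$ to have closed range.

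The paper's actual argument for ${\rm Cross}_\Jc(\Ic)\subseteq{\rm Cl}_\Jc(\Ic)$ is quite different in flavor: it is a sequential, infinite-product argument adapted from \cite[Prop.~2.1]{Fi78}. From the cross-section one first extracts a quantitative statement: for every $n\ge1$ there is $\delta_n>0$ such that whenever $u\in\U_\Jc$ and $\Vert u^*au-a\Vert_\Ic<\delta_n$, there exists $w\in\U_\Jc$ with $\Vert w-\1\Vert_\Jc<2^{-n}$ and $w^*aw=u^*au$. Given $u_k\in\U_\Jc$ with $u_k^*au_k\to b$ in $\Ic$, one inductively selects a subsequence $k_1<k_2<\cdots$ so that consecutive conjugates are within $\delta_n$, obtaining $v_n\in\U_\Jc$ with $\Vert v_n-\1\Vert_\Jc<2^{-n}$ and $u_{k_{n+1}}^*au_{k_{n+1}}=v_n^*u_{k_n}^*au_{k_n}v_n$. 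Since $\sum_n\Vert v_n-\1\Vert_\Jc<\infty$, the infinite product $v_1v_2\cdots$ converges in the Banach algebra $\CC\1+\Jc$ by the criterion of \cite[Ch.~IX, App.~II, Th.~2]{Bou06}, producing $v\in\U_\Jc$ with $b=v^*u_{k_1}^*au_{k_1}v\in\Oc_\Jc(a)$. This is the key mechanism by which the cross-section data is exploited---not to compute a differential, but to summably control the unitaries implementing near-approach in norm---and it is the ingredient missing from your proposal.
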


\begin{proof}
We have $ {\rm Sbm}_{\Jc}(\Ic) \subseteq {\rm LocCl}_{\Jc}(\Ic)={\rm Cl}_{\Jc}(\Ic)$ by Remark~\ref{remsubma}. 
Moreover, ${\rm Cl}_{\Jc}(\Ic)=\Fc(\Hc)^\nor$ by Theorem~\ref{isclosed} 
and $\Fc(\Hc)^\nor\subseteq {\rm Sbm}_{\Jc}(\Ic)\cap{\rm Cross}_{\Jc}(\Ic)$ 
by Theorem \ref{finisemb} and its proof. 

It remains to check that ${\rm Cross}_{\Jc}(\Ic)\subseteq  {\rm Cl}_{\Jc}(\Ic)$, and to this end we adapt the method of proof of \cite[Prop. 2.1]{Fi78}. 
We must prove that for arbitrary $a\in {\rm Cross}_{\Jc}(\Ic)$ and $b\in\overline{\Oc_\Jc(a)}^{\Vert\cdot\Vert_\Ic}$ one has $b\in \Oc_\Jc(a)$. 
Using a local cross section $\sigma\colon D\cap\Oc_\Jc(a)\to\U_J$ of $\pi=\pi_{\Jc,\Ic}\colon\U_\Jc\to\Oc_\Jc(a)\subseteq\Ic$, for a suitable open subset $D\subseteq\Ic$ with $a\in D$, it is easily shown that for every $n\ge 1$ there exists $\delta_n>0$ such that if $u\in\U_\Jc$ and $\Vert u^*au-a\Vert_\Ic<\delta_n$ then there exists $w\in\U_\Jc$ with $\Vert w-\1\Vert_\Jc<1/2^n$ and $w^*aw=u^*au$. 
Now let $\{u_k\}_{k\ge 1}$ be any sequence in $\U_\Jc$ with $\lim\limits_{k\to\infty}\Vert u_k^*au_k-b\Vert_\Ic=0$. 
Then we can inductively select a sequence $1\le k_1<k_2<\cdots$ 
such that, for every $n\ge 1$, if $k\ge k_n$ then $\Vert u_k^*au_k-b\Vert_\Ic<\delta_n/2$. 
Since $k_{n+1}>k_n$ we have 
$\Vert u_{k_{n+1}}^*au_{k_{n+1}}-u_{k_n}^*au_{k_n}\Vert_\Ic
\le \Vert u_{k_{n+1}}^*au_{k_{n+1}}-b\Vert_\Ic
+\Vert b-u_{k_n}^*au_{k_n}\Vert_\Ic<\delta_n/2+\delta_n/2=\delta_n$, 
hence 
$$\Vert u_{k_n}u_{k_{n+1}}^*au_{k_{n+1}}u_{k_n}^*-a\Vert_\Ic<\delta_n.$$
By the way $\delta_n$ was selected, there exists $w_n\in\U_\Jc$ with $\Vert w_n-\1\Vert_\Jc<1/2^n$ and $w_n^*aw_n= u_{k_n}u_{k_{n+1}}^*au_{k_{n+1}}u_{k_n}^*$. 
Defining $v_n:=u_{k_n}^*w_nu_{k_n}$, we have $\Vert v_n-\1\Vert_\Jc<1/2^n$ and $u_{k_{n+1}}^*au_{k_{n+1}}=v_n^*u_{k_n}^*au_{k_n}v_n$. 
Iterating this formula for $n,n-1,\dots,1$, we obtain 
\begin{equation}
\label{locl_proof_eq1}
u_{k_{n+1}}^*au_{k_{n+1}}=v_n^*v_{n-1}^*\cdots v_1^*u_{k_1}^*a
u_{k_1}v_1\cdots v_{n-1}v_n.
\end{equation}
Here $\sum\limits_{n\ge 1}\Vert v_n-\1\Vert_\Jc<\infty$ hence, 
an application of the criterion \cite[Ch. IX, App. II, Th. 2]{Bou06} for  convergence of infinite products in the Banach algebra $\CC\1+\Jc$
shows that there exists $v\in\CC\1+\Jc$ with $\lim\limits_{n\to\infty}\Vert v-v_1\cdots v_{n-1}v_n\Vert_\Jc=0$. 
Since $v_n\in\U_\Jc$ for every $n\ge 1$, we easily obtain $v\in\U_\Jc$ and, 
passing to the limit for $n\to\infty$ in \eqref{locl_proof_eq1}, 
we obtain $b=v^*u_{k_1}^*au_{k_1}v$, hence $b\in\Oc_\Jc(a)$. 
This completes the proof. 
\end{proof}

We now take a look at the differentiable structure of the orbit (as a quotient manifold, with a topology that is possibly finer than the topology of the orbit as subspace of $\Ic$). 
This structure can be constructed if $\Jc$ is am-closed.

\begin{definition} 
For any operator ideal $\Jc\subseteq\Bc(\Hc)$ and $a\in\Bc(\Hc)$ 
we define 
$$\U_{\Jc}(a):=\{u\in \U_{\Jc}: uau^*=a\},$$ 
the isotropy subgroup of the action $\pi$. 
If $\Jc$ is a symmetrically normed ideal, then clearly $\U_{\Jc}(a)$ is a closed subgroup of $\U_{\Jc}$.
\end{definition}

One can endow the unitary orbit $\Oc_{\Jc}(a)$ with the structure of a real analytic Banach manifold via the bijective map  
\begin{equation}\label{bij}
\U_{\Jc}/\U_{\Jc}(a)\to\Oc_{\Jc}(a),\quad u\U_\Jc(a)\mapsto uau^*
\end{equation} 
as follows. 

\begin{theorem}\label{manifold}
If the symmetrically normed ideal $\Jc$ is am-closed 
and $a\in\Kc(\Hc)^\nor$, 
then the following assertions hold true. 
\begin{enumerate}[{\rm(i)}]
\item\label{manifold_item1} $U_{\Jc}(a)$ is a Banach-Lie subgroup of $U_{\Jc}$ (with the inherited manifold topology).
\item\label{manifold_item2} The Lie algebra of $U_{\Jc}(a)$ is $\ker(\pi_{*1})\subset \ie\Jc^{sa} =\ug_{\Jc}$, and it is a split sub-algebra:
$$
\ug_{\Jc}=\lie(U_{\Jc}(a))\oplus \ker(E\vert _{\ug_{\Jc}}).
$$
\item\label{manifold_item3} The orbit $\Oc=\Oc_{\Jc}(a)$ is a real analytic manifold, and a homogeneous space for the smooth action of $A:U_{\Jc}\times \Oc\to \Oc$, $A(u,b)=ubu^*$.
\end{enumerate}
\end{theorem}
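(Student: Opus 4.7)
The plan is to reduce everything to Lemma~\ref{splits}\eqref{splits_item1}, Remark~\ref{contexp}, and the theory of algebraic Banach-Lie subgroups from \cite{HK77}, exploiting the compactness of $a$ to ensure diagonalizability.

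Setup: since $a\in\Kc(\Hc)^\nor$, the spectral theorem for compact normal operators gives a diagonalization of $a$ as in \eqref{alambdda}, so the conditional expectation $E\colon\Bc(\Hc)\to\{a\}'$ of Remark~\ref{conde} is available with $\Ran E=\{a\}'$. Because $\Jc$ is am-closed, Remark~\ref{contexp} yields $E(\Jc)\subseteq\Jc$ and the restriction $E\vert_\Jc\colon\Jc\to\Jc$ is a bounded idempotent. Since every $p_k$ is self-adjoint, $E$ commutes with the involution $x\mapsto x^*$, so $E\vert_{\ug_\Jc}\colon\ug_\Jc\to\ug_\Jc$ is a well-defined bounded idempotent operator.

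I would first establish \eqref{manifold_item2}. The bounded idempotent $E\vert_{\ug_\Jc}$ yields the topological direct sum decomposition
$$\ug_\Jc=\Ran(E\vert_{\ug_\Jc})\oplus\Ker(E\vert_{\ug_\Jc}).$$
Applying Lemma~\ref{splits}\eqref{splits_item1} with $\Ic$ replaced by $\Jc$ and then intersecting with $\ie\Jc^\sa$ gives
$$\Ran(E\vert_{\ug_\Jc})=\{a\}'\cap\ug_\Jc=\Ker(\delta_a\vert_{\ug_\Jc})=\Ker(\pi_{*\1}).$$
This identifies the candidate Lie algebra as a closed, complemented subspace of $\ug_\Jc$, with complement $\Ker(E\vert_{\ug_\Jc})$.

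Next I would prove \eqref{manifold_item1}. The stabilizer can be rewritten as
$$\U_\Jc(a)=\{u\in\U_\Jc\mid ua=au\}=\U_\Jc\cap\{u\in\1+\Jc\mid [u,a]=0\},$$
so it is the intersection of $\U_\Jc$ with the zero set of the continuous linear polynomial $u\mapsto [u,a]$ on the Banach algebra $\CC\1+\Jc$. Thus $\U_\Jc(a)$ is an algebraic subgroup of the group of invertible elements of $\CC\1+\Jc$ in the sense of Harris-Kaup, and the general theorem of \cite{HK77}
asserts that it is a Banach-Lie subgroup with its inherited topology, whose Lie algebra is
$$\lie(\U_\Jc(a))=\{x\in\ug_\Jc\mid [x,a]=0\}=\Ker(\pi_{*\1}),$$
matching the split subalgebra identified in the previous step. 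This simultaneously completes \eqref{manifold_item2}.

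Finally, for \eqref{manifold_item3}, since $\U_\Jc(a)$ is a Banach-Lie subgroup of $\U_\Jc$ whose Lie algebra is topologically complemented in $\ug_\Jc$, the standard theory of homogeneous spaces of Banach-Lie groups (see e.g.\ \cite[Ch.\ 8]{B06} or the references therein) endows the coset space $\U_\Jc/\U_\Jc(a)$ with a unique real analytic Banach manifold structure for which the quotient map $\U_\Jc\to\U_\Jc/\U_\Jc(a)$ is an analytic submersion admitting local analytic sections near the base point. Transporting this structure along the bijection \eqref{bij} gives $\Oc$ the structure of a real analytic Banach manifold. Smoothness of the action $A(u,b)=ubu^*$ follows since, in the quotient picture, it corresponds to the left translation action of $\U_\Jc$ on $\U_\Jc/\U_\Jc(a)$, which is analytic by construction; alternatively, $A$ lifts through the polynomial map $(u,b)\mapsto ubu^*$ on the Banach algebra $\CC\1+\Jc$, and this lift descends to an analytic map on $\U_\Jc\times\Oc$.

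The main obstacle is the first step—securing that $E$ actually maps $\Jc$ into $\Jc$ continuously so that the splitting lives inside $\ug_\Jc$ rather than only inside $\ug_{\Jc^M}$; this is precisely where the am-closed hypothesis is used (without it, one only has $E(\Jc)\subseteq\Jc^M$ and the stabilizer could fail to be a Lie subgroup in the $\Vert\cdot\Vert_\Jc$-topology).
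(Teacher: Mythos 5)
Your proof is correct and follows essentially the same route as the paper: the am-closed hypothesis gives a bounded idempotent $E\vert_{\ug_\Jc}$ whose kernel complements $\lie(\U_\Jc(a))$ (via Lemma~\ref{splits}\eqref{splits_item1}), the Harris-Kaup theorem on algebraic subgroups handles the Banach-Lie structure of the stabilizer, and the standard homogeneous-space theory for Banach-Lie groups finishes part~\eqref{manifold_item3}. The only cosmetic difference is that you establish the split decomposition before invoking HK77, whereas the paper proves~\eqref{manifold_item1} first; the content is the same.
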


\begin{proof}
\eqref{manifold_item1} 
From the very definitions, it follows that $U_{\Jc}(a)$ is an algebraic subgroup of the group of invertible operators in $\CC\1+\Jc$ (see \cite{HK77} and \cite[Th. 4.13]{B06}), therefore  $U_{\Jc}(a)$ is a Banach-Lie group with its topology inherited from  $\CC\1+\Jc$, just like $U_{\Jc}$ does. 

\eqref{manifold_item2} We have  $\Ker((\id_{\Bc(\Hc)}-E)\vert_{\ie\Jc})=\Ker(\delta_a\vert_{\ie\Jc})=\lie(U_{\Jc}(a))$ by Lemma \ref{splits}\eqref{splits_item1} applied for $\Ic=\Jc$. 
But since we are now assuming that $\Jc$ is am-closed, we have $E(\Jc)\subseteq \Jc$ and since the range and kernel of $E$ are each other's direct complement ($E$ is an idempotent) the claim follows. 
Hence $U_{\Jc}(a)$ is a Banach-Lie subgroup of $U_{\Jc}$ with split sub-algebra.

\eqref{manifold_item3} 
 Assertion~\eqref{manifold_item2} directly implies that $\U_{\Jc}/\U_{\Jc}(a)$ is a smooth homogeneous space for the action described above (see \cite[Th. 4.19]{B06}), 
 and then the orbit $\Oc_\Jc(a)$ is turned into a smooth homogeneous space via the bijection~\eqref{bij}.
\end{proof}

\begin{remark}\label{amclosed}
In Theorem~\ref{manifold}, the orbit $\Oc_{\Jc}(a)$ has a smooth manifold structure when $\Jc$ is am-closed, regardless of~$\spec(a)$. 
Is that hypothesis on $\Jc$ really necessary? 
It is only used to find a direct complement to the Lie algebra of the isotropy group.
\end{remark}

\subsection{Dual pairs}\label{dualpairs}

We now prove a version of the above Theorem~\ref{manifold} in which the 
normal operator $a$ is not necessarily compact, 
in the setting provided by dual pairs of ideals as already used in Lemma~\ref{au}.

The following lemma is needed in the proofs of Proposition~\ref{manifold_bdd} and Lemma~\ref{BR05_Th2.2}. 

\begin{lemma}\label{averaging}
Let $\Xc$ and $\Xc_1$ be real Banach spaces endowed with a bilinear functional 
$\langle\cdot,\cdot\rangle\colon\Xc\times\Xc_1\to\RR$ 
that gives rise gives rise to a topological isomorphism of the Banach space $\Xc$ onto the topological dual of the Banach space $\Xc_1$. 
Moreover, we assume that a norm that defines the topology of $\Xc_1$ has been fixed, and $(S,+)$ is an abelian semigroup with a mapping 
$\alpha\colon S\to\Bc(\Xc)$, $s\mapsto \alpha^s$, satisfying the following conditions: 
\begin{itemize}
	\item For all $s,t\in S$ one has $\alpha^{s+t}=\alpha^s\alpha^t$. 
	\item One has $M:=\sup\limits_{s\in S}\Vert\alpha^s\Vert<\infty$. 
	\item For every $s\in S$ there exists $\alpha^s_*\in\Bc(\Xc_1)$ with $\langle \alpha^s(x),x_1\rangle=\langle x,\alpha^s_*(x_1)\rangle$ for all $x\in\Xc$ and $x_1\in\Xc_1$. 
\end{itemize}
If we denote $\Xc^S:=\{x\in\Xc\mid (\forall s\in S)\quad \alpha^s(x)=x\}$, 
then there exists $\widetilde{E}\in\Bc(\Xc)$ with $\widetilde{E}^2=\widetilde{E}$, $\Vert \widetilde{E}\Vert\le M$, and $\widetilde{E}(\Xc)=\Xc^S$. 
\end{lemma}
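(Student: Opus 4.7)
The plan is to exploit the amenability of the abelian semigroup $(S,+)$ to produce a bounded linear averaging operator $\widetilde E$ by integration against an invariant mean. By Day's theorem there exists an invariant mean $\mu\in\ell^\infty(S)^*$, that is, a state with $\mu(1_S)=1$ and $\mu(f(\cdot+t))=\mu(f)$ for every $f\in\ell^\infty(S)$ and every $t\in S$. The key idea is to define $\widetilde E(x)\in\Xc$ as the unique element representing the bounded linear functional on $\Xc_1$ given by $x_1\mapsto \mu\bigl(s\mapsto\langle\alpha^s(x),x_1\rangle\bigr)$, via the identification $\Xc\simeq\Xc_1^*$ provided by the pairing.

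Concretely, for each $x\in\Xc$ and $x_1\in\Xc_1$ the function $\varphi_{x,x_1}(s):=\langle\alpha^s(x),x_1\rangle$ on $S$ is bounded in supremum norm by $M\Vert x\Vert\,\Vert x_1\Vert$ by hypothesis, so the linear form $L_x\colon x_1\mapsto\mu(\varphi_{x,x_1})$ satisfies $\Vert L_x\Vert\le M\Vert x\Vert$ on $\Xc_1$. Under the identification $\Xc\simeq\Xc_1^*$, $L_x$ is represented by a unique $\widetilde E(x)\in\Xc$ with $\Vert\widetilde E(x)\Vert\le M\Vert x\Vert$, and linearity of $\widetilde E$ in $x$ follows from linearity of $\mu$ and of the pairing. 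The remaining properties are then verified in turn: first, $\alpha^t\widetilde E(x)=\widetilde E(x)$ for every $t\in S$, since
$$\langle\alpha^t\widetilde E(x),x_1\rangle
=\langle\widetilde E(x),\alpha^t_*(x_1)\rangle
=\mu\bigl(s\mapsto\langle\alpha^{s+t}(x),x_1\rangle\bigr)
=\mu(\varphi_{x,x_1})
=\langle\widetilde E(x),x_1\rangle$$
by translation invariance of $\mu$, hence $\widetilde E(\Xc)\subseteq\Xc^S$; second, if $x\in\Xc^S$ then $\varphi_{x,x_1}$ is the constant $\langle x,x_1\rangle$, so $\mu(\varphi_{x,x_1})=\langle x,x_1\rangle$ and thus $\widetilde E(x)=x$. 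Combining these two observations yields $\widetilde E(\Xc)=\Xc^S$ and $\widetilde E^2=\widetilde E$.

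The main conceptual step, rather than a technical obstacle, is the appeal to amenability of abelian semigroups; everything else is bookkeeping in the duality $\langle\cdot,\cdot\rangle$. The hypothesis that each $\alpha^s$ admits a preadjoint $\alpha^s_*\in\Bc(\Xc_1)$ is used exactly in the first displayed chain of equalities, where it converts translation invariance of $\mu$ on $S$ into $S$-invariance of $\widetilde E(x)$ inside $\Xc$; without this hypothesis $L_x$ could still be defined but would not a priori take values in $\Xc^S$. A mean-free alternative would be to extract $\widetilde E$ as a point-weak-$*$ cluster point, in the closed ball of radius $M$ in $\Bc(\Xc)$, of finite convex combinations $\tfrac{1}{n}\sum_{i=1}^n\alpha^{s_i}$, but recovering linearity of the cluster point and accommodating arbitrary non-countable $S$ makes the invariant-mean route cleaner.
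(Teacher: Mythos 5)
Your proposal is correct and follows essentially the same route as the paper: both use the existence of an invariant mean on the abelian semigroup $S$ (you cite Day, the paper cites Dixmier, but this is the same amenability fact), define $\widetilde{E}(x)$ as the element of $\Xc\simeq\Xc_1^*$ representing $x_1\mapsto\mu\bigl(s\mapsto\langle\alpha^s(x),x_1\rangle\bigr)$, and verify idempotence by showing $\widetilde{E}(\Xc)\subseteq\Xc^S$ via the preadjoint hypothesis together with translation invariance of $\mu$, and $\widetilde{E}\vert_{\Xc^S}=\id$ because the relevant functions are then constant. Your closing remark on the role of the preadjoint hypothesis and on the possible ultrafilter/cluster-point alternative is a faithful commentary but not a deviation.
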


\begin{proof}
We adapt the method of proof of \cite[Th. 3.1(a)]{BePr07} to \emph{real} Banach spaces. 
To this end let $D\colon \Xc\to\Xc_1^*$, $x\mapsto \langle x,\cdot\rangle$, be the topological isomorphism that exists by hypothesis. 
Also let $\ell^\infty_{\CC}(S)$ be the commutative $C^*$-algebra consisting of all bounded functions $\xi\colon S\to\CC$, and for every $t\in S$ define $L_t\colon\ell^\infty_{\CC}(S)\to\ell^\infty_{\CC}(S)$, $(L_t\xi)(s):=\xi(t+s)$. 
Since $S$ is an abelian semigroup, it is well known that it has an invariant mean. 
(See \cite[Th. 2$(\alpha)$]{Di50}.) 
That is, the $C^*$-algebra $\ell^\infty_{\CC}(S)$ has a state $\mu\colon \ell^\infty_{\CC}(S)\to\CC$ satisfying $\mu\circ L_t=\mu$ for all $t\in T$. 

Now let $\ell^\infty_{\Xc}(S)$ be the real Banach space of all bounded functions $f\colon S\to\Xc$ and, using the hypothesis, note that the bounded linear operator 
$$\iota\colon\Xc\to \ell^\infty_{\Xc}(S), \quad (\iota(x))(s):=\alpha^s(x)$$ is well defined and $\Vert\iota\Vert\le M$. 
Moreover, for arbitrary $t\in S$ and $x_1\in\Xc_1$ one has 
$\langle(\iota(x))(\cdot),x_1\rangle\in \ell^\infty_{\CC}(S)$ 
and 
\begin{equation}\label{averaging_proof_eq1}
L_t(\langle(\iota(x))(\cdot),x_1\rangle)
=\langle(\iota(x))(t+\cdot),x_1\rangle
=\langle \alpha^t(\iota(x))(\cdot),x_1\rangle.
\end{equation} 
We then define 
$$\widetilde{E}\colon \Xc\to\Xc,\quad 
\langle \widetilde{E}(x),x_1\rangle:=\mu(\langle(\iota(x))(\cdot),x_1\rangle).$$
Since $D\colon \Xc\to\Xc_1^*$ is a topological isomorphism, it follows that $\widetilde{E}\colon\Xc\to\Xc$ is a bounded linear operator with $\Vert \widetilde{E}\Vert\le M$. 

If $x\in \Xc^S$, then $\iota(x)\colon S\to \Xc$ is the constant function that is equal to $x$ everywhere on $S$, hence, using $\mu(\1)=1$ 
(where $\1\in\ell^\infty_{\CC}(S)$ is the unit element), we easily obtain $\widetilde{E}(x)=x$. 
On the other hand, for arbitrary $t\in S$, using the property $\mu\circ L_t=\mu$, 
one obtains 
\begin{align*}
\langle \alpha^t(\widetilde{E}(x)),x_1\rangle
&=\langle \widetilde{E}(x),\alpha^t_*(x_1)\rangle \\
&=\mu(\langle(\iota(x))(\cdot),\alpha^t_*(x_1)\rangle) \\
&=\mu(\langle \alpha^t((\iota(x))(\cdot)),x_1\rangle) \\
&=\mu(L_t(\langle(\iota(x))(\cdot),x_1\rangle)) \\
&=\mu(\langle(\iota(x))(\cdot),x_1\rangle) \\
&= \langle \widetilde{E}(x),x_1\rangle
\end{align*}
where for the fourth and fifth equalities we have used \eqref{averaging_proof_eq1} and $\mu\circ L_t=L_t$, respectively. 
Consequently $\alpha^t\circ \widetilde{E}=\widetilde{E}$. 
Thus $\widetilde{E}(\Xc)\subseteq \Xc^S$ and $\widetilde{E}(x)=x$ for every $x\in\Xc^S$, 
and then $\widetilde{E}(\widetilde{E}(x))=x$ for every $x\in\Xc$ and moreover $E(\Xc)=\Xc^S$, which completes the proof. 
\end{proof}

The following fact is a simultaneous extension of \cite[Th. 9.29(ii)]{B06} from admissible pairs to dual pairs and of \cite[Prop. 4.5]{BR05} to the non-reflexive case.

\begin{proposition}\label{manifold_bdd}
If $(\Jc,\Jc_1)$ is a dual pair of ideals and $a\in\Bc(\Hc)^\nor$, 
then the following assertions hold. 
\begin{enumerate}[{\rm(i)}]
\item\label{manifold_bdd_item1} There exists a bounded linear operator $\widetilde{E}\colon \ug_{\Jc}\to \ug_{\Jc}$ with $\widetilde{E}^2=\widetilde{E}$ and $\widetilde{E}(\ug_{\Jc})=\lie(U_{\Jc}(a))$, thus
$$
\ug_{\Jc}=\lie(U_{\Jc}(a))\oplus \Ker\widetilde{E}.
$$
\item\label{manifold_bdd_item2} $U_{\Jc}(a)$ is a Banach-Lie subgroup of $U_{\Jc}$. 
\end{enumerate}
\end{proposition}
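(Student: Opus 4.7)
The plan is to invoke Lemma~\ref{averaging} with $\Xc := \ug_\Jc$, $\Xc_1 := \ug_{\Jc_1}$, and an abelian semigroup action arising from conjugation by the unitary two-parameter group associated to the Cartesian decomposition of~$a$. Since $a$ is normal, the self-adjoint operators $b := (a+a^*)/2$ and $c := (a-a^*)/(2\ie)$ commute. Setting $S := \RR^2$ (additive) and
$$
\alpha^{(s,t)}(x) := \ee^{\ie s b}\ee^{\ie t c}\, x\, \ee^{-\ie t c}\ee^{-\ie s b}
$$
gives a semigroup homomorphism $S \to \Bc(\ug_\Jc)$. Each $\alpha^{(s,t)}$ preserves $\ug_\Jc$ (conjugation by a unitary preserves skew-adjointness) and satisfies $\Vert\alpha^{(s,t)}\Vert = 1$, so the constant $M$ in Lemma~\ref{averaging} is equal to~$1$.

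Next, I would extract the duality hypothesis from~\eqref{duality} by restricting to skew-adjoint parts. For $\xi\in\Jc^\sa$ and $\eta\in\Jc_1^\sa$, cyclicity of the trace and self-adjointness yield $\Tr(\xi\eta) \in \RR$, so the real-bilinear pairing $(x,y)\mapsto \operatorname{Re}\Tr(xy)$ on $\Jc \times \Jc_1$ vanishes on the cross terms $\Jc^\sa \times \ug_{\Jc_1}$ and $\ug_\Jc \times \Jc_1^\sa$. Since \eqref{duality} identifies $\Jc$ with the complex topological dual of~$\Jc_1$, and hence with its real topological dual via $\operatorname{Re}\Tr$, the real direct-sum decompositions $\Jc = \Jc^\sa \oplus \ug_\Jc$ and $\Jc_1 = \Jc_1^\sa \oplus \ug_{\Jc_1}$ induce a topological isomorphism $\ug_\Jc \simeq (\ug_{\Jc_1})^*$ via the $\RR$-valued pairing $\langle x, y\rangle := -\Tr(xy)$. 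The pre-adjoint required by Lemma~\ref{averaging} is then $\alpha^{(s,t)}_*(y) := \ee^{-\ie s b}\ee^{-\ie t c}\, y\, \ee^{\ie t c}\ee^{\ie s b}$ on $\ug_{\Jc_1}$, as verified by cyclicity of the trace.

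Differentiating $\alpha^{(s,t)}(x) = x$ at the origin reduces $\Xc^S$ to those $x\in\ug_\Jc$ with $[b,x] = [c,x] = 0$, i.e.\ $[a,x] = 0$ (the companion relation $[a^*,x] = 0$ is automatic from skew-adjointness of~$x$). Hence $\Xc^S = \Ker(\delta_a\vert_{\ug_\Jc}) = \lie(\U_\Jc(a))$, and Lemma~\ref{averaging} produces the required idempotent $\widetilde{E}\in\Bc(\ug_\Jc)$ with $\widetilde{E}(\ug_\Jc) = \lie(\U_\Jc(a))$, settling~\eqref{manifold_bdd_item1}. For \eqref{manifold_bdd_item2}, the defining equation $uau^* = a$ is polynomial in~$u$, so $\U_\Jc(a)$ is an algebraic subgroup of the group of invertible elements of $\CC\1 + \Jc$; since its Lie algebra is complemented in $\ug_\Jc$ by~\eqref{manifold_bdd_item1}, the Harris--Kaup theorem (cf.~\cite{HK77} and \cite[Th.~4.13]{B06}) provides the Banach-Lie subgroup structure. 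The main obstacle is the second step: transferring the complex duality~\eqref{duality} to a \emph{real} topological duality between $\ug_\Jc$ and $\ug_{\Jc_1}$, which hinges on the vanishing of the cross pairings under $\operatorname{Re}\Tr$ and constitutes the technical bridge that makes Lemma~\ref{averaging} applicable in this setting.
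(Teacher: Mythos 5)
Your proof takes the same route as the paper's---apply Lemma~\ref{averaging} with $S=(\RR^2,+)$ acting on $\ug_\Jc$ by conjugation, identify the fixed-point set $\Xc^S$ with $\lie(\U_\Jc(a))$, and conclude~(ii) by the Harris--Kaup algebraic-subgroup argument exactly as in Theorem~\ref{manifold}---so the two proofs are essentially identical. Two details you fill in are both correct and worth noting: you use the genuinely unitary group $\ee^{\ie sb}\ee^{\ie tc}$ from the Cartesian decomposition in place of the paper's $\ee^{\ie(sa+ta^*)}$, which is not unitary when $s\ne t$ and $a\ne a^*$ (so the bound $M<\infty$ required by Lemma~\ref{averaging} would fail as written), matching the formulation the paper itself adopts in Lemma~\ref{BR05_Th2.2}; and you verify explicitly that the complex duality~\eqref{duality} restricts, via $\operatorname{Re}\Tr$ and the vanishing of the cross pairings $\Jc^\sa\times\ug_{\Jc_1}$ and $\ug_\Jc\times\Jc_1^\sa$, to the real topological duality $\ug_\Jc\simeq(\ug_{\Jc_1})^*$ that Lemma~\ref{averaging} actually requires, a step the paper leaves implicit.
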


\begin{proof}
	\eqref{manifold_bdd_item1} 
For every $(s,t)\in\RR^2$ we define  
$$\alpha_{(s,t)}\colon\Bc(\Hc)\to \Bc(\Hc), \quad \alpha_{(s,t)}(x)=\ee^{\ie (sa+ta^*)}x\ee^{-\ie  (sa+ta^*)},$$  
and it is clear that $\alpha_{(s,t)}\vert_{\Jc}=(\alpha_{(s,t)}\vert_{\Jc_1})^*$ with respect to the duality $\Jc\simeq\Jc_1^*$ given by \eqref{duality}. 
Moreover, 
$$\lie(U_{\Jc}(a))=\{x\in\ug_{\Jc}\mid (\forall (s,t)\in\RR^2)\quad  \alpha_{(s,t)}(a)=a\}.$$
It then follows by Lemma~\ref{averaging} applied for the additive (semi-)group $S=(\RR^2,+)$ that there exists a bounded linear operator $\widetilde{E}\colon \ug_{\Jc}\to \ug_{\Jc}$ with $\widetilde{E}^2=\widetilde{E}$ and $\widetilde{E}(\ug_{\Jc})=\lie(U_{\Jc}(a))$, hence $\Ker \widetilde{E}$ is a direct complement to the Lie algebra of $U_{\Jc}(a)$ in the Lie algebra of $U_{\Jc}$. 

\eqref{manifold_bdd_item2}
This has the same proof as in Theorem~\ref{manifold}. 
\end{proof}

Regarding examples to which Proposition~\ref{manifold_bdd} applies, 
we mention that for every symmetric norming function $\Phi$ with its dual symmetric norming function $\Phi^*$, the pair of symmetrically normed operator ideals $(\Sg_{\Phi^*},\Sg_\Phi^{(0)})$ is a dual pair in the above sense. 
(See for instance \cite[Rem. 4.2(iii)]{BR05} and the references therein.)

With the same proof as in Theorem \ref{manifold}, and by means of Proposition \ref{manifold_bdd}, we obtain the differentiable structure of the orbit as a corollary:

\begin{corollary}
If $(\Jc,\Jc_1)$ is a dual pair of ideals and $a\in\Bc(\Hc)^\nor$, then the orbit $\Oc:=\Oc_{\Jc}(a)$ is a real analytic Banach manifold, and a homogeneous space for the smooth action of $A:U_{\Jc}\times \Oc\to \Oc$, $A(u,b)=ubu^*$.
\end{corollary}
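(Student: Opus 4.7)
The plan is to imitate the proof of Theorem~\ref{manifold}\eqref{manifold_item3}, with Proposition~\ref{manifold_bdd} supplying exactly the two ingredients that Theorem~\ref{manifold} parts \eqref{manifold_item1}--\eqref{manifold_item2} provided in the am-closed compact case: namely, that $\U_{\Jc}(a)$ is a Banach--Lie subgroup of $\U_{\Jc}$, and that its Lie algebra admits a topological direct complement in $\ug_{\Jc}$.

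First I would record that by Proposition~\ref{manifold_bdd}\eqref{manifold_bdd_item2} the isotropy group $\U_{\Jc}(a)$ is a Banach--Lie subgroup of the Banach--Lie group $\U_{\Jc}$, and by Proposition~\ref{manifold_bdd}\eqref{manifold_bdd_item1} there is a continuous idempotent $\widetilde{E}\colon \ug_{\Jc}\to \ug_{\Jc}$ whose range is $\lie(\U_{\Jc}(a))$, so that $\ug_{\Jc}=\lie(\U_{\Jc}(a))\oplus \Ker\widetilde{E}$ as a topological direct sum of closed subspaces. These are precisely the hypotheses required by the general construction of quotient Banach--Lie homogeneous spaces, so \cite[Th.~4.19]{B06} applies and yields that the left coset space $\U_{\Jc}/\U_{\Jc}(a)$ carries a unique real analytic Banach manifold structure making the quotient map $\U_{\Jc}\to \U_{\Jc}/\U_{\Jc}(a)$ a real analytic principal $\U_{\Jc}(a)$-bundle, and for which the canonical left action of $\U_{\Jc}$ on $\U_{\Jc}/\U_{\Jc}(a)$ is real analytic.

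Next I would transport this structure to the orbit through the bijection \eqref{bij}, that is, $u\,\U_{\Jc}(a)\mapsto uau^{*}$. Declaring this map to be a real analytic diffeomorphism defines on $\Oc=\Oc_{\Jc}(a)$ a real analytic Banach manifold structure for which the orbit map $\pi^{a}_{\Jc}\colon \U_{\Jc}\to \Oc$, $u\mapsto uau^{*}$, factors as the composition of the (analytic) quotient map with this diffeomorphism, hence is itself a real analytic submersion. Equivariance of \eqref{bij} for the left $\U_{\Jc}$-action translates the analyticity of the action on $\U_{\Jc}/\U_{\Jc}(a)$ into the desired smoothness of $A\colon\U_{\Jc}\times\Oc\to\Oc$, $A(u,b)=ubu^{*}$, making $\Oc$ a homogeneous space under $\U_{\Jc}$.

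I do not expect a genuine obstacle in this step: the nontrivial work is entirely hidden in Proposition~\ref{manifold_bdd}, where the mean-averaging argument of Lemma~\ref{averaging} over the two-parameter group $\ee^{\ie(sa+ta^{*})}$ produced the splitting projection $\widetilde{E}$ without needing $a$ to be compact or $\Jc$ to be am-closed. One minor point to verify carefully is that the quotient-manifold machinery of \cite[Th.~4.19]{B06} requires only topological complementation (which we have) rather than an orthogonal-type complement, and that the induced smooth structure on $\Oc$ is independent of the choice of complement $\Ker\widetilde{E}$; both are standard and follow from the universal property of the quotient manifold. No further ingredient is needed.
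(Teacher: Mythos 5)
Your proposal is correct and follows exactly the route the paper intends: it transplants the argument of Theorem~\ref{manifold}\eqref{manifold_item3}, with Proposition~\ref{manifold_bdd} supplying the Banach--Lie subgroup property and the topological complement to $\lie(\U_{\Jc}(a))$, and then invokes \cite[Th.~4.19]{B06} together with the bijection \eqref{bij} to transport the quotient manifold structure to the orbit.
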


\section{Closures of unitary orbits}
\label{sec:closure}

In this section, we examine the closures of unitary orbits for the different topologies, namely the ones defined by the operator norm or by various symmetric ideal norms (Theorem~\ref{closures_th}). 
We recall that the ideal of finite rank operators $\Fc(\Hc)$ is dense in any separable symmetrically normed ideal $\Ic\subseteq\Bc(\Hc)$.

\begin{remark}\label{normclosure}
If $a,b\in\Bc(\Hc)^{\nor}$, then $b\in\overline{\Oc_{\Bc(\Hc)}(a)}^{\Vert\cdot\Vert}$ if and only if the following conditions are satisfied: 
	\begin{enumerate}[{\rm(i)}]
		\item One has $\spec(a)=\spec(b)$. 
		\item Each isolated point of $\spec(a)$ has the same multiplicity as an eigenvalue of~$a$ and as an eigenvalue 
		of~$b$.
	\end{enumerate}
	These conditions are further equivalent to the condition that if $E^a(\cdot)$ and $E^b(\cdot)$ are the spectral measures of $a$ and $b$, respectively, then for every open subset $D\subseteq\CC$ one has $\rank E^a(D)=\rank E^b(D)$. See {\cite[Th.\ 1]{GePa74}, \cite[Rem.\ on page 219]{Fi77}, \cite[Th.\ 1.1]{Sh07}} for proofs of these facts.
\end{remark}

\begin{definition}\label{rangeproj}
	For every operator $a\in\Bc(\Hc)^{\nor}$, we denote by $p_a$  the orthogonal projection onto the closure of the range of $a$, that is $p_a=p_{\overline{\Ran(a)}}$. 
	Since $a\in\Bc(\Hc)^{\nor}$, we have 
	\begin{equation}
	\label{rangeproj_eq1}
	p_a=p_{\vert a\vert }=E^a(\CC)
	\end{equation}
	where $E^a(\cdot)$ is the spectral measure of~$a$.  
	It will be useful to consider the set of partial isometries $\Vc(\Hc)$ acting in $\Hc$, and also the subset of partial isometries with fixed initial space 
$$
\Vc_a=\{v\in \Vc(\Hc): v^*v=p_a\},
$$
a closed subset of $\Bc(\Hc)$.
\end{definition}

\begin{lemma}\label{grpd}
Let $a\in \Kc(\Hc)^{\nor}$. 
Then  $b\in\overline{\Oc_{\Bc(\Hc)}(a)}^{\Vert\cdot\Vert}$ if and only if there exists $v\in \Vc_a$ with $b=vav^*$. 
In that case, $a=v^*bv$ and $vv^*=p_b$.
\end{lemma}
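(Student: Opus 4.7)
The plan is to verify the two directions using the spectral characterization of $\overline{\Oc_{\Bc(\Hc)}(a)}^{\Vert\cdot\Vert}$ recalled in Remark~\ref{normclosure}, and then to check the supplementary identities $a=v^*bv$ and $vv^*=p_b$ by direct computation with the range projection.

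First I would dispose of the supplementary identities assuming $b=vav^*$ with $v\in\Vc_a$. Since $a$ is normal, $\ker a=(\Ran a)^{\perp}$, hence $p_a$ commutes with $a$ and $ap_a=p_aa=a$. Therefore
$$v^*bv=v^*vav^*v=p_aap_a=a.$$
Next, since $v$ restricts to an isometry on $p_a\Hc=\overline{\Ran a}$, the image $v(p_a\Hc)=vv^*\Hc$ is closed, and
$$\overline{\Ran b}=\overline{v(ap_a\Hc)}=v(\overline{\Ran a})=v(p_a\Hc)=vv^*\Hc,$$
whence $p_b=vv^*$.

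For the direction $(\Leftarrow)$, assume $b=vav^*$ with $v^*v=p_a$. Normality of $b$ follows from
$$bb^*=vaa^*v^*=va^*av^*=b^*b,$$
where I used $p_aaa^*=aa^*p_a=aa^*$. The map $v|_{p_a\Hc}\colon p_a\Hc\to vv^*\Hc$ is a Hilbert space isomorphism intertwining $a|_{p_a\Hc}$ with $b|_{vv^*\Hc}$, so the nonzero spectral projections of $a$ and $b$ have the same (finite) rank for each nonzero $\lambda\in\spec(a)=\spec(b)\setminus\{0\}$. It then remains to handle $0$. If $\spec(a)$ is infinite, then $0$ is a non-isolated point of $\spec(a)=\spec(b)$ and condition (ii) of Remark~\ref{normclosure} is vacuous there. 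If $\spec(a)$ is finite, then $p_a$ has finite rank~$n$, so $vv^*$ also has rank $n$, and both $\ker a$ and $\ker b$ are infinite-dimensional since $\Hc$ is; so the multiplicity of the isolated eigenvalue $0$ matches. By Remark~\ref{normclosure}, $b\in\overline{\Oc_{\Bc(\Hc)}(a)}^{\Vert\cdot\Vert}$.

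For $(\Rightarrow)$, assume the spectral conditions of Remark~\ref{normclosure}. Diagonalize both $a$ and $b$ on their range projections: choose orthonormal bases $(e_k)_k$ of $p_a\Hc$ and $(f_k)_k$ of $p_b\Hc$ indexing the nonzero eigenvalues of $a$ and $b$ (counted with multiplicity), so that $a=\sum_k\lambda_k\,e_k\otimes\overline{e_k}$ and $b=\sum_k\mu_k\,f_k\otimes\overline{f_k}$. Since the nonzero spectrum with multiplicity agrees, relabel to arrange $\lambda_k=\mu_k$. Define $v\in\Bc(\Hc)$ by $ve_k=f_k$ on $p_a\Hc$ and $v=0$ on $(p_a\Hc)^{\perp}$; this is a partial isometry with $v^*v=p_a$ (so $v\in\Vc_a$) and $vv^*=p_b$. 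A direct computation gives $vav^*=\sum_k\lambda_k f_k\otimes\overline{f_k}=b$, completing the proof.

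The only genuine obstacle is the bookkeeping for the eigenvalue $0$ in the $(\Leftarrow)$ direction, which separates cleanly into the finite- and infinite-spectrum cases as above; everything else is routine once one uses that $p_a=p_{\overline{\Ran a}}$ commutes with $a$ and that $v|_{p_a\Hc}$ is an isometric identification $p_a\Hc\xrightarrow{\sim} vv^*\Hc$.
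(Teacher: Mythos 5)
Your proof is correct and follows essentially the same route as the paper, resting on the spectral characterization in Remark~\ref{normclosure} together with diagonalization of $a$ and $b$ on their range projections. You are somewhat more explicit than the paper at two points that the paper glosses over: in the forward direction you actually construct $v$ by matching eigenbases (rather than merely asserting the existence of an isometry $\overline{\Ran a}\to\overline{\Ran b}$), and in the converse direction you separately address the multiplicity of the eigenvalue $0$ according to whether $\spec(a)$ is finite or infinite.
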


\begin{proof}
If $b\in\overline{\Oc_{\Bc(\Hc)}(a)}^{\Vert\cdot\Vert}$ then 
$\rank E^a(\CC)=\rank E^b(\CC)$ by Remark~\ref{normclosure}. 
Hence, by~\eqref{rangeproj_eq1}, there exists an isometry $v$ from the (closure of) the range of $a$ to the (closure of) the range of $b$; extending it as $0$ on $(\Ran a)^\perp=\Ker a^*=\Ker a$, we obtain a partial isometry such that $v^*v=p_a$ and $b=vav^*$. 

Conversely, if $b=vav^*$ for some partial isometry $v$ with $v^*v=p_a$, and $vv^*=p_b$ then for each eigenvalue of $a$, if $p_k$ is the corresponding eigen-projection, let $q_k=vp_kv^*$. 
Clearly $bq_k=vav^*vp_kv^*=vap_kv^*=\lambda_k q_k$.  
Thus, the operator $b$ has the same eigenvalues with the same multiplicity as $a$, and by 
Remark~\ref{normclosure}, 
we obtain $b\in\overline{\Oc_{\Bc(\Hc)}(a)}^{\Vert\cdot\Vert}$. 
Finally, since $v^*bv=v^*vav^*v=a$, the range of $vv^*$ is the closure of the range of~$b$.
\end{proof}

In the notation of Lemma~\ref{grpd}, 
$$
\Ker v=\Ker a, \; \Ran v=\overline{\Ran b}, \quad \Ker v^*=\Ker b,\; \Ran v^*=\overline{\Ran a},
$$
and then the uniform norm closure of the full unitary orbit of $a$ if obtained by acting with partial isometries $v$ with \textit{initial space} equal to the (closure of) the range of $a$.

\begin{proposition}\label{9june2018}
If $\Ic\subsetneqq\Bc(\Hc)$ is an operator ideal, and $a\in\Ic^\nor$, then   we have 
$\overline{\Oc_{\Bc(\Hc)}(a)}^{\Vert\cdot\Vert}\subseteq\Ic$. 
\end{proposition}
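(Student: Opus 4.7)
The plan is to reduce this to a direct application of Lemma~\ref{grpd} combined with the classical structural fact that every proper two-sided ideal of $\Bc(\Hc)$ is contained in the compact operators.

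First, I would invoke the fact that $\Kc(\Hc)$ is a maximal two-sided ideal of $\Bc(\Hc)$ (the Calkin algebra $\Bc(\Hc)/\Kc(\Hc)$ is simple). Since $\Ic\subsetneqq\Bc(\Hc)$ is a two-sided ideal, this gives the inclusion $\Ic\subseteq\Kc(\Hc)$. Consequently $a\in\Ic^{\nor}\subseteq\Kc(\Hc)^{\nor}$, placing us in the setting where Lemma~\ref{grpd} applies.

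Next, given any $b\in\overline{\Oc_{\Bc(\Hc)}(a)}^{\Vert\cdot\Vert}$, Lemma~\ref{grpd} supplies a partial isometry $v\in\Vc_a$ with $b=vav^*$. Since $\Ic$ is a two-sided ideal of $\Bc(\Hc)$, the element $a\in\Ic$ absorbs arbitrary bounded multipliers on both sides, so $vav^*\in\Ic$. This gives $b\in\Ic$, which is the desired conclusion.

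The argument is essentially immediate once Lemma~\ref{grpd} has been established; the only subtlety is noting that the hypothesis $\Ic\subsetneqq\Bc(\Hc)$ (rather than merely that $\Ic$ is a symmetrically normed ideal) is exactly what is needed to force $a$ to be compact, which in turn is what allows the invocation of Lemma~\ref{grpd}. There is no serious obstacle here; the statement should be recorded as a short corollary of Lemma~\ref{grpd} together with the maximality of $\Kc(\Hc)$.
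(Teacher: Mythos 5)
Your argument is exactly the paper's Proof~1, which simply says ``Use Lemma~\ref{grpd}''; you have supplied the (correct) details that the paper leaves implicit, namely the reduction $\Ic\subsetneqq\Bc(\Hc)\Rightarrow\Ic\subseteq\Kc(\Hc)$ so that Lemma~\ref{grpd} applies, and the observation that $vav^*\in\Ic$ by the two-sided ideal property. One small imprecision: simplicity of the Calkin algebra gives that $\Kc(\Hc)$ is maximal among \emph{norm-closed} two-sided ideals, whereas you need the (also standard, but slightly stronger) fact that \emph{every} proper two-sided ideal of $\Bc(\Hc)$ lies in $\Kc(\Hc)$; that follows from Douglas' lemma applied to a spectral projection of $|a|$, not directly from the simplicity of the quotient.
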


\begin{proof}[Proof 1]
Use Lemma~\ref{grpd}. 
\end{proof}

\begin{proof}[Proof 2]
Let $u_n\in\Bc(\Hc)$ be a sequence of unitary operators with  $\lim\limits_{n\to\infty}u_n^*au_n=b$ in the operator norm topology. 
Then, by the well-known continuity property of the functional calculus with respect to sequences of normal operators, for every continuous function $f\colon\CC\to\CC$ one has 
$$
\lim\limits_{n\to\infty}u_n^*f(a)u_n=\lim\limits_{n\to\infty}f(u_n^*au_n)=f(b)
$$
in the operator norm topology.  In particular, for $f(\cdot)=\vert\cdot\vert$, we obtain $\vert b\vert\in\overline{\Oc_{\Bc(\Hc)}(\vert a\vert)}^{\Vert\cdot\Vert}$. It then follows by Remark~\ref{normclosure} that the sequences of singular numbers of the operators $a$ and $b$ coincide: $s_n(b)=s_n(a)$ for every $n\ge 1$. Since $a\in\Ic$, it now follows that $b\in\Ic$, as explained at the top of \cite[page 26]{Sch60}. 
\end{proof}

To study the unitary orbits for the action of the groups $\U_{\Jc}$, we begin by noting that two finite dimensional subspaces are conjugated by a special unitary if they have the same dimension. 

\begin{lemma}\label{8june2018}
For any linear subspaces $\Ec_1,\Ec_2\subseteq\Hc$ with $\dim\Ec_1=\dim\Ec_2<\infty$, 
there exists $u\in\U_{\Fc(\Hc)}(\Hc)$ with $u\Ec_1=\Ec_2$. 
\end{lemma}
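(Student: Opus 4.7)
The plan is to construct $u$ explicitly as an operator that acts unitarily on a suitable finite-dimensional subspace containing both $\Ec_1$ and $\Ec_2$, and as the identity on its orthogonal complement; then $u - \1$ will automatically be finite rank, placing $u$ in $\U_{\Fc(\Hc)}$.

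First I would set $n := \dim\Ec_1 = \dim\Ec_2$ and introduce the finite-dimensional subspace $\Ec := \Ec_1 + \Ec_2 \subseteq \Hc$, of dimension $m$ with $n \le m \le 2n$. The key step is then to choose an orthonormal basis $\{e_1,\dots,e_n\}$ of $\Ec_1$ and extend it to an orthonormal basis $\{e_1,\dots,e_m\}$ of $\Ec$; analogously choose an orthonormal basis $\{f_1,\dots,f_n\}$ of $\Ec_2$ and extend it to an orthonormal basis $\{f_1,\dots,f_m\}$ of $\Ec$. The operator $u_0 \colon \Ec \to \Ec$ defined on these bases by $u_0 e_i := f_i$ for $i=1,\dots,m$ is unitary on the finite-dimensional Hilbert space $\Ec$ and satisfies $u_0\Ec_1 = \Ec_2$ by construction.

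Next I would patch this together with the identity on $\Ec^\perp$: define $u := u_0 \oplus \id_{\Ec^\perp} \in \Bc(\Hc)$. Then $u$ is unitary on $\Hc$ and $u\Ec_1 = \Ec_2$. Moreover $u - \1 = (u_0 - \id_\Ec)\oplus 0$ has range contained in $\Ec$, hence $u - \1 \in \Fc(\Hc)$. Therefore $u \in (\1+\Fc(\Hc))\cap\U(\Hc) = \U_{\Fc(\Hc)}(\Hc)$, as required.

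I do not foresee any real obstacle here: the argument is a direct finite-dimensional construction, and the only observation to emphasize is that enlarging one basis of $\Ec$ past the subspace $\Ec_1$ (and similarly past $\Ec_2$) is exactly what lets a single unitary on the ambient $\Ec$ simultaneously match both chosen bases, while its triviality off $\Ec$ supplies the finite-rank difference from~$\1$.
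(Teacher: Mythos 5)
Your proof is correct and is essentially the paper's argument: pass to the finite-dimensional subspace $\Ec := \Ec_1+\Ec_2$, produce a unitary $u_0\in\U(\Ec)$ carrying $\Ec_1$ onto $\Ec_2$ (via matching orthonormal bases), and extend by the identity on $\Ec^\perp$, so that $u-\1$ has range inside $\Ec$ and hence lies in $\Fc(\Hc)$. One small remark in your favor: the paper's proof as printed takes $u$ to equal $v_0$ on $\Ec_1$ and the identity on $\Ec_1^\perp$, which is a slip --- such a map need not be unitary (for instance when $\Ec_1\perp\Ec_2$ its range misses $\Ec_1$, and it also need not be isometric) --- whereas your patching along $\Ec$ and $\Ec^\perp$ is the correct way to make the extension unitary.
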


\begin{proof}
Let $\Ec:=\Ec_1+\Ec_2$. 
Since $\dim\Ec_1=\dim\Ec_2$, there exists $v_0\in\U(\Ec)$ with $v_0(\Ec_1)=\Ec_2$. 
Defining $u\in\U(\Hc)$ by $u\vert_{\Ec_1}=v_0$ and $u=\id_\Hc$ on $\Ec_1^\perp$ ($\subseteq\Hc$), 
one has $u\in\U_{\Fc(\Hc)}(\Hc)$ since $\dim\Ec_1<\infty$, and on the other hand $u\Ec_1=\Ec_2$. 
\end{proof}

\begin{theorem}\label{closures_th}
For any symmetrically normed ideal  $\Ic\subsetneqq\Bc(\Hc)$,  
the following conditions are equivalent: 
\begin{enumerate}[{\rm(i)}]
	\item\label{closures_th_item1} The ideal $\Ic$ is separable. 
	\item\label{closures_th_item2} For every  $a\in\Ic^\nor$ one has 
	$\overline{\Oc_{\Bc(\Hc)}(a)}^{\Vert\cdot\Vert}\subseteq \overline{\Oc_{\Fc(\Hc)}(a)}^{\Vert\cdot\Vert_\Ic}$.
\item\label{closures_th_item3} For every  operator $a\in\Ic$ with $a\ge 0$ one has 
$\Oc_{\Bc(\Hc)}(a)\subseteq \overline{\Oc_{\Fc(\Hc)}(a)}^{\Vert\cdot\Vert_\Ic}$.
\end{enumerate}
If any of these conditions is satisfied, then for any $a\in\Ic^\nor$ one has 
$$\overline{\Oc_{\Fc(\Hc)}(a)}^{\Vert\cdot\Vert_\Ic}
=\overline{\Oc_{\Ic}(a)}^{\Vert\cdot\Vert_\Ic}
=\overline{\Oc_{\Bc(\Hc)}(a)}^{\Vert\cdot\Vert_{\Ic}}
=\overline{\Oc_{\Bc(\Hc)}(a)}^{\Vert\cdot\Vert}.$$
\end{theorem}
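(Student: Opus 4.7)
The plan is to establish the cyclic implications (i)$\Rightarrow$(ii)$\Rightarrow$(iii)$\Rightarrow$(i) and then to deduce the string of equalities as a short corollary.

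For (i)$\Rightarrow$(ii), I would exploit that separability of $\Ic$ means $\Ic=\Ic^m$, so $\Fc(\Hc)$ is dense in $\Ic$ for $\Vert\cdot\Vert_\Ic$. Given $a\in\Ic^\nor$ and $b\in\overline{\Oc_{\Bc(\Hc)}(a)}^{\Vert\cdot\Vert}$, Lemma~\ref{grpd} furnishes $v\in\Vc_a$ with $b=vav^*$. Writing the spectral decomposition $a=\sum_k\lambda_kp_k$ with finite-rank eigenprojections, the truncations $a_N:=\sum_{k\le N}\lambda_kp_k$ converge to $a$ in $\Vert\cdot\Vert_\Ic$. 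Setting $P_N:=\sum_{k\le N}p_k$, the restriction $v\vert_{\Ran P_N}$ is an isometry onto the finite-dimensional space $v\Ran P_N$. In the spirit of Lemma~\ref{8june2018}, I would extend this isometry to a unitary of $\Ran P_N+v\Ran P_N$ and then to $u\in\U_{\Fc(\Hc)}$ by the identity on the orthogonal complement, so that $ua_Nu^*=va_Nv^*$. The estimate
\[
\Vert uau^*-b\Vert_\Ic\le\Vert u(a-a_N)u^*\Vert_\Ic+\Vert v(a-a_N)v^*\Vert_\Ic\le 2\Vert a-a_N\Vert_\Ic
\]
(using the ideal inequality and $\Vert v\Vert\le 1$) then delivers $b\in\overline{\Oc_{\Fc(\Hc)}(a)}^{\Vert\cdot\Vert_\Ic}$. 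The implication (ii)$\Rightarrow$(iii) is immediate from $\Oc_{\Bc(\Hc)}(a)\subseteq\overline{\Oc_{\Bc(\Hc)}(a)}^{\Vert\cdot\Vert}$.

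The main obstacle is (iii)$\Rightarrow$(i), which I would prove by contrapositive. Assume $\Ic$ is non-separable, so $\Ic^m\subsetneqq\Ic$; using the polar decomposition one may pick $a\ge 0$ in $\Ic\setminus\Ic^m$. Since $\Fc(\Hc)\subseteq\Ic^m$, such an $a$ has infinitely many strictly positive eigenvalues, and I diagonalize $a=\sum_n s_n e_n\otimes\overline{e_n}$. The crucial step is producing an operator with infinite-dimensional kernel that still lies outside $\Ic^m$, because the swap argument to follow requires a free orthogonal copy of the support. I would split by parity as $a=a'+a''$, where $a':=\sum_k s_{2k-1}e_{2k-1}\otimes\overline{e_{2k-1}}$ and $a'':=\sum_k s_{2k}e_{2k}\otimes\overline{e_{2k}}$; since $\Ic^m$ is a closed subspace, at least one of these two summands lies outside $\Ic^m$, and after relabeling I may assume $a'\notin\Ic^m$. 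Then $\overline{\mathrm{span}}\{e_{2k}\}\subseteq\Ker a'$ is infinite-dimensional, so the involution $u\in\U(\Hc)$ swapping $e_{2k-1}\leftrightarrow e_{2k}$ (and fixed on the rest) sends $a'$ to an operator whose support is orthogonal to that of $a'$. Consequently
\[
\vert ua'u^*-a'\vert=\sum_k s_{2k-1}\bigl(e_{2k-1}\otimes\overline{e_{2k-1}}+e_{2k}\otimes\overline{e_{2k}}\bigr),
\]
and the corresponding tail inequality gives $\Vert (ua'u^*-a')-T_N\Vert_\Ic\ge\Vert a'-a'_N\Vert_\Ic$ for every rank-$2N$ truncation $T_N$ of $ua'u^*-a'$, forcing $ua'u^*-a'\notin\Ic^m$. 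But $\Oc_{\Fc(\Hc)}(a')\subseteq a'+\Fc(\Hc)$ implies $\overline{\Oc_{\Fc(\Hc)}(a')}^{\Vert\cdot\Vert_\Ic}\subseteq a'+\Ic^m$, so $ua'u^*\notin\overline{\Oc_{\Fc(\Hc)}(a')}^{\Vert\cdot\Vert_\Ic}$, contradicting (iii).

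Once the equivalence of (i)--(iii) is in hand, the final chain of equalities follows from the elementary inclusions
\[
\overline{\Oc_{\Fc(\Hc)}(a)}^{\Vert\cdot\Vert_\Ic}\subseteq\overline{\Oc_{\Ic}(a)}^{\Vert\cdot\Vert_\Ic}\subseteq\overline{\Oc_{\Bc(\Hc)}(a)}^{\Vert\cdot\Vert_\Ic}\subseteq\overline{\Oc_{\Bc(\Hc)}(a)}^{\Vert\cdot\Vert}
\]
(the first two from $\U_{\Fc(\Hc)}\subseteq\U_\Ic\subseteq\U(\Hc)$, the third from the fact that $\Vert\cdot\Vert\le\Vert\cdot\Vert_\Ic$ makes $\Vert\cdot\Vert_\Ic$-closures smaller than $\Vert\cdot\Vert$-closures), together with the reverse inclusion supplied by (ii).
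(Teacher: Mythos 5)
Your proposal is correct, and it diverges from the paper's argument in a genuinely interesting way, chiefly in the hard implication (iii)$\Rightarrow$(i).

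For (i)$\Rightarrow$(ii), you and the paper are doing essentially the same thing in two different languages. The paper invokes Remark~\ref{normclosure} to match eigenvalues and multiplicities and then uses Lemma~\ref{8june2018} to produce, for each $n$, a finite-rank unitary $v_n$ matching the first $n$ eigenprojections; convergence in $\Vert\cdot\Vert_\Ic$ comes from separability of $\Ic$. You instead take the partial isometry $v\in\Vc_a$ with $b=vav^*$ from Lemma~\ref{grpd}, truncate $a$ to $a_N$, and extend $v\vert_{\Ran P_N}$ to $u\in\U_{\Fc(\Hc)}$; the estimate $\Vert uau^*-b\Vert_\Ic\le 2\Vert a-a_N\Vert_\Ic$ is exactly what the paper gets via its $m_\varepsilon$. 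These are the same construction up to reformulation, and yours is slightly more compact because Lemma~\ref{grpd} packages the eigenspace-matching.

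Where you genuinely part ways with the paper is (iii)$\Rightarrow$(i). The paper chooses $x_0\in\Ic\setminus\Sg_1(\Hc)$ with a bi-normalizing singular value sequence, constructs an auxiliary Marcinkiewicz-type symmetrically normed ideal $\Ic_0$ out of the partial sums $\sum_{k\le n}s_k(x_0)$, builds specific diagonal operators $a,b$ with disjoint supports and the same singular values as $x_0$, and shows $a-b\notin\Ic_0^m$, from which it infers $a-b$ cannot be a $\Vert\cdot\Vert_\Ic$-limit of finite-rank operators. You instead pick any $a\ge 0$ in $\Ic\setminus\Ic^m$ directly (possible because non-separability is literally $\Ic^m\subsetneqq\Ic$, and $\Ic^m$ is an ideal so one may pass to $\vert\cdot\vert$), split by parity so that one summand $a'$ stays outside $\Ic^m$, and observe that the parity swap $u$ gives $\vert ua'u^*-a'\vert=ua'u^*+a'\ge a'$, so $s_k(a')\le s_k(ua'u^*-a')$ for all $k$; since $\Ic^m$ is a symmetrically normed ideal this forces $ua'u^*-a'\notin\Ic^m$, and the inclusion $\overline{\Oc_{\Fc(\Hc)}(a')}^{\Vert\cdot\Vert_\Ic}\subseteq a'+\Ic^m$ (a consequence of Remark~\ref{incl}) gives the contradiction. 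Your route avoids the auxiliary ideal $\Ic_0$ and the sensitivity to comparing the two norms $\Vert\cdot\Vert_\Ic$ and $\Vert\cdot\Vert_{\Ic_0}$; the only facts it uses are that $\Ic^m$ is a closed ideal and the singular-value monotonicity of symmetric norms. This is more elementary and, arguably, makes the failure of (iii) more transparent: any $a\ge 0$ witnessing $\Ic^m\subsetneqq\Ic$ (after a parity split) already supplies a unitary conjugate lying outside $\overline{\Oc_{\Fc(\Hc)}(a')}^{\Vert\cdot\Vert_\Ic}$. One caveat: the ``tail inequality'' you state should be justified via the fact that for $x,y\in\Ic$ with $s_k(x)\le s_k(y)$ for all $k$ one has $\Vert x\Vert_\Ic\le\Vert y\Vert_\Ic$ (write $x=u^*dyu$ with $\Vert d\Vert\le 1$), applied to the best rank-$N$ Schmidt truncation rather than an arbitrary $T_N$; once phrased that way the deduction $ua'u^*-a'\notin\Ic^m$ is airtight.
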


\begin{proof}
The last assertion follows since $\Fc(\Hc)\subseteq \Ic\subseteq\Bc(\Hc)$ and $\Vert\cdot\Vert\le\Vert\cdot\Vert_\Ic$, and then it is easily seen that 
$$\overline{\Oc_{\Fc(\Hc)}(a)}^{\Vert\cdot\Vert_\Ic}
\subseteq \overline{\Oc_{\Ic}(a)}^{\Vert\cdot\Vert_\Ic}
\subseteq \overline{\Oc_{\Bc(\Hc)}(a)}^{\Vert\cdot\Vert_{\Ic}}
\subseteq \overline{\Oc_{\Bc(\Hc)}(a)}^{\Vert\cdot\Vert} 
\subseteq\Ic$$
where the last inclusion follows by Proposition~\ref{9june2018}.  

\eqref{closures_th_item1}$\Rightarrow$\eqref{closures_th_item2}: 
We prove that for arbitrary 
$b\in 
\overline{\Oc_{\Bc(\Hc)}(a)}^{\Vert\cdot\Vert}$ one has 
$b\in \overline{\Oc_{\Fc(\Hc)}(a)}^{\Vert\cdot\Vert_\Ic}$. 
Since $a,b\in\Kc(\Hc)$, 
it follows by Remark~\ref{normclosure} that there exists $n_0\in\NN\cup\{\aleph_0\}$ 
with $$\spec(a)\setminus\{0\}=\spec(b)\setminus\{0\}=\{\lambda_n\mid 1\le n< n_0\}$$
where $\lambda_n\ne\lambda_m$ if $n\ne m$. 
Moreover, if $1\le n<n_0$, it follows by Remark~\ref{normclosure} along with Lemma~\ref{8june2018} that there exists $v_n\in \U_{\Fc(\Hc)}$ with 
$v_n(E^a(\lambda_k)\Hc)=E^b(\lambda_k)$ for $k=1,\dots,n$. 
It then follows that $v_n((E^a(\lambda_k)\Hc)^\perp)=(E^b(\lambda_k))^\perp$ (since $v_n$ is unitary) and then 
$$v_n E^a(\lambda_k) v_n^*=E^b(\lambda_k)\text{ if }1\le k\le n.$$
Denoting $p_n:=\sum\limits_{k=1}^n E^a(\lambda_k)$ and $q_n:=\sum\limits_{k=1}^n E^b(\lambda_k)$, 
we have 
$v_n(ap_m)v_n^*=bq_m$, hence 
\begin{equation}\label{closures_th_proof_eq1}
v_nav_n^*-b=v_n(a-ap_m)v_n^*+(bq_m-b) \text{ if }n\ge m.
\end{equation}
Since $a,b\in\Ic$ and $\Ic$ is a separable symmetrically normed ideal, it follows by \cite[Ch. III, Th. 6.3]{GoKr69} 
that for arbitrary $\varepsilon>0$ there exists $m_\varepsilon\ge 1$ with 
$\Vert a-ap_{m_\varepsilon}\Vert_\Ic<\varepsilon$ and $\Vert b-bq_{m_\varepsilon}\Vert_\Ic<\varepsilon$. 
Then, by \eqref{closures_th_proof_eq1}, 
for every $n\ge m_\varepsilon$ one has  
$\Vert v_nav_n^*-b\Vert_\Ic<2\varepsilon$. 
Consequently $\lim\limits_{n\to\infty}\Vert v_nav_n^*-b\Vert_\Ic=0$, 
which shows that $b\in \overline{\Oc_{\Fc(\Hc)}(a)}^{\Vert\cdot\Vert_\Ic}$, 
as claimed. 

\eqref{closures_th_item2}$\Rightarrow$\eqref{closures_th_item3}:  
Obvious. 

\eqref{closures_th_item3}$\Rightarrow$\eqref{closures_th_item1}: 
We prove that if $\Ic$ is not separable, then 
there exists $a\in\Ic$ with $a\ge 0$ and 
$\Oc_{\Bc(\Hc)}(a)\not\subseteq \overline{\Oc_{\Fc(\Hc)}(a)}^{\Vert\cdot\Vert_\Ic}$. 

Since $\Ic$ is a symmetrically normed ideal, one has $\Sg_1(\Hc)\subseteq\Ic$, and moreover, since $\Ic$ 
is not separable, we actually have $\Sg_1(\Hc)\subsetneqq\Ic$. 
(See \cite[Ch. III, \S 2, pages 69--70]{GoKr69}.)
Hence 
there exists $x_0\in \Ic$ with $x_0\ge 0$, whose sequence of singular values $(s_k(x_0))_{k\ge 1}$ is 
bi-normalizing, that is, $\lim_{k\to\infty}s_k(x_0)=0$ and $\sum_k s_k(x_0)=\infty$. 

Let $\{e_k\}_{k\ge 1}$ be an orthonormal basis of $\Hc$, 
and $\{e_k\otimes\overline{e_k}
\}_{k\ge 1}$ be the corresponding sequence of rank-one orthogonal projections. 
For $x\in \Kc(\Hc)$ let
$$
\Vert x\Vert _{\Ic_0}=\sup\left\{ \frac{\sum_{k=1}^n s_k(x)}{\sum_{k=1}^n s_k(x_0)}: n\in\mathbb N\right\}
$$
which induces a symmetric norm in $\Kc(\Hc)$ \cite[Ch. III, Th.  14.1]{GoKr69}, and we let $\Ic_0=\{x\in \Kc(\Hc): \Vert x\Vert _{\Ic_0}<\infty\}$. 
Let $a=\sum\limits_{k\ge 1} s_k(x_0) 
e_{2k}\otimes\overline{e_{2k}}$ 
and 
$b=\sum\limits_{k\ge 1} s_k(x_0) 
e_{2k-1}\otimes\overline{e_{2k-1}}$. 
We have  
$b=uau^*$ for 
unitary operator $u\in \Bc(\Hc)$ satisfying $ue_{2k}=e_{2k-1}$, $ue_{2k-1}=e_{2k}$ for $k\ge 1$, hence $b\in \Oc_{\Bc(\Hc)}{(a)}$. 
Since the sequences of singular values of $a$, $b$, and $x_0$ coincide and $x_0\in\Ic$,  
it is also easy to check that $a,b\in\Ic$. 
(Cf. \cite[page 26]{Sch60}.)
Moreover
$$
\lim\limits_{n\to \infty} \frac{\sum_{k=1}^n s_k(a)}{\sum_{k=1}^n s_k(x_0)}=1
$$
and likewise with $b$ and $b-a$. This shows \cite[loc. cit]{GoKr69} that neither of $a,b,a-b$ belongs to $\Ic_0^m$, which is the closure of finite rank operators in the norm of~$\Ic_0$. 
Assume $b\in \overline{\Oc_{\Fc(\Hc)}(a)}^{\Vert\cdot\Vert_\Ic}$. 
Then $b=\lim_n u_nau_n^*$ with $u_n=\1+x_n$ and $x_n\in\Fc(\Hc)$, implying
$$
b -a=\lim_n x_na+ax_n^*+x_nax_n^*
$$
which is impossible since $x_na+ax_n^*+x_nax_n^*\in \Fc(\Hc)$ for all $n\ge 1$, 
while we have seen above that $b-a\not\in \Ic^m_0$. 
Thus $b\in \Oc_{\Bc(\Hc)}{(a)}\setminus \overline{\Oc_{\Fc(\Hc)}(a)}^{\Vert\cdot\Vert_\Ic}$, and we are done. 
\end{proof}

We now obtain the following generalization of \cite[Th. 5.1]{GKMSu17}. 

\begin{corollary}\label{closed}
Let $\Ic\subseteq\Bc(\Hc)$ be a separable symmetrically normed ideal. 
If $a\in\Ic^\nor$, then $\Oc_{\Bc(\Hc)}(a)$ is a closed subset of $\Ic$ if and only if $a\in\Fc(\Hc)$. 
\end{corollary}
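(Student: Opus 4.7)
The plan is to deduce this from Theorem~\ref{closures_th} together with the classical Fialkow--Voiculescu fact recalled just before Theorem~\ref{isclosed}: for a normal operator $a\in\Bc(\Hc)^\nor$, the orbit $\Oc_{\Bc(\Hc)}(a)$ is closed in the operator-norm topology if and only if $\spec(a)$ is finite. Observe also that since $a$ lies in $\Ic\subseteq\Kc(\Hc)$ and is normal, finiteness of $\spec(a)$ is equivalent to $a\in\Fc(\Hc)$, because the nonzero spectral projections of a compact normal operator have finite rank.

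For the ``if'' direction, I would assume $a\in\Fc(\Hc)^\nor$. Then $\spec(a)$ is finite, so by the above classical result $\Oc_{\Bc(\Hc)}(a)$ is $\Vert\cdot\Vert$-closed in $\Bc(\Hc)$. Since $\Oc_{\Bc(\Hc)}(a)\subseteq\Ic$ by Proposition~\ref{9june2018} and the inclusion $(\Ic,\Vert\cdot\Vert_\Ic)\hookrightarrow(\Bc(\Hc),\Vert\cdot\Vert)$ is continuous (recall $\Vert\cdot\Vert\le\Vert\cdot\Vert_\Ic$), the orbit is automatically $\Vert\cdot\Vert_\Ic$-closed in $\Ic$.

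For the ``only if'' direction, which is where the hypothesis that $\Ic$ is separable enters, I would assume $\Oc_{\Bc(\Hc)}(a)$ is $\Vert\cdot\Vert_\Ic$-closed in $\Ic$. By the final assertion of Theorem~\ref{closures_th} one has
$$\overline{\Oc_{\Bc(\Hc)}(a)}^{\Vert\cdot\Vert_{\Ic}}=\overline{\Oc_{\Bc(\Hc)}(a)}^{\Vert\cdot\Vert},$$
and the left-hand side coincides with $\Oc_{\Bc(\Hc)}(a)$ by assumption; hence $\Oc_{\Bc(\Hc)}(a)$ is also $\Vert\cdot\Vert$-closed in $\Bc(\Hc)$, so the classical result forces $\spec(a)$ to be finite, whence $a\in\Fc(\Hc)$ by the first paragraph.

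I do not foresee any substantive obstacle here; the corollary essentially packages Theorem~\ref{closures_th} together with the classical characterization of norm-closed normal orbits, and the only mildly subtle point is matching the two notions of closedness through the continuous inclusion $\Ic\hookrightarrow\Bc(\Hc)$, which is immediate from $\Vert\cdot\Vert\le\Vert\cdot\Vert_\Ic$.
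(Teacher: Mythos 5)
Your argument is correct, and its logical skeleton coincides with the paper's: both directions rest on Theorem~\ref{closures_th}, which identifies the $\Ic$-norm closure of $\Oc_{\Bc(\Hc)}(a)$ with its operator-norm closure when $\Ic$ is separable, thereby reducing the question of $\Ic$-closedness to operator-norm closedness, which in turn is governed by whether $\spec(a)$ is finite. The one place you diverge is in the ``only if'' step. You invoke the classical Fialkow--Voiculescu characterization (recalled in the paper just before Theorem~\ref{isclosed}) that the unitary orbit of a normal operator is $\Vert\cdot\Vert$-closed iff the spectrum is finite. The paper instead re-derives the relevant half of that dichotomy on the spot: for $a\in\Ic\setminus\Fc(\Hc)$ it produces an explicit $b=vav^*\in\overline{\Oc_{\Bc(\Hc)}(a)}^{\Vert\cdot\Vert}\setminus\Oc_{\Bc(\Hc)}(a)$ by conjugating $a$ with a partial isometry $v$ that changes the codimension of $\overline{\Ran a}$, using Lemma~\ref{grpd} to certify membership in the closure, and then pushes $b$ into the $\Ic$-closure via Theorem~\ref{closures_th}. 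Your citation-based route is shorter and avoids the construction; the paper's explicit witness has the pedagogical advantage of foregrounding the role of partial isometries, foreshadowing the groupoid framework developed in Section~\ref{sec:groupoid}. Your ``if'' direction, transferring $\Vert\cdot\Vert$-closedness in $\Bc(\Hc)$ to $\Vert\cdot\Vert_\Ic$-closedness in $\Ic$ via continuity of the inclusion, is a clean substitute for the paper's appeal to Theorem~\ref{finisemb} and is equally valid.
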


\begin{proof}
If $a\in\Fc(\Hc)$ then $\Oc_{\Bc(\Hc)}(a)$ is a closed subset of $\Ic$ by Theorem~\ref{finisemb}.

Now assume  $a\in\Ic\setminus\Fc(\Hc)$. 
Since $a$ is a compact operator, it follows that the closure of its range is a closed infinite-dimensional subspace $\Hc_a\subseteq\Hc$. 
Since $\Hc_a$ is infinite-dimensional, there exists a closed infinite-dimensional subspace $\Hc_b$ for which the orthogonal complements $\Hc_a^\perp$ and $\Hc_b^\perp$ have differing dimensions, 
and in particular no unitary operator in $\U(\Hc)$ maps $\Hc_a$ onto $\Hc_b$. 
Nevertheless, since $\Hc_a$ and $\Hc_b$ are closed infinite-dimensional subspaces of the separable Hilbert space $\Hc$, there exists a partial isometry $v\in\Bc(\Hc)$ whose initial subspace is $\Hc_a$ and whose final subspace is~$\Hc_b$. 
If we define $b:=vav^*$, it follows that $\Hc_b$ is the closure of the range of $b$. 
If $b=uau^*$ for some unitary operator $u\in\U(\Hc)$, then $u(\Hc_a)=\Hc_b$, which is impossible by the above remarks. 
Thus $b\not\in\Oc_{\Bc(\Hc)}(a)$. 
However $b\in\overline{\Oc_{\Bc(\Hc)}(a)}^{\Vert\cdot\Vert}$ by Lemma~\ref{grpd}. 
Moreover $\overline{\Oc_{\Bc(\Hc)}(a)}^{\Vert\cdot\Vert_{\Ic}}
=\overline{\Oc_{\Bc(\Hc)}(a)}^{\Vert\cdot\Vert}$ by Theorem~\ref{closures_th}, 
hence  $b\in\overline{\Oc_{\Bc(\Hc)}(a)}^{\Vert\cdot\Vert_{\Ic}}$. 
This shows that $\Oc_{\Bc(\Hc)}(a)$ is not a closed subset of $\Ic$, and we are done. 
\end{proof}

\begin{remark}
Corollary~\ref{closed} for $\Ic=\Kc(\Hc)$ shows that for any $a\in\Kc(\Hc)^\nor$, its unitary orbit $\Oc_{\Bc(\Hc)}(a)$ is closed in $\Bc(\Hc)$ if and only if $a$ has finite spectrum, or, equivalently, finite rank. 
\end{remark}


\section{On the orbits of the action groupoid $\Vc(\M)\ast \M^\nor\tto \M^\nor$
}
\label{sec:groupoid}

In the present section we use some basic terminology related to groupoids. 
A quite convenient reference in this connection is \cite[App.]{OS}, while the general theory of Banach-Lie groupoids is developed in \cite{BGJP19}. 

Motivated by the results of Section~\ref{sec:closure} (e.g., Lemma~\ref{grpd}), we investigate the action of the groupoid of partial isometries on the set of normal operators in a von Neumann algebra. 
Specifically, we establish smoothness properties of the corresponding groupoid orbits (Corollary~\ref{smcl0}) and we describe the norm closures of the groupoid orbits (Proposition~\ref{P3} and Corollary~\ref{C4}). 
Finally, we study the natural topologies carried by the groupoid orbits of normal operators (Theorems \ref{immersed} and \ref{top_th}). 
We also obtain some results regarding groupoid orbits in von Neumann algebras, inspired by \cite[Th. 3.4]{BO19} on Banach manifold structures  
of the coadjoint orbits of the Banach-Lie groupoid of partial isometries. 
However, the present approach is completely different 
and, even in the special case $\M=\Bc(\Hc)$, the present result can be recovered from \cite[Th. 3.4]{BO19} 
only in the special case of self-adjoint trace-class operators, 
while here we study arbitrary \emph{normal} operators. 

The following definition is an illustration of the general framework of groupoid actions; see for instance \cite[App. C]{OS}.


\begin{definition}
	Let $\M$ be any $W^*$-algebra with its lattice of projections 
$$
\Pc(\M):=\{p\in\M\mid p=p^2=p^*\},
$$
and its set of partial isometries 
$$\Vc(\M):=\{v\in\M\mid v^*v\in\Pc(\M)\}
$$
regarded as a Banach-Lie groupoid 
$\Vc(\M)\tto\Pc(\M)$. 
(See for instance \cite{BGJP19} and the references therein). 
We recall the notation
$$
\M^\sa:=\{x\in\M\mid x^*=x\}\textrm{ and }\M^\nor:=\{y\in\M\mid y^*y=yy^*\}
$$ 
for the sets of self-adjoint and of normal operators respectively. 
For every $y\in \M^\nor$ let $\bp(y):=p_y$. 
This is the orthogonal projection onto $\overline{\Ran\, y}$  for any faithful representation $\M=\M''\subseteq\Bc(\Hc)$, and it can be abstractly defined e.g., as the smallest $p\in\Pc(\M)$ satisfying $py=y$.

We then define $\Vc(\M)\ast \M^\nor\tto \M^\nor$ as the \textit{action groupoid} 
given by the action of the groupoid of partial isometries $\Vc(\M)\tto\Pc(\M)$ 
on $\M^\nor$ via the moment map $\bp\colon \M^\nor\to \Pc(\M)$. 
More specifically, 
$$\Vc(\M)\ast \M^\nor:=\{(v,y)\in \Vc(\M)\times \M^\nor \mid v^*v=\bp(y)\}$$
the source/target maps are 
$$
\xymatrix{\M^\nor & \Vc(\M)\ast \M^\nor \ar[l]_{\bt\quad} \ar[r]^{\quad\bs} & \M^\nor},
\ \bs(v,y):=y,\   
 \bt(v,y):=vyv^*
 $$ 
while the multiplication is given by $$(v_1,y_1)\cdot(v_2,y_2):=(v_1v_2,y_2)$$ 
if $(v_1,y_1),(v_2,y_2)\in \Vc(\M)\ast \M^\nor$ satisfy $\bs(v_1,y_1)=\bt(v_2,y_2)$. In particular $v_1^*v_1=v_2v_2^*$, so $v_1v_2\in\Vc(\M)$. For later use, we note the following: 
\begin{equation}\label{6Apr2019_var}
\text{ if }(v,y)\in \Vc(\M)\times \M^\nor\text{ then }\bp(vyv^*)=v\bp(y)v^*.
\end{equation}
The \emph{groupoid orbit} of any $a\in\M^\nor$ is 
$$
\Oc_a:=\{vav^*\mid (v,a)\in \Vc(\M)\ast \M^\nor\}.
$$ 
\end{definition}

\subsection{On the closures of groupoid orbits}

\begin{lemma}\label{L1}
	For every $W^*$-algebra the following assertions hold. 
	\begin{enumerate}[{\rm(i)}]
		\item\label{L1_item1}
		If $a\in\M^\nor$ and $b\in\overline{\Oc_a}$ then $\spec(a)\cup\{0\}=\spec(b)\cup\{0\}$. 
		\item\label{L1_item2} 
		If $a\in\M^\nor$, $b\in\overline{\Oc_a}$, and $\{a_n\}_{n\ge 1}$ is a sequence in $\Oc_a$ with $\lim\limits_{n\to\infty}a_n=b$ in the operator norm topology, 
		then for every function $f\in\Cc(\spec(a)\cup\{0\})$ we have  $\lim\limits_{n\to\infty}f(a_n)=f(b)$ in the operator norm topology. 
	\end{enumerate}
\end{lemma}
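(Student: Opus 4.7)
The plan is as follows. For part~(i), I first verify that every element of the orbit $\Oc_a$ has the same nonzero spectrum as $a$. Let $c=vav^*$ with $(v,a)\in\Vc(\M)\ast\M^\nor$, so that $v^*v=\bp(a)$. Since $\bp(a)$ is the support projection of $a$, we have $av^*v=v^*va=a$. Writing $c=v\cdot(av^*)$ while $(av^*)\cdot v=a$, the standard $C^*$-algebra identity $\spec(XY)\setminus\{0\}=\spec(YX)\setminus\{0\}$ yields
\[
\spec(c)\cup\{0\}=\spec(a)\cup\{0\}.
\]
For the closure, note that if $a_n\to b$ in norm then $a_na_n^*\to bb^*$ and $a_n^*a_n\to b^*b$ in norm, whence $b\in\M^\nor$. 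By the classical Hausdorff continuity of the spectrum on the set of normal elements of a $C^*$-algebra, $\spec(a_n)\to\spec(b)$ in Hausdorff distance, and since each $\spec(a_n)\cup\{0\}$ equals $\spec(a)\cup\{0\}$ by the preceding observation, the conclusion of part~(i) follows.

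For part~(ii), set $K:=\spec(a)\cup\{0\}$, a compact subset of $\CC$ which by~(i) contains $\spec(a_n)$ and $\spec(b)$ for every $n\ge 1$. For any polynomial $p(z,\bar z)$ the map $y\mapsto p(y,y^*)$ is norm-continuous on $\M$, hence $p(a_n,a_n^*)\to p(b,b^*)$ in norm. Given $f\in\Cc(K)$ and $\varepsilon>0$, by the Stone--Weierstrass theorem there exists a polynomial $p(z,\bar z)$ with $\sup_{z\in K}|f(z)-p(z,\bar z)|<\varepsilon$. Since $a_n$ and $b$ are normal with spectrum contained in $K$, the continuous functional calculus yields $\Vert f(a_n)-p(a_n,a_n^*)\Vert\le\varepsilon$ and $\Vert f(b)-p(b,b^*)\Vert\le\varepsilon$. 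Combining these bounds with the norm convergence $p(a_n,a_n^*)\to p(b,b^*)$, the triangle inequality gives $\limsup_n\Vert f(a_n)-f(b)\Vert\le 2\varepsilon$, and letting $\varepsilon\to 0$ completes the proof.

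The main subtlety lies in part~(i), where one must invoke the Hausdorff continuity of the spectrum map on normal elements of a $C^*$-algebra (a classical fact in the spirit of Newburgh's theorem), together with the algebraic observation that the moment map condition $v^*v=\bp(a)$ is exactly what forces $\spec(vav^*)$ to differ from $\spec(a)$ at most in the point $\{0\}$. Everything else reduces to standard functional-calculus manipulations.
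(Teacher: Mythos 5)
Your proof is correct and follows essentially the same route as the paper's: establish that $\spec(c)\cup\{0\}=\spec(a)\cup\{0\}$ for every $c\in\Oc_a$, invoke norm continuity of the spectrum on normal elements (Newburgh/Halmos) to pass to the closure, and then use norm continuity of the continuous functional calculus for part~(ii). The only genuine addition you make is to spell out two facts the paper leaves as ``easily checked'' or citations: the identity $\spec(XY)\setminus\{0\}=\spec(YX)\setminus\{0\}$ applied to $X=v$, $Y=av^*$ (using $av^*v=a$, which holds because $a$ is normal so its range projection $\bp(a)=v^*v$ commutes with $a$) to get spectral invariance on the orbit, and the Stone--Weierstrass approximation underlying the continuity of functional calculus. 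These details are correct and welcome but do not change the argument.
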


\begin{proof}
	\eqref{L1_item1}  
	Since $b\in\overline{\Oc_a}$ there exists a sequence $\{a_n\}_{n\ge 1}$  in $\Oc_a$ with $\lim\limits_{n\to\infty}a_n=b$ in the operator norm topology. 
	For every $n\ge 1$ we have $a_n\in\Oc_a$ and then it is easily checked that $\spec(a_n)\cup\{0\}=\spec(a)\cup\{0\}$. 
	On the other hand, we also have $a_n\in\M^\nor$. 
	The assertion then follows by the norm continuity of the spectrum on the set of normal operators. 
	(See for instance \cite[Sol. 105]{Ha82}.)
	
	\eqref{L1_item2} 
	We have already seen that  $\spec(b)\cup\{0\}=\spec(a)\cup\{0\}=\spec(a_n)\cup\{0\}$ for all $n\ge1$, 
	hence the assertion follows by the norm continuity of the functional calculus with continuous functions. 
	(See for instance \cite[Sol. 126]{Ha82}.)
\end{proof}

\begin{lemma}\label{L2}
	Let $a,b\in\Bc(\Hc)^\nor$ with their spectral measures denoted by $E^a$ and $E^b$, respectively. 
	Then for arbitrary $\varepsilon\in(0,\infty)$ there exist an integer $r\ge 1$, mutually disjoint open sets $\Delta_1,\dots,\Delta_r\subseteq\CC$, and a closed subset $N\subseteq\CC\setminus(\Delta_1\sqcup\cdots\sqcup\Delta_k)$ satisfying 
	\begin{equation}\label{L2_eq1}
	\spec(a)=N\sqcup\bigsqcup_{k=1}^r(\spec(a)\cap\Delta_k)
	\end{equation}
	and moreover $\vert z-w\vert\le\varepsilon $ if $z,w\in\Delta_k$ for $k=1,\dots,r$, and $E^a(N)=E^b(N)=0$. 
\end{lemma}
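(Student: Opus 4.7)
The plan is to obtain the sets $\Delta_1,\dots,\Delta_r$ as open cells of a suitably chosen square grid in~$\CC$ whose grid lines are null both for $E^a$ and for~$E^b$. Specifically, fix $\delta\in(0,\varepsilon/\sqrt 2)$ and, for any shift parameter $t\in[0,\delta)$, consider the grid consisting of the vertical lines $L_j^v(t):=\{z\in\CC\mid\Re z=t+j\delta\}$ and the horizontal lines $L_k^h(t):=\{z\in\CC\mid\Im z=t+k\delta\}$ with $j,k\in\ZZ$. Let $G(t)$ be the union of all these grid lines, which is a closed subset of~$\CC$, and let $\{C_i(t)\}_{i\in I}$ be the collection of connected components of~$\CC\setminus G(t)$. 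Each $C_i(t)$ is an open square of side~$\delta$, so $\operatorname{diam}C_i(t)=\delta\sqrt2\le\varepsilon$; these squares are mutually disjoint and cover $\CC\setminus G(t)$.

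The key step is to choose $t$ so that $E^a(G(t))=E^b(G(t))=0$. The vertical lines $\{L_0^v(t)\}_{t\in[0,\delta)}$ are pairwise disjoint as $t$ varies, hence the projections $E^a(L_0^v(t))$ are pairwise orthogonal in $\Bc(\Hc)$; since $\Hc$ is separable, there are at most countably many values of $t\in[0,\delta)$ for which $E^a(L_0^v(t))\ne 0$, and the same argument applies to each $L_j^v(t)$ and to $L_k^h(t)$ (for every fixed $j,k\in\ZZ$, with respect to both $E^a$ and $E^b$). Taking the countable union over $j,k$ and over the two spectral measures, one sees that the set of ``bad'' shifts $t\in[0,\delta)$ is countable, and any $t$ outside this set yields a grid $G(t)$ with $E^a(G(t))=E^b(G(t))=0$. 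Fix such a $t$ from now on.

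With this choice, define $N:=\spec(a)\cap G(t)$, which is closed since $\spec(a)$ and $G(t)$ are, and satisfies $E^a(N)\le E^a(G(t))=0$ and similarly $E^b(N)=0$. Since $\spec(a)$ is compact, only finitely many of the open cells $C_i(t)$ meet $\spec(a)\setminus G(t)$; label them $\Delta_1,\dots,\Delta_r$. They are mutually disjoint by construction, each has diameter at most $\varepsilon$, and $N\cap\bigsqcup_k\Delta_k=\emptyset$ because $N\subseteq G(t)$ while each $\Delta_k$ is a connected component of $\CC\setminus G(t)$. Finally,
$$
\spec(a)=\bigl(\spec(a)\cap G(t)\bigr)\sqcup\bigl(\spec(a)\setminus G(t)\bigr)=N\sqcup\bigsqcup_{k=1}^r\bigl(\spec(a)\cap\Delta_k\bigr),
$$
which is the required decomposition.

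The main obstacle is the measure-theoretic step, namely arranging that the (uncountable) grid lines are simultaneously null for both projection-valued measures $E^a$ and $E^b$; this is handled by the orthogonality/separability argument above, which replaces the usual scalar-measure reasoning. The fact that only finitely many cells are needed uses compactness of~$\spec(a)$, and all other requirements (disjointness, diameters, closedness of~$N$, the partition identity) are immediate from the construction.
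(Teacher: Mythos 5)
Your proof is correct, and all the pieces are in place: the diameter bound $\delta\sqrt 2<\varepsilon$, the orthogonality argument to rule out all but countably many bad shifts $t$ (separately for each fixed $j,k\in\ZZ$ and for each of $E^a,E^b$, then taking the countable union), the observation that $G(t)$ is then a countable union of $E^a$- and $E^b$-null lines hence itself null, the closedness of $N=\spec(a)\cap G(t)$, and the use of compactness of $\spec(a)$ to extract finitely many cells. You even get $r\ge 1$ for free, since $\spec(a)\subseteq G(t)$ would force $\1=E^a(\spec(a))\le E^a(G(t))=0$. Note, though, that the paper itself gives no proof of this lemma: it is dispatched with a one-line citation to Gellar--Page \cite{GePa74}. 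Your grid-shifting argument is the standard route (and in substance the one underlying that reference), so you have in effect supplied the self-contained proof the paper elects not to reproduce; the main point worth flagging is precisely the one you singled out, that the scalar trick of shifting a grid off a null set must be replaced by the orthogonality-of-projections plus separability argument because the relevant measures are projection-valued.
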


\begin{proof}
	This is a by-product of the proof of \cite[Th. 1]{GePa74}.
\end{proof}

In the following statement we use the traditional notation $\sim$ for Murray-von Neumann equivalence of projections.  
That is, if $\M$ is a $W^*$-algebra and $p,q\in\Pc(\M)$, then we write $p\sim q$ if and only if there exists $v\in\M$ with $v^*v=p$ and $vv^*=q$. 
In terms of groupoid orbits we have $\Oc_p=\{q\in\Pc(\M)\mid p\sim q\}$ for every $p\in\Pc(\M)$, 
as noted in \cite{OS}. 

\begin{proposition}\label{P3}
	Consider the von Neumann algebra $\M=\Bc(\Hc)$. 
	If $a,b\in\M^\nor$ with their spectral measures denoted by $E^a$ and $E^b$, respectively, then the following conditions are equivalent: 
	\begin{enumerate}[{\rm(i)}]
		\item\label{P3_item1}
		One has $b\in\overline{\Oc_a}$. 
		\item\label{P3_item2} 
		One has $\spec(a)\cup\{0\}=\spec(b)\cup\{0\}$ and $E^a(\Delta)\sim E^b(\Delta)$ in $\M$ for every open subset $\Delta\subseteq\CC\setminus\{0\}$.  
	\end{enumerate}
\end{proposition}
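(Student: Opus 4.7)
The equivalence splits naturally into the two implications, one constructive and one involving a limiting argument.

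\textbf{For $\eqref{P3_item2}\Rightarrow\eqref{P3_item1}$.} Given $\varepsilon>0$ we will build a partial isometry $v\in\Vc(\Bc(\Hc))$ with $v^*v=p_a$ satisfying $\Vert vav^*-b\Vert\le 2\varepsilon$. First we apply Lemma~\ref{L2} to the pair $(a,b)$ to obtain pairwise disjoint open sets $\Delta_1,\dots,\Delta_r\subseteq\CC$ of diameter at most $\varepsilon$, together with a closed set $N$ on which both spectral measures vanish, partitioning $\spec(a)=\spec(b)$. After relabeling so that $0\in\Delta_1$ whenever $0$ belongs to any $\Delta_k$ and setting $z_1:=0$, we pick arbitrary $z_k\in\Delta_k$ for $k\ge 2$. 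Each $\Delta_k\setminus\{0\}$ is open in $\CC\setminus\{0\}$, so hypothesis~\eqref{P3_item2} yields partial isometries $v_k$ with $v_k^*v_k=E^a(\Delta_k\setminus\{0\})$ and $v_kv_k^*=E^b(\Delta_k\setminus\{0\})$. Since their initial projections are mutually orthogonal and sum to $E^a(\CC\setminus\{0\})=p_a$ (using $E^a(N)=0$), the sum $v:=\sum_k v_k$ is a partial isometry with $v^*v=p_a$. Introducing the step approximations $a_\varepsilon:=\sum_k z_k E^a(\Delta_k)$ and $b_\varepsilon:=\sum_k z_k E^b(\Delta_k)$, the diameter bound and normality give $\Vert a-a_\varepsilon\Vert,\Vert b-b_\varepsilon\Vert\le\varepsilon$. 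The choice $z_1=0$ makes the $\Delta_1$-piece drop out on both sides, so a direct computation gives $va_\varepsilon v^*=b_\varepsilon$ and hence $\Vert vav^*-b\Vert\le 2\varepsilon$.

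\textbf{For $\eqref{P3_item1}\Rightarrow\eqref{P3_item2}$.} Let $a_n:=v_n av_n^*\to b$ in operator norm with $v_n^*v_n=p_a$. The spectrum equality is immediate from Lemma~\ref{L1}\eqref{L1_item1}. For the Murray--von Neumann equivalences, the starting point is the identity
\begin{equation*}
E^{a_n}(\Delta)=v_n E^a(\Delta) v_n^*\quad\text{for every open }\Delta\subseteq\CC\setminus\{0\},
\end{equation*}
which holds because $E^a(\Delta)\le p_a=v_n^* v_n$ and $v_n$ intertwines $a|_{p_a\Hc}$ with $a_n|_{v_n v_n^*\Hc}$. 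In $\Bc(\Hc)$ this immediately gives $\rank E^{a_n}(\Delta)=\rank E^a(\Delta)$ for every~$n$. To transport the equality to $b$ we will use that the Riesz projection $R_c(\gamma):=\frac{1}{2\pi\ie}\oint_\gamma(\zeta\1-c)^{-1}\,d\zeta$ depends norm continuously on $c$ for closed curves $\gamma\subseteq\rho(c)$, and for normal $c$ coincides with $E^c$ evaluated on the bounded component enclosed by~$\gamma$. Applying this with $\gamma\subseteq\CC\setminus(\spec(a)\cup\{0\})=\rho(a)\cap\rho(b)$ enclosing a region $D\subseteq\Delta$ yields $E^{a_n}(D)\to E^b(D)$ in norm; since orthogonal projections at norm distance below $1$ are Murray--von Neumann equivalent, this forces $\rank E^b(D)=\rank E^a(D)$ for $n$ large. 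Exhausting $\Delta$ by such regions $D_k\nearrow\Delta$ and passing to strong-operator limits, the matching ranks at each stage propagate to $\rank E^a(\Delta)=\rank E^b(\Delta)$, which is exactly $E^a(\Delta)\sim E^b(\Delta)$ in $\Bc(\Hc)$.

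\textbf{Main obstacle.} The second direction is the harder one: spectral projections are norm-discontinuous on $\Bc(\Hc)^\nor$, so the rank equality for the $a_n$ does not transfer automatically to the limit. The Riesz-projection device is norm-continuous but requires isolating curves in the common resolvent set, and building the exhaustion of $\Delta$ by such regions takes the bulk of the technical work---especially when $\spec(a)\cap\Delta$ has non-empty interior, where small circles around interior spectral points may fail to lie in $\rho(a)$ and one must select the exhausting curves among the admissible radii for which the circle avoids the compact set $\spec(a)\cup\{0\}$.
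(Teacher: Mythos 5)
Your $\eqref{P3_item2}\Rightarrow\eqref{P3_item1}$ direction is essentially the paper's argument, presented slightly more cleanly via the step approximations $a_\varepsilon,b_\varepsilon$ and the choice $z_1=0$; this part is fine. The $\eqref{P3_item1}\Rightarrow\eqref{P3_item2}$ direction, however, has a genuine gap in the Riesz-projection step.

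The problem is that the claimed exhaustion of $\Delta$ by regions $D_k$ bounded by curves in $\rho(a)$ is impossible whenever $\spec(a)\cap\Delta$ sits inside a nontrivial continuum of $\spec(a)$ that is not contained in $\Delta$. For a concrete counterexample, take $\spec(a)\supseteq[1,2]$ and let $\Delta$ be a small open disk centered at $3/2$ whose boundary meets the real axis inside $(1,2)$. Every Jordan curve $\gamma\subseteq\Delta$ with $\gamma\cap\spec(a)=\emptyset$ necessarily lies in one of the two simply connected components of $\Delta\setminus[1,2]$, so its interior lies in that same component and therefore misses $\spec(a)$ entirely; thus $E^a(D)=0$ for every admissible region $D$, and no exhaustion $D_k\nearrow\Delta$ with $E^a(D_k)\to E^a(\Delta)$ exists. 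Your proposed remedy (choosing admissible radii so a circle avoids the compact $\spec(a)\cup\{0\}$) does not help: the obstruction is topological, not about null sets of radii. The paper sidesteps this by the case split you are implicitly missing. When $\Delta\cap\spec(a)$ is infinite, both $E^a(\Delta)$ and $E^b(\Delta)$ have infinite rank (by choosing infinitely many disjoint small balls around distinct spectral points inside $\Delta$), hence are automatically equivalent in $\Bc(\Hc)$ on a separable $\Hc$. When $\Delta\cap\spec(a)$ is finite, each of its points is isolated in $\spec(a)\cup\{0\}$, so $\chi_{\Delta\cap\spec(a)}$ is continuous on $\spec(a)\cup\{0\}$ and Lemma~\ref{L1}\eqref{L1_item2} gives norm convergence of spectral projections directly; this is exactly the regime in which your Riesz-projection idea would also work, but the functional-calculus formulation is cleaner. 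You should insert the infinite-intersection case and restrict the contour argument to the finite case. (Minor: $\CC\setminus(\spec(a)\cup\{0\})=\rho(a)\cap\rho(b)$ is only an inclusion $\subseteq$ in general, e.g.\ when $0\notin\spec(a)\cup\spec(b)$, though this does not affect the argument.)
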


\begin{proof}
	``\eqref{P3_item1}$\implies$\eqref{P3_item2}'' 
	We have $\spec(a)\cup\{0\}=\spec(b)\cup\{0\}$ by Lemma~\ref{L1}\eqref{L1_item1}. 
	For an arbitrary open subset $\Delta\subseteq\CC\setminus\{0\}$ we now separately discuss two cases that may occur: 
	
	Case 1: The set $\Delta\cap\spec(a)$ is infinite, hence $\Delta\cap\spec(b)$ is infinite as well. 
	Then it is easily checked that both projections $E^a(\Delta),E^b(\Delta)\in\Pc(\M)$ have infinite rank, hence $E^a(\Delta)\sim E^b(\Delta)$ in $\M=\Bc(\Hc)$. 
	
	Case 2: The set $\Delta\cap\spec(a)$ is finite, 
	hence $\Delta\cap\spec(b)$ is finite as well. 
	Then the characteristic function $f:=\chi_{\Delta\cap\spec(a)}$  satisfies $f\in\Cc(\spec(a)\cup\{0\})$. 
	For any sequence $\{a_n\}_{n\ge 1}$  in $\Oc_a$ with $\lim\limits_{n\to\infty}a_n=b$ in the operator norm topology 
	we then have $\lim\limits_{n\to\infty}f(a_n)=f(b)$ in the operator norm topology again, 
	by Lemma~\ref{L1}\eqref{L1_item1}. 
	Here $f(b)=E^b(\Delta)$ and $f(a_n)=E^{a_n}(\Delta)$ are orthogonal projections for every $n\ge 1$ 
	by the way the function $f$ was chosen, hence there exists $n_0\ge 1$ for which the rank of $f(a_n)$ is equal to the rank of $f(b)$ for every $n\ge n_0$. 
	On the other hand, for every $n\ge 1$ we have $a_n\in\Oc_a$, and then it is easily checked that the rank of $f(a_n)$ is equal to the rank of $f(a)$ 
	since $\Delta\subseteq\CC\setminus\{0\}$. 
	Consequently the rank of $f(a)$ is equal to the rank of $f(b)$, 
	that is, $E^a(\Delta)\sim E^b(\Delta)$ in $\M=\Bc(\Hc)$.

	``\eqref{P3_item2}$\implies$\eqref{P3_item1}'' 
	We show that for arbitrary $\varepsilon\in(0,\infty)$ there exists $v\in\M$ with $v^*v=\bp(a)$ and $\Vert vav^*-b\Vert\le2\varepsilon$. 
	
	To this end we first use Lemma~\ref{L2} to find the partition~\eqref{L2_eq1}. 
	Then 
	$$\spec(b)\setminus\{0\}=
	\spec(a)\setminus\{0\}
	=(N\setminus\{0\})\sqcup\bigsqcup_{r=1}^k((\spec(a)\setminus\{0\})\cap\Delta_k)$$
	with $E^a(N\setminus\{0\})\le E^a(N)=0$ and 
	$E^b(N\setminus\{0\})\le E^b(N)=0$. 
	Therefore we obtain the sums of mutually orthogonal projections in $\M$
	\begin{align}
	\label{P3_proof_eq1}
	\1 &=E^a(\{0\})+\sum_{k=1}^r E^a((\spec(a)\setminus\{0\})\cap\Delta_k), \\
	\label{P3_proof_eq2}
	\1 &=E^b(\{0\})+\sum_{k=1}^r E^b((\spec(b)\setminus\{0\})\cap\Delta_k).  
	\end{align}
	For $k=1,\dots,r$ let us denote $p_k:=E^a((\spec(a)\setminus\{0\})\cap\Delta_k)$ and $q_k:=E^b((\spec(b)\setminus\{0\})\cap\Delta_k)$. 
	We have $p_k\sim q_k$ in $\M$ by the hypothesis \eqref{P3_item2}, 
	hence there exists $v_k\in\M$ with $v_k^*v_k=p_k$ and $v_kv_k^*=q_k$. 
	Defining 
	$$v:=v_1+\cdots+v_r\in\M$$
	we have 
	$v^*v=p_1+\cdots+p_r=\bp(a)$ by \eqref{P3_proof_eq1} 
	and similarly $vv^*=\bp(b)$  by \eqref{P3_proof_eq2}.
	
	We now check that $\Vert vav^*-b\Vert\le2\varepsilon$. 
	To this end select $z_k\in\Delta_k$ arbitrary. 
	Then $\vert w-z_k\vert\le\varepsilon$ for all $w\in\Delta_k$. 
	(See Lemma~\ref{L2}.) 
	It is then straightforward to check that 
	$$\Vert (b-z_k\1)q_k\Vert\le\varepsilon\text{ and }\Vert (a-z_k\1)p_k\Vert\le\varepsilon.$$ 
	On the other hand, 
	$q_kv=q_kv_k=v_kv_k^*v_k=v_kp_k$ hence 
	$v^*q_k=p_kv_k^*=p_kv_k^*q_k$, and then 
	$$vav^*q_k
	=vap_kv_k^*q_k
	=v(a-z_k\1)p_kv^*q_k+z_k vp_kv^*q_k
	=v(a-z_k\1)p_kv^*q_k+z_k q_k.$$
	This further implies 
	$$(vav^*-b)q_k=v(a-z_k\1)p_kv^*q_k+(z_k-b)q_k$$
	hence 
	$$\Vert (vav^*-b)q_k\Vert\le \Vert (a-z_k\1)p_k\Vert+\Vert (b-z_k\1)q_k\Vert
	\le2\varepsilon
	\text{ for }k=1,\dots,r.$$
	Moreover, the projection $q_k$ commutes with both operators $vav^*$ and $b$ for $k=1,\dots,r$, hence 
	$$\Vert (vav^*-b)(q_1+\cdots+q_r)\Vert
	\le2\varepsilon.$$
	Finally, since $\bp(vav^*)=v\bp(a)v^*=vv^*vv^*=vv^*=q_1+\cdots+q_r=\bp(b)$, 
	we actually have $\Vert vav^*-b\Vert\le2\varepsilon$, and this completes the proof. 
\end{proof}

\begin{corollary}\label{C4}
	Consider the von Neumann algebra $\M=\Bc(\Hc)$. 
	If $a\in\M^\nor$, then the following conditions are equivalent: 
	\begin{enumerate}[{\rm(i)}]
		\item\label{C4_item1}
		The groupoid orbit of the operator $a$ is closed in $\M$ in the operator norm topology. 
		\item\label{C4_item2} 
		The compact set $\spec(a)$ accumulates at most at the point $0\in\CC$.  
	\end{enumerate}
\end{corollary}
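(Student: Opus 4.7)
The plan is to deduce Corollary~\ref{C4} from the characterization of $\overline{\Oc_a}$ provided by Proposition~\ref{P3}. Since we are in $\M=\Bc(\Hc)$, the Murray--von Neumann equivalence $E^a(\Delta)\sim E^b(\Delta)$ reduces to equality of ranks, so the proposition reads: $b\in\overline{\Oc_a}$ if and only if $\spec(a)\cup\{0\}=\spec(b)\cup\{0\}$ and $\rank E^a(\Delta)=\rank E^b(\Delta)$ for every open $\Delta\subseteq\CC\setminus\{0\}$.

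For the direction \eqref{C4_item2}$\Rightarrow$\eqref{C4_item1}, I would assume that $\spec(a)\setminus\{0\}=\{\lambda_n\}_{n\in N}$ consists of points isolated in $\spec(a)\cup\{0\}$ ($N$ at most countable) and take $b\in\overline{\Oc_a}$. For each $n$ I would pick a small disc $\Delta_n$ around $\lambda_n$ meeting $\spec(a)\cup\{0\}$ only at $\lambda_n$; applying Proposition~\ref{P3} to $\Delta_n$ yields partial isometries $v_n\in\Bc(\Hc)$ with $v_n^*v_n=E^a(\{\lambda_n\})$ and $v_nv_n^*=E^b(\{\lambda_n\})$. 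The initial (resp.\ final) supports of the $v_n$ are mutually orthogonal, so $v:=\sum_n v_n$ converges in the strong operator topology to a partial isometry with $v^*v=\sum_n E^a(\{\lambda_n\})=\bp(a)$ and $vv^*=\bp(b)$. Using $a E^a(\{\lambda_n\})=\lambda_n E^a(\{\lambda_n\})$, a direct computation gives $v_n a v_m^*=\lambda_n\delta_{mn}E^b(\{\lambda_n\})$, hence $vav^*=\sum_n\lambda_n E^b(\{\lambda_n\})=b$, so that $b\in\Oc_a$.

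For \eqref{C4_item1}$\Rightarrow$\eqref{C4_item2} I plan to argue by contrapositive: assuming $\lambda_0\ne 0$ is an accumulation point of $\spec(a)$, I would construct $b\in\overline{\Oc_a}\setminus\Oc_a$ by altering the multiplicity $m_0:=\dim E^a(\{\lambda_0\})\Hc\in\{0,1,\dots,\aleph_0\}$. When $m_0<\aleph_0$, I set $b:=a\oplus\lambda_0\1_\CC$ under an identification $\Hc\oplus\CC\simeq\Hc$. When $m_0=\aleph_0$, I let $P:=\1-E^a(\{\lambda_0\})$, $a_1:=a|_{P\Hc}$; the accumulation hypothesis forces infinitely many spectral points of $\spec(a)\setminus\{\lambda_0\}$ near $\lambda_0$, giving $\dim P\Hc=\aleph_0$, and I define $b:=a_1\oplus\lambda_0\1_\CC$ under $P\Hc\oplus\CC\simeq\Hc$. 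In either case $\dim E^b(\{\lambda_0\})\Hc\ne m_0$. To verify $b\in\overline{\Oc_a}$, the spectra coincide up to $\{0\}$ since $\lambda_0\in\spec(a)$ already; and for open $\Delta\subseteq\CC\setminus\{0\}$ the rank equality is immediate when $\lambda_0\notin\Delta$, while for $\Delta\ni\lambda_0$ both $\rank E^a(\Delta)$ and $\rank E^b(\Delta)$ are infinite because any such $\Delta$ contains infinitely many points of $\spec(a)$ and the corresponding spectral projection is then infinite-dimensional. Finally, if $b$ were in $\Oc_a$, any realizing partial isometry would yield a unitary equivalence between $a|_{\bp(a)\Hc}$ and $b|_{\bp(b)\Hc}$, forcing equal multiplicity at $\lambda_0$ and contradicting the construction.

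The main obstacle will be the case $m_0=\aleph_0$: one must check that excising the full $\lambda_0$-eigenspace still leaves an infinite-dimensional invariant subspace carrying infinite spectral mass near $\lambda_0$. Both facts follow from the observation that a bounded normal operator whose spectrum is infinite must act on an infinite-dimensional Hilbert space, applied to the restriction of $a$ to $E^a(\Delta\setminus\{\lambda_0\})\Hc$ for open neighborhoods $\Delta$ of~$\lambda_0$.
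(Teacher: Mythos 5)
Your proof is correct and takes essentially the same approach as the paper: the direction \eqref{C4_item2}$\Rightarrow$\eqref{C4_item1} (sum the partial isometries from Proposition~\ref{P3} over the isolated nonzero eigenvalues) is identical, and the contrapositive direction likewise works by perturbing the multiplicity at an accumulation point $\lambda_0\ne 0$ and invoking Proposition~\ref{P3} to place the perturbed operator in $\overline{\Oc_a}$. The only difference is cosmetic: you split cases by multiplicity ($m_0<\aleph_0$: append one copy of $\lambda_0$; $m_0=\aleph_0$: strip the eigenspace and append one copy), whereas the paper splits by $p_0\ne 0$ (strip the eigenspace, i.e.\ set $b=(\1-p_0)a$) versus $p_0=0$ (append infinitely many copies); both are valid and detect the multiplicity change via the same rank argument.
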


\begin{proof}
	\eqref{C4_item1}$\implies$\eqref{C4_item2} 
	Assume there exists an accumulation point $z_0\in\spec(a)\setminus\{0\}$ 
	and denote $p_0:=E^a(\{z_0\})\in\Pc(\M)$. 
	Two cases may occur: 
	
	Case 1: $p_0\ne0$, that is, $z_0$ is an eigenvalue of the normal operator~$a$. 
	If we define 
	$$b:=(\1-p_0)a
	=a-z_0p_0
	\in\M$$ 
	then it is easily seen that $\spec(b)=\spec(a)$ since $z_0$ is an accumulation point of $\spec(a)$. 
	Moreover 
	the projections $E^a(\Delta)$ and $E^b(\Delta)$ have equal ranks 
	for every open set $\Delta\subseteq\CC\setminus\{0\}$. 
	Indeed, if $z_0\in\Delta$, 
	then both $E^a(\Delta)$ and $E^b(\Delta)$ have infinite rank, 
	while if $z_0\not\in\Delta$, 
	when $E^a(\Delta)=E^b(\Delta)$. 
	Therefore $b\in\overline{\Oc_a}$ by Proposition~\ref{P3}(\eqref{P3_item2}$\implies$\eqref{P3_item1}). 
	On the other hand, it is easily seen that $z_0$ is not an eigenvalue of $b$, hence $E^b(\{z_0\})=0\ne E^a(\{z_0\})$, 
	which directly implies that $b\not\in\Oc_a$, since $z_0\ne0$. 
	
	Case 2: $p_0=0$. 
	We then write $\Hc=\Hc_1\oplus\Hc_2$ where both summands are infinite-dimensional. 
	For any unitary operator $v\colon \Hc\to\Hc_1$, if we define 
	$$b:=vav^*\oplus z_0\id_{\Hc_2}\in\Bc(\Hc)=\M$$ 
	then we have $\spec(b)=\spec(a)$ and 
	the projections $E^a(\Delta)$ and $E^b(\Delta)$ have equal ranks 
	for every open set $\Delta\subseteq\CC\setminus\{0\}$. 
	Therefore $b\in\overline{\Oc_a}$ by Proposition~\ref{P3}(\eqref{P3_item2}$\implies$\eqref{P3_item1}). 
	On the other hand $E^b(\{z_0\})\ne0 = E^a(\{z_0\})$, 
	which again implies that $b\not\in\Oc_a$ since $z_0\ne0$. 
	
	\eqref{C4_item2}$\implies$\eqref{C4_item1} 
	For arbitrary $b\in\overline{\Oc_a}$ we have $\spec(b)\setminus\{0\}=\spec(a)\setminus\{0\}$ by Lemma~\ref{L1}, 
	hence the hypothesis on $\spec(a)$ implies that $\spec(b)\setminus\{0\}$ is a set of isolated points of $\spec(b)$. 
	It then easily follows by Proposition~\ref{P3}(\eqref{P3_item1}$\implies$\eqref{P3_item2}) that 
	for every  for every $\lambda\in\spec(a)\setminus\{0\}$ 	
	there exists $v_\lambda\in\M$ with $v_\lambda^*v_\lambda=E^a(\{\lambda\})$ and 
	$v_\lambda v_\lambda^*=E^b(\{\lambda\})$. 
	Defining 
	$$v:=\sum_{\lambda\in\spec(a)\setminus\{0\}}v_\lambda\in\M$$
	we then obtain $v^*v=\bp(a)$ and $vv^*=\bp(b)$. 
	On the other hand, since $a$ and $b$ are normal operators, it follows that $E^a(\{\lambda\})$ and $E^b(\{\lambda\})$ are the orthogonal projections onto the eigenspaces of $a$ and $b$ corresponding to the eigenvalue $\lambda$, respectively. 
	This easily implies $vav^*=b$, hence $b\in\Oc_a$. 
	Since this holds for  arbitrary $b\in\overline{\Oc_a}$, it follows that $\Oc_a$ is closed in $\M$ with respect to the operator norm topology. 
\end{proof}

\begin{remark}\label{R5}
	\normalfont 
	In Proposition~\ref{P3}, the implication \eqref{P3_item2}$\implies$\eqref{P3_item1} actually holds true for any $W^*$-algebra $\M$ whose predual is separable, since such an $\M$ can be realized as a von Neumann algebra on a separable Hilbert space, hence Lemma~\ref{L2} is applicable for the normal operators in~$\M$. 
	The method of proof of Proposition~\ref{P3} is inspired by the proof of \cite[Th. 1]{GePa74}.
\end{remark}

\subsection{Smoothness properties of the groupoid orbits}

\begin{lemma}\label{BR05_Th2.2}
If $\M$ is a $W^*$-algebra,   
$a\in\M^\nor$ and $\M_a:=\{x\in\M\mid xa=ax\}$, 
then $\M_a$ is a $W^*$-subalgebra of $\M$, there exists a conditional expectation $\widetilde{E}\colon \M\to\M_a$, 
and  the unitary group $U(\M_a)$ 
is a Banach-Lie subgroup of $U(\M)$.
\end{lemma}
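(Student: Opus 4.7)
The plan is to proceed in three coordinated steps, all hinging on Fuglede's theorem and an averaging argument analogous to the one used in Proposition~\ref{manifold_bdd}.

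First I would verify that $\M_a$ is a $W^*$-subalgebra. It is clearly a weakly closed subalgebra of $\M$ containing the unit, so the only point to check is that it is $*$-invariant. Since $a\in\M^{\nor}$, Fuglede's theorem gives $\{a\}'=\{a,a^*\}'$ in $\Bc(\Hc)$ (for any faithful normal representation $\M\hookrightarrow\Bc(\Hc)$), hence $\M_a=\M\cap\{a,a^*\}'$ is self-adjoint, and then it is a $W^*$-subalgebra of $\M$.

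Next I would construct $\widetilde{E}$ by averaging over the two-parameter group $\alpha\colon\RR^2\to\Bc(\M)$ defined by $\alpha_{(s,t)}(x):=\ee^{\ie(sa+ta^*)}\,x\,\ee^{-\ie(sa+ta^*)}$, just as in the proof of Proposition~\ref{manifold_bdd}. Since $\alpha_{(s,t)}$ is implemented by inner automorphisms of $\M$, each operator $\alpha_{(s,t)}$ is a norm-one isometry of $\M$ whose transpose preserves the predual $\M_*$. Using the isometric identification $\M=(\M_*)^*$, Lemma~\ref{averaging} applied to the abelian semigroup $S=(\RR^2,+)$ (which admits an invariant mean) produces a bounded linear idempotent $\widetilde{E}\colon\M\to\M$ of norm~$1$ whose range is exactly the fixed-point set $\M^{\RR^2}$. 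By Fuglede's theorem again, a normal element $x\in\M$ is fixed by every $\alpha_{(s,t)}$ if and only if $x$ commutes with both $a$ and $a^*$, so $\M^{\RR^2}=\M_a$. Thus $\widetilde{E}\colon \M\to\M_a$ is a norm-one projection onto the $C^*$-subalgebra $\M_a$, and Tomiyama's theorem then guarantees that it is in fact a conditional expectation.

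Finally, for the Banach-Lie subgroup structure, $U(\M)$ and $U(\M_a)$ are both algebraic subgroups of the groups of invertible elements of the Banach algebras $\M$ and $\M_a$, hence they are Banach-Lie groups with Lie algebras $\ie\M^{\sa}$ and $\ie\M_a^{\sa}$, respectively, by \cite{HK77} and \cite[Th.~4.13]{B06}. Since $\widetilde{E}$ is positive and selfadjoint-preserving, its restriction to $\ie\M^{\sa}$ is a continuous idempotent with range $\ie\M_a^{\sa}$, yielding the topological direct sum decomposition
\[
\ie\M^{\sa}=\ie\M_a^{\sa}\oplus\Ker\bigl(\widetilde{E}\vert_{\ie\M^{\sa}}\bigr).
\]
Therefore the Lie algebra of $U(\M_a)$ is a complemented closed subspace of the Lie algebra of $U(\M)$, and it follows by the standard criterion for Banach-Lie subgroups (cf.\ \cite[Th.~4.19]{B06}) that $U(\M_a)$ is a Banach-Lie subgroup of $U(\M)$.

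The main subtlety I anticipate is the first step, where one must pass from the weak closedness of $\M_a$ to its $*$-invariance; this is where Fuglede's theorem is essential, as without normality of $a$ the set $\{x\in\M\mid xa=ax\}$ need not be a $*$-subalgebra and the averaging construction would only produce a projection onto the larger fixed-point set and not onto $\M_a$ itself.
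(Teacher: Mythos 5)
Your overall strategy mirrors the paper's: Fuglede's theorem for self-adjointness of $\M_a$, the averaging Lemma~\ref{averaging} applied via the predual duality $\M\simeq(\M_*)^*$, Tomiyama's theorem to upgrade the norm-one idempotent to a conditional expectation, and the resulting decomposition $\ie\M^{\sa}=\ie\M_a^{\sa}\oplus\Ker\bigl(\widetilde{E}\vert_{\ie\M^{\sa}}\bigr)$ to get the Banach-Lie subgroup structure. Steps 1, 3 and 4 are sound.

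The genuine gap is in the averaging step: for $a\ne a^*$ and $s\ne t$, the operator $sa+ta^*$ is \emph{not} self-adjoint (its anti-self-adjoint part is $\tfrac{s-t}{2}(a-a^*)$), so $\ee^{\ie(sa+ta^*)}$ is not a unitary. Indeed, since $a$ is normal we may write $\ee^{\ie(sa+ta^*)}=\ee^{X}\ee^{\ie Y}$ with $X=\tfrac{\ie(s-t)}{2}(a-a^*)$ and $Y=\tfrac{s+t}{2}(a+a^*)$ both self-adjoint and commuting, and then $\Vert\alpha_{(s,t)}\Vert$ is comparable to $\Vert\ee^{X}\Vert\,\Vert\ee^{-X}\Vert$, which is unbounded as $\vert s-t\vert\to\infty$ whenever $a\ne a^*$. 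Thus the crucial boundedness hypothesis $\sup_{s\in S}\Vert\alpha^s\Vert<\infty$ of Lemma~\ref{averaging} fails and the averaging construction does not go through as written. The paper's own proof of the present lemma sidesteps this by writing $a=a_1+\ie a_2$ with $a_1,a_2\in\M^\sa$ commuting, and averaging over the genuine unitary conjugations $\alpha^{(t_1,t_2)}(x)=\ee^{\ie(t_1a_1+t_2a_2)}x\,\ee^{-\ie(t_1a_1+t_2a_2)}$; the fixed-point set is still $\M_a$ because $x$ commutes with $a_1,a_2$ iff with $a,a^*$ iff, by Fuglede, with $a$. Substituting this family repairs your argument completely. (A smaller slip: the clause ``a normal element $x\in\M$ is fixed by every $\alpha_{(s,t)}$\ldots'' should just read ``an element $x\in\M$''; normality of $x$ plays no role in identifying the fixed points.)
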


\begin{proof}
Since $a\in\M$ and $aa^*=a^*a$, there exist $a_1,a_2\in\M^\sa$ with $a_1a_2=a_2a_1$ and $a=a_1+\ie a_2$. 
Then
$\M_a=\{x\in\M\mid xa_j=a_jx\text{ for }j=1,2\}$ by the Fuglede theorem, 
and this directly implies that $\M_a$ is a $W^*$-subalgebra of $\M$. 
Furthermore, let us consider the abelian (semi)-group $S=(\RR^2,+)$ 
and define 
$$\alpha\colon S\to\Bc(\M),\quad 
\alpha^{(t_1,t_2)}(x):=\ee^{\ie(t_1a_1+t_2a_2)}x\ee^{-\ie(t_1a_1+t_2a_2)}.$$
Then it is easily seen that Lemma~\ref{averaging} is applicable 
with $\Xc=\M$ and $\Xc^S=\M_a$. 
We thus obtain $\widetilde{E}\colon \M\to\M$ with $\widetilde{E}^2=\widetilde{E}$, $\Vert \widetilde{E}\Vert\le 1$, 
and $\Ran \widetilde{E}=\M_a$. 
It follows by Tomiyama's classical theorem that $\widetilde{E}$ is a conditional expectation, in particular $\widetilde{E}(x^*)=\widetilde{E}(x)^*$ for all $x\in\M$. 
This implies $\widetilde{E}(\ug(\M))\subseteq \ug(\widetilde{E})$, 
hence $\widetilde{E}(\ug(\M))=\ug(\M_a)$, $\ug(\M)=\ug(\M_a)\dotplus\Ker(\widetilde{E}\vert_{\ug(\M)})$, 
 and one obtains that 
$U(\M_a)$  is a Banach-Lie subgroup of~$U(\M)$ just as in the proof of Proposition~\ref{manifold_bdd}. 
\end{proof}

\begin{lemma}\label{BGJP19_Th.3.3}
Let $\mathcal{G}\tto M$ be a split Banach-Lie groupoid with its source/target maps $\bs,\bt\colon\Gc\to M$. 
If  $m_0,m_1\in M$,  
then $\mathcal{G}(m_0,m_1):=\bs^{-1}(m_0)\cap \bt^{-1}(m_1)$ is an embedded submanifold of $\mathcal{G}$, 
the isotropy group $\mathcal{G}(m_0):=\bs^{-1}(m_0)\cap \bt^{-1}(m_0)$ is a Banach-Lie group, 
and for every $h_0\in  \mathcal{G}(m_0,m_1)$ the mapping 
$\mathcal{G}(m_0)\to\mathcal{G}(m_0,m_1)$, $g\mapsto h_0g$ is a split immersion. 
\end{lemma}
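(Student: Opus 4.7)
The plan is to combine the split submersion property of $\bs$ with the right/left translation diffeomorphisms of the groupoid, reducing the three claims to a constant-rank analysis at the unit sections. I will use from \cite{BGJP19} that in a split Banach-Lie groupoid, multiplication and inversion are smooth, the source and target maps $\bs,\bt\colon\Gc\to M$ are split submersions, and for every $g\in\Gc$ the right translation $R_g\colon h\mapsto hg$ and the left translation $L_g\colon h\mapsto gh$ are diffeomorphisms between the appropriate source or target fibers.

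First I would consider the smooth map $\varphi:=\bt\vert_{\bs^{-1}(m_0)}\colon\bs^{-1}(m_0)\to M$; note that $\bs^{-1}(m_0)$ is already an embedded split submanifold of $\Gc$ since $\bs$ is a split submersion. The key step is to show that $\varphi$ has constant rank along $\varphi^{-1}(m_1)$, with split kernel and split closed range. For any $h_0\in\Gc(m_0,m_1)$, the identity $\bt(h\cdot h_0^{-1})=\bt(h)$ together with the diffeomorphism $R_{h_0^{-1}}\colon\bs^{-1}(m_0)\to\bs^{-1}(m_1)$, $h_0\mapsto 1_{m_1}$, yields the factorization
$$
\varphi=(\bt\vert_{\bs^{-1}(m_1)})\circ R_{h_0^{-1}}.
$$
Differentiating, $(d\varphi)_{h_0}$ is conjugate via $(dR_{h_0^{-1}})_{h_0}$ to $(d\bt\vert_{\bs^{-1}(m_1)})_{1_{m_1}}$. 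At the unit section $1_{m_1}$, $T_{1_{m_1}}\bs^{-1}(m_1)=\Ker(d\bs)_{1_{m_1}}=\Ag_{m_1}$ is the Lie algebroid fiber of $\Gc$ at $m_1$, and $(d\bt)_{1_{m_1}}$ restricted to it is the anchor $\rho_{m_1}\colon\Ag_{m_1}\to T_{m_1}M$. The \emph{split} hypothesis on $\Gc$ is precisely what guarantees that $\rho_{m_1}$ has split kernel and split closed range, and therefore so does $(d\varphi)_{h_0}$; moreover, since the identification above is uniform in $h_0\in\varphi^{-1}(m_1)$, the rank of $\varphi$ is constant on this preimage.

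An application of the constant rank theorem in the Banach setting then yields that $\Gc(m_0,m_1)=\varphi^{-1}(m_1)$ is an embedded split submanifold of $\bs^{-1}(m_0)$, and hence of $\Gc$. Specializing to $m_1=m_0$, $\Gc(m_0)$ is an embedded split submanifold of $\Gc$; it is preserved by the smooth multiplication and inversion of $\Gc$ (which both preserve the conditions $\bs=\bt=m_0$), and their restrictions to $\Gc(m_0)$ remain smooth, turning $\Gc(m_0)$ into a Banach-Lie group. Finally, the map $\Lambda_{h_0}\colon\Gc(m_0)\to\Gc(m_0,m_1)$, $g\mapsto h_0g$, is the restriction of $L_{h_0}\colon\bt^{-1}(m_0)\to\bt^{-1}(m_1)$, a diffeomorphism of $\bt$-fibers; its image equals $\Gc(m_0,m_1)$, with inverse the restriction of $L_{h_0^{-1}}$. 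As a restriction of a diffeomorphism between two embedded split submanifolds of $\Gc$, $\Lambda_{h_0}$ is a smooth embedding, and in particular a split immersion into~$\Gc$.

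The main obstacle I expect is verifying that $(d\varphi)_{h_0}$ has split closed range and split kernel at every $h_0\in\varphi^{-1}(m_1)$; this is where the split hypothesis on the Banach-Lie groupoid is used decisively, via the splitting of each anchor $\rho_m$ at the unit section. Without it the constant rank theorem would not apply, and one could hope only for an immersed, rather than embedded, structure on $\Gc(m_0,m_1)$.
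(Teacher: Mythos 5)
Since the paper's ``proof'' of this lemma is nothing more than the one-line citation ``This is a by-product of the proof of \cite[Th.\ 3.3]{BGJP19},'' you are giving a self-contained argument where the paper gives none, and your translation tricks (the global factorization $\varphi=(\bt\vert_{\bs^{-1}(m_1)})\circ R_{h_0^{-1}}$, and the reduction $\Gc(m_0,m_1)=L_{h_0}(\Gc(m_0))$) are sound and are very much in the spirit of the arguments one would expect from \cite{BGJP19}. The last paragraph (passing from the submanifold structure on $\Gc(m_0)$ to the Banach--Lie group structure, and obtaining that $g\mapsto h_0g$ is a split immersion because it is the restriction of a fiber diffeomorphism $L_{h_0}$ between two embedded split submanifolds) is correct modulo the earlier steps.

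The gap is in the application of the constant-rank theorem. You only establish that the rank of $\varphi=\bt\vert_{\bs^{-1}(m_0)}$ is ``constant along $\varphi^{-1}(m_1)$,'' i.e.\ that for each $h\in\varphi^{-1}(m_1)$ the derivative $(d\varphi)_h$ is conjugate via derivatives of right translations to the anchor $\rho_{m_1}\colon\Ag_{m_1}\to T_{m_1}M$. But the constant-rank (subimmersion) theorem requires rank constancy --- in the Banach sense, a local factorization through a split submersion followed by a split immersion --- on a \emph{full neighborhood} of $\varphi^{-1}(m_1)$ in $\bs^{-1}(m_0)$, not merely along the fiber itself. For $h$ near but not on $\varphi^{-1}(m_1)$, the same right-translation computation conjugates $(d\varphi)_h$ to the anchor $\rho_{\bt(h)}$ at a different point of the orbit of $m_0$, so you still owe an argument that the anchor has locally constant rank (with continuously-varying split kernel and complement) along the orbit. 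In finite dimensions this is a genuine theorem, proved using Stefan--Sussmann theory or the existence of local bisections, and is precisely the kind of non-trivial input that \cite[Th.\ 3.3]{BGJP19} is designed to supply in the Banach setting. A second, smaller issue is your interpretation of the ``split'' hypothesis: you take it to mean that every anchor $\rho_m$ has split kernel and split closed range, but you should verify that this is indeed what ``split Banach--Lie groupoid'' means in \cite{BGJP19} (their ``split'' condition concerns the structure maps, and it is not obvious a priori that it encodes splitting of the anchor at every unit). Without closing these two points the argument does not yet prove the lemma; it should instead be presented as a sketch whose missing analytic input is exactly what the cited theorem provides.
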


\begin{proof}
This is a by-product of the proof of \cite[Th. 3.3]{BGJP19}.
\end{proof}

In the proof of the following results we use ideas from the proof of \cite[Th. 3.3(iii)]{BGJP19} on smooth orbits of some Banach-Lie groupoids.  We emphasize however that $\Vc(\M)\ast \M^\sa\tto \M^\sa$ is {\it not} a Banach-Lie groupoid, since for instance its space of arrows $\Vc(\M)\ast \M^\sa$ is not a Banach manifold.

\begin{definition}
\label{princ}
Let $\M$ be a $W^*$-algebra. 
Consider the isotropy group of $a$ with respect to the groupoid action  $\Vc(\M)\ast \M^\nor\tto \M^\nor$, which is given by
$$K_a:=\{w\in\Vc(\M)\mid w^*w=p_a,\ waw^*=a\}$$
with identity element $p_a\in K_a$. If we let
$$\Vc_a=\{v\in\Vc(\M)\mid v^*v=p_a\}$$
we have $K_a\subseteq \Vc_a$ and one has a natural right action of the group $K_a$ on $\Vc_a$ by 
\begin{equation}
\label{smooth_proof_eq1}
\Vc_a\times K_a\to \Vc_a,\quad (v,w)\mapsto vw.
\end{equation} 
\end{definition}

We emphasize that the group action \eqref{smooth_proof_eq1} is \emph{not} transitive.

\begin{remark}\label{quotient1}
In Definition~\ref{princ}, 
 the set $\Vc_a$ is a closed embedded Banach submanifold of $\M$,  since it is a fiber of the mapping $\Vc(\M)\to\Pc(\M)$, $v\mapsto v^*v$, a submersion of Banach manifolds by \cite[Prop. 3.4]{AnCoMb05}. 
 The construction of a specific smooth atlas of $\Vc_a$ can be found in  \cite[Subsect. 2.5]{BO19}, 
 where the set $P_0$ coincides with the present $\Vc_a$ if $p_0=p_a$.

Note that for every $w\in K_a$ one has $ww^*=p_a$ by~\eqref{6Apr2019_var}. Thus $K_a$ is a subgroup of the unitary group of the $W^*$-algebra $p_a\M p_a$. 
Using Lemma~\ref{BR05_Th2.2} we then obtain that $K_a$ is a Banach-Lie subgroup of $U_a=U(p_a\M p_a)$. 

Now note that the mapping $\Psi\colon \Vc_a/K_a\to \Oc_a$, $\Psi(vK_a):=vav^*$, is a well-defined bijection. Therefore to give the orbit a differentiable structure it suffices to obtain one for the quotient space.
\end{remark}

\begin{proposition}\label{smooth}
Let $\M$ be a $W^*$-algebra. 
If $a\in\M^\nor$, then its groupoid orbit $\Oc_a$ has the structure of a Banach manifold 
for which the inclusion map $\Oc_a\hookrightarrow\M$ is smooth 
and its tangent map at every point of the orbit is injective. 
Moreover the action map $\alpha_a\colon\Vc_a\to \Oc_a$, $v\mapsto vav^*$, is a smooth principal bundle whose structural group is~$K_a$. 
\end{proposition}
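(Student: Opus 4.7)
My plan is to transport a manifold structure to $\Oc_a$ via the bijection $\Psi\colon\Vc_a/K_a\to\Oc_a$, $vK_a\mapsto vav^*$, by realizing the quotient map $q_a\colon\Vc_a\to\Vc_a/K_a$ as a smooth principal $K_a$-bundle. Once this is in place, $\alpha_a=\Psi\circ q_a$ is a smooth principal bundle and smoothness of the inclusion $\iota\colon\Oc_a\hookrightarrow\M$ follows because $\iota\circ\alpha_a\colon v\mapsto vav^*$ is manifestly smooth and $\alpha_a$ is a submersion. First I would identify $K_a$: from $v^*v=p_a$ and $vav^*=a$ the identity~\eqref{6Apr2019_var} forces $vv^*=\bp(a)=p_a$, so $K_a$ coincides with the unitary group of $(p_a\M p_a)\cap\{a\}'$. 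Applying Lemma~\ref{BR05_Th2.2} inside $p_a\M p_a$ (in which $a$ is normal with support equal to the unit element $p_a$) furnishes a conditional expectation $\widetilde{E}\colon p_a\M p_a\to(p_a\M p_a)\cap\{a\}'$ and a topological splitting $\ug(p_a\M p_a)=\mathfrak{k}_a\dotplus\mathfrak{m}_a$ with $\mathfrak{k}_a=\lie(K_a)$ and $\mathfrak{m}_a=\Ker(\widetilde{E}\vert_{\ug(p_a\M p_a)})$. The right action of $K_a$ on $\Vc_a$ is smooth and free ($vw=v$ implies $p_aw=v^*v=p_a$, hence $w=p_a$), and its orbits are the fibres of $\alpha_a$: if $vav^*=v_0av_0^*$ then $w:=v_0^*v$ lies in $K_a$ and $v_0w=v_0v_0^*v=vv^*v=v$, using $v_0v_0^*=\bp(v_0av_0^*)=\bp(vav^*)=vv^*$.

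Next I would compute $T_{v_0}\Vc_a$ by differentiating the identities $v^*v=p_a$ and $vp_a=v$ at $v_0$, obtaining the topological direct sum
$$T_{v_0}\Vc_a=v_0\,\ug(p_a\M p_a)\dotplus(\1-v_0v_0^*)\M p_a,$$
in which the tangent space to the fibre $v_0K_a$ through $v_0$ is exactly $v_0\mathfrak{k}_a$. Combining with the splitting from the previous step produces a closed ``horizontal'' complement
$$\Nc_{v_0}:=v_0\mathfrak{m}_a\dotplus(\1-v_0v_0^*)\M p_a,\qquad T_{v_0}\Vc_a=v_0\mathfrak{k}_a\dotplus\Nc_{v_0}.$$
Using a submanifold chart of $\Vc_a$ at $v_0$ as in Remark~\ref{quotient1} and \cite{AnCoMb05, BO19}, I would realize $\Nc_{v_0}$ as the tangent space of a smooth local slice $S_{v_0}\subseteq\Vc_a$ through $v_0$. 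The multiplication $S_{v_0}\times K_a\to\Vc_a$, $(s,w)\mapsto sw$, then has invertible differential at $(v_0,p_a)$ and is a local diffeomorphism by the Banach inverse function theorem; translating along the $K_a$-action delivers the desired local trivializations of $q_a$ as a smooth principal $K_a$-bundle, and $\Oc_a\simeq\Vc_a/K_a$ inherits a compatible Banach manifold structure.

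To verify injectivity of $T_{b_0}\iota$ at $b_0=v_0av_0^*$, it suffices to show that $\Ker(d\alpha_a\vert_{v_0})=v_0\mathfrak{k}_a$, where $d\alpha_a\vert_{v_0}(X)=Xav_0^*+v_0aX^*$. On the summand $v_0\,\ug(p_a\M p_a)$ one gets $d\alpha_a\vert_{v_0}(v_0Y)=v_0[Y,a]v_0^*$, which vanishes iff $Y\in\mathfrak{k}_a$; on the summand $(\1-v_0v_0^*)\M p_a$, the vanishing $X_2av_0^*+v_0aX_2^*=0$ projected onto the corners $v_0v_0^*\M$ and $(\1-v_0v_0^*)\M$ forces $X_2a=0$, whence $X_2p_a=0$ by the defining property of the support projection, so $X_2=0$. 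The main obstacle will be the slice construction: although the splitting of $T_{v_0}\Vc_a$ is algebraically clean, honestly fitting a slice $S_{v_0}$ \emph{inside} $\Vc_a$ tangent to $\Nc_{v_0}$ and verifying that multiplication by $K_a$ is a local diffeomorphism onto a $\Vc_a$-neighbourhood of $v_0$ requires combining the polar-decomposition-type charts of $\Vc_a$ of \cite{AnCoMb05, BO19} with the operator-algebraic complementarity furnished by $\widetilde{E}$, and this compatibility is the technical heart of the argument.
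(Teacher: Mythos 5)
Your preparatory steps are essentially correct and coincide with what the paper records in Remark~\ref{quotient1} and Lemma~\ref{time}: the identification of $K_a$ as a unitary group inside $p_a\M p_a$ and its Banach--Lie subgroup structure via Lemma~\ref{BR05_Th2.2}, the identification of the fibres of $\alpha_a$ with the $K_a$-orbits, the decomposition of $T_{v_0}\Vc_a$, and the corner computation showing that $\Ker(T_{v_0}\alpha_a)$ is exactly $v_0\,\lie(K_a)$. The problem is that the actual content of the proposition is the existence of the quotient manifold structure, and that is precisely the step you leave open: you never construct the slice $S_{v_0}\subseteq\Vc_a$, nor verify that $S_{v_0}\times K_a\to\Vc_a$ is a diffeomorphism onto a neighbourhood of $v_0$; you yourself label this ``the technical heart of the argument''. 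So the proposal is a plan with its load-bearing step missing. The paper closes exactly this gap by invoking the quotient theorem \cite[Lemma 2.5]{BGJP19}: one checks that the right $K_a$-action on $\Vc_a$ is smooth, free and \emph{proper} (properness via \cite[Lemma 2.3]{BGJP19}), and that each orbit map $L_v\colon K_a\to\Vc_a$, $w\mapsto vw$, is a split immersion --- the latter because $L_v$ is the restriction to the Banach--Lie subgroup $K_a\subseteq U_a=U(p_a\M p_a)$ of $\widetilde L_v\colon U_a\to\Vc_a$, which is a split immersion by Lemma~\ref{BGJP19_Th.3.3}. If you insist on a hand-made slice argument you would essentially be reproving a special case of that lemma, and the compatibility you flag (charts of $\Vc_a$ versus the splitting coming from $\widetilde E$) is exactly what must be supplied.

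A second omission: even granting local slices and local trivializations, this does not yet make $\Vc_a/K_a$ a Banach manifold --- you must also show the quotient topology is Hausdorff, equivalently that the orbit equivalence relation is closed (this is what the properness hypothesis of \cite[Lemma 2.5]{BGJP19} encodes, and what the paper verifies). It can be done directly here: if $v_n\to v$ and $v_nw_n\to u$ with $w_n\in K_a$, then $w_n=p_aw_n=v_n^*v_nw_n\to v^*u$, the limit lies in the norm-closed set $K_a$, and $u=v(v^*u)$; but nothing of this sort appears in your argument. In short, the skeleton and the auxiliary computations are right, but the two decisive points --- local triviality via an honest slice, and properness/Hausdorffness of the quotient --- are asserted as obstacles rather than proved, so the proposal does not yet establish the proposition.
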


\begin{proof}
To obtain the Banach manifold structure of $\Oc_a\simeq \Vc_a/K_a$ for which the  quotient map $\Vc_a\to \Vc_a/K_a$ is a submersion, we can use \cite[Lemma 2.5]{BGJP19}.  To this end we must check that the following conditions are satisfied: 
\begin{enumerate}[{\rm(a)}]
	\item\label{smooth_proof_item1} The group action \eqref{smooth_proof_eq1} is smooth, free, and proper. 
	\item\label{smooth_proof_item2} For every $v\in \Vc_a$ the mapping $L_v\colon K_a\to \Vc_a$, $w\mapsto vw$, is a split immersion. 
\end{enumerate}

\eqref{smooth_proof_item1} 
The group action \eqref{smooth_proof_eq1} is smooth since 
it is the restriction of a bilinear map, which is smooth. 
To check that the group action \eqref{smooth_proof_eq1} is proper, one may use \cite[Lemma 2.3]{BGJP19}.
Finally, the group action \eqref{smooth_proof_eq1}  is free since it is the restriction of a groupoid multiplication. 

\eqref{smooth_proof_item2} 
Recall that $U_a=U(p_a\M p_a)$, and note that $U_a$ is the isotropy group of $p\in\Pc(\M)$ 
in the Banach-Lie groupoid $\Vc(\M)\tto\Pc(\M)$.  Then  the right group action \eqref{smooth_proof_eq1} extends to the smooth action 
$$\Vc_a\times U_a\to \Vc_a,\quad (v,u)\mapsto vu,$$
which leads to the smooth mapping $\widetilde{L}_v\colon U_a\to \Vc_a$, $u\mapsto vu$. 
It follows by Lemma~\ref{BGJP19_Th.3.3} that $\widetilde{L}_v$ is a split immersion. 
Since we noted in the remark before this proposition that $K_a$ is a Banach-Lie subgroup of $U_a$ 
and $L_v=\widetilde{L}_v\vert_{K_a}$, it then follows that the mapping $L_v\colon K_a\to \Vc_a$ is a split immersion, as claimed.

It remains to show that the inclusion map  $\iota\colon \Oc_a\hookrightarrow\M$ is smooth 
and its tangent map at every point is injective.  
To this end we use the commutative diagram 
$$\xymatrix{
\Vc_a \ar[d]_{q} \ar[r]^{\alpha_a} & \M \\
\Vc_a/K_a \ar[r]^{\Psi} & \Oc_a \ar@{^{(}->}[u]_{\iota}
}$$
in which $q\colon \Vc_a\to \Vc_a/K_a$ is the quotient map and $\alpha_a\colon \Vc_a\to\M$, $\alpha_a(v):=va v^*$. Here $q$ is a submersion and $\Psi$ is by definition a diffeomorphism. 
Therefore, in order to complete the proof, it suffices to show that $\alpha$ is smooth and for every $v\in \Vc_a$ one has 
\begin{equation*}
\Ker\, (T_v(\alpha_a))= \Ker\, (T_v q)
\end{equation*}
which is clear. 
\end{proof}

\begin{example}
\label{grass}
\normalfont 
As mentioned at the beginning of this section, we work in the setting of Banach-Lie groupoids as developed in \cite{BGJP19}. 
In particular, the model spaces of the connected components of the Banach manifolds need not be isomorphic to each other. 
To illustrate this point, we specialize Proposition~\ref{smooth} to the case when $a\in\M^\nor$ is an injective operator, or, equivalently, $p_a=\1\in\M$. 
Then 
$$\Vc_a=\{v\in\M\mid v^*v=\1\}$$
is the set of all isometries in~$\M$. 
 
If $\M$ is a finite $W^*$-algebra, then $\Vc_a=\U(\M)$, hence 
$$\Oc_a=\{vav^*\mid v\in\U(\M)\}\simeq \U(\M)/\U(\M_a)$$
and $K_a=\U(\M_a)$, 
where $\M_a$ is defined in Lemma~\ref{BR05_Th2.2}. 
Thus $\Oc_a$ is a connected Banach manifold in this case, since $\U(\M)$ is a connected Banach-Lie group. 

If however $\M=\Bc(\Hc)$ for a separable infinite-dimensional Hilbert space~$\Hc$, 
let us further assume $a=\1$. 
Then 
\begin{align*}
\Oc_a
&=\{vav^*\mid v\in\Vc_a\}=\{vv^*\mid v\in\Bc(\Hc),\ v^*v=\1\} \\
&=\{p\in\Bc(\Hc)\mid p=p^2=p^*,\ \dim (p\Hc)=\infty\}.
\end{align*}
and this has the structure of a Banach manifold by Proposition~\ref{smooth}. 
This is actually the union of some of the connected components of the Grassmann manifold $\Pc(\M)$ for $\M=\Bc(\Hc)$. 
Specifically, for any $n\ge 1$, let us fix a projection $p_n\in\Bc(\Hc)$ with $\dim(p_n\Hc)=n$, 
and let us also fix a projection $p_\infty\in\Bc(\Hc)$ with $\dim(p_\infty\Hc)=\dim((\1-p_\infty)\Hc)=\infty$. 
Then we have the disjoint union of connected components 
$$\Oc_a=\Oc(p_\infty)\sqcup\bigsqcup_{n\ge 1}\Oc(\1-p_n).$$
We note that for any projection $p\in\Bc(\Hc)$ its unitary orbit $\Oc(p)$ 
is diffeomorphic tothe smooth homogeneous space $\U(\Hc)/\U(\{p\}')$ and we have an isomorphism of Banach-Lie groups $\U(\{p\}')\simeq \U(p\Hc)\times\U((\1-p)\Hc)$. 
\end{example}

\begin{definition}[two topologies]\label{sometops} 
Let $\M$ be a $W^*$-algebra, $a\in\M^\nor$, and 
	recall the bijection $\Psi\colon \Vc_a/K_a\to \Oc_a$ from Remark~\ref{quotient1}. 
The manifold topology of the groupoid orbit~$\Oc_a$ is the quotient topology for the open map $\alpha_a:\Vc_a\to  \Oc_a$, $\alpha_a(v):=vav^*$, hence we will denote the corresponding topological space by $(\Oc_a,\tau_q)$. 
Since $\Vc_a$ has the norm topology, a basis of neighborhoods for this topology at the point $q(p_a)=a\in \Oc_a$ is given by
$$
W_{\varepsilon}:=
\{\alpha_a(w): w\in\Vc_a, 
\, \|w-p_a\|<\varepsilon\}
=\{w aw^*: 
w^*w=p_a, 
\, \|w-p_a\|<\varepsilon\}.
$$
For given $v\in \Vc_a$ and $b=\alpha_a(v)=v a v^*\in \Oc_a$, a basis of neighborhoods around $b$ is given by the sets
$$Z_\varepsilon:=
\{\alpha_a(z): z\in\Vc_a,\, \|z-v \|<\varepsilon\}
=
\{z az^* : z^*z=p_a,\, 
\|z-v \|<\varepsilon\}.
$$
The norm topology of $\Oc_a$ will be denoted by $(\Oc_a,\tau_{\M})$. 
It is clear that the identity map $i:(\Oc_a,\tau_q)\to (\Oc_a,\tau_{\M})$ is continuous, for, if $w_n\in\Vc_a$ and $\lim\limits_{n\to\infty}\Vert w_n-p_a\Vert=0$, then $\lim\limits_{n\to\infty}\Vert w_n a w_n^*-a\Vert= 0$.
\end{definition}

The topology of $\Oc_a$ considered in the following corollary is the manifold topology, i.e., the quotient topology described above:

\begin{corollary}\label{smcl0}
	Let $\M$ be a $W^*$-algebra. 
	Let $a\in\M^\nor$,  let $p_a$ be its range projection. Let 
	$$\Vc_a=\{v\in\Vc(\M)\mid v^*v=p_a\}\textrm{ and }K_a:=\{v\in \Vc_a\mid vav^*=a\}, 
	$$ 
	and  let $\Oc(a)=\{uau^*\mid u\in U(\M)\}$ be the unitary orbit of~$a$.  Then $\Vc_a$ is a submanifold of $\M$, $K_a$ is a Banach-Lie subgroup and a submanifold of $\Vc_a$, and 
	the sets $\Oc(a)\subseteq \Oc_a$ have structures of Banach manifolds having the following properties: 
	\begin{enumerate}[{\rm(i)}]
		\item\label{smcl0_item1} The mapping $\alpha_a\colon \colon \Vc_a\to \Oc_a$, $v\mapsto vav^*$, is a principal bundle whose structural group is $K_a$. 
		\item\label{smcl0_item2} The inclusion map $\Oc_a\hookrightarrow \M$ is smooth and its tangent map at every point is injective. 
		\item\label{smcl0_item3} The inclusion map $\iota_a\colon \Oc(a)\hookrightarrow\Oc_a$ is smooth and its tangent map at every point is injective. 
	\end{enumerate}
\end{corollary}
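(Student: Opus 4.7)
The proof is essentially an assembly of results already established in the section together with one new piece of work for part (iii). My plan is first to dispatch the structural claims about $\Vc_a$ and $K_a$ by appealing to prior remarks and to Proposition~\ref{smooth}, then to build the Banach manifold structure on the unitary orbit $\Oc(a)$ via Lemma~\ref{BR05_Th2.2}, and finally to produce the smooth inclusion $\iota_a$ together with the required injectivity of its tangent map.

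First I would recall from Remark~\ref{quotient1} that $\Vc_a$ is a closed embedded submanifold of $\M$ as the fiber over $p_a$ of the submersion $v\mapsto v^*v$ (using \cite[Prop.~3.4]{AnCoMb05}), and that $K_a$ is a Banach-Lie subgroup of $U_a=U(p_a\M p_a)$ via Lemma~\ref{BR05_Th2.2} applied to the corner $W^*$-algebra $p_a\M p_a$. That $K_a$ is moreover a submanifold of $\Vc_a$ is then free: Proposition~\ref{smooth} asserts that $\alpha_a\colon\Vc_a\to\Oc_a$ is a smooth principal $K_a$-bundle, so the fiber $\alpha_a^{-1}(a)=K_a$ is a closed embedded submanifold of $\Vc_a$. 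Proposition~\ref{smooth} also provides assertions (i) and (ii) directly.

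For (iii), the Banach manifold structure on $\Oc(a)$ comes by identifying $\Oc(a)\simeq U(\M)/U(\M_a)$ and invoking Lemma~\ref{BR05_Th2.2}, which furnishes the split decomposition $\ug(\M)=\lie(U(\M_a))\dotplus \Ker\widetilde{E}$ needed to turn the quotient into a smooth homogeneous space with $U(\M)\to\Oc(a)$ a submersion. To produce $\iota_a$, I would introduce the smooth lift
\[
\phi\colon U(\M)\to\Vc_a, \qquad \phi(u):=up_a,
\]
which is well defined since $(up_a)^*(up_a)=p_au^*up_a=p_a$, and smooth as a restriction of bilinear multiplication. Using $p_aa=ap_a=a$ one computes $\alpha_a(\phi(u))=up_a a(up_a)^*=uau^*$, hence the square
\[
\xymatrix{
U(\M)\ar[r]^{\phi}\ar[d] & \Vc_a\ar[d]^{\alpha_a}\\
\Oc(a)\ar[r]^{\iota_a} & \Oc_a
}
\]
commutes. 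Since both vertical arrows are surjective submersions, $\iota_a$ is smooth by descent.

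For injectivity of $T_a\iota_a$ I would compute at the identity in $U(\M)$: $T_\1\phi(x)=xp_a$ for $x\in\ug(\M)$, and then $T_{p_a}\alpha_a(xp_a)=xp_a\cdot a+a(xp_a)^*=xa-ax=[x,a]$, using $p_aa=a$ and $x^*=-x$. Therefore $T_\1(\alpha_a\circ\phi)=-\ad a\vert_{\ug(\M)}$, whose kernel is $\ug(\M)\cap\{a\}'=\lie(U(\M_a))$, which is exactly the vertical direction of the left submersion $U(\M)\to\Oc(a)$. Hence the induced map $T_a\iota_a$ is injective, and translating by the action of $U(\M)$ propagates injectivity to every point of $\Oc(a)$. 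No step here presents a real obstacle since the groundwork has been laid: the only mildly delicate point is the tangent computation, but once one observes that $\Ker(\ad a\vert_{\ug(\M)})$ matches $\lie(U(\M_a))$ by definition, the injectivity follows immediately.
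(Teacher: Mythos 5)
Your proof is correct and follows the same overall plan as the paper: parts (i)--(ii) by citing Proposition~\ref{smooth}, and part (iii) via the commutative square built from the lift $u\mapsto up_a$ into $\Vc_a$. The one point where you diverge is the injectivity of $T\iota_a$. The paper factors the standard inclusion $\Oc(a)\hookrightarrow\M$ as $\tilde\iota_a\circ\iota_a$ and concludes injectivity of $T\iota_a$ from the known injectivity of the tangent to $\Oc(a)\hookrightarrow\M$ together with part (ii); you instead compute $T_\1(\alpha_a\circ\phi)(x)=[x,a]$ directly, identify its kernel with $\lie(U(\M_a))=\Ker T_\1 q_0$, and then translate by the group action. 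Both are valid; the paper's route is shorter because it reuses (ii) and an established fact about unitary orbits, while yours is more self-contained at the cost of an explicit tangent computation that repeats what is already implicit in the homogeneous-space description of $\Oc(a)$. No gaps.
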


\begin{proof} 	
\eqref{smcl0_item1}--\eqref{smcl0_item2} 
These assertions follow by Proposition~\ref{smooth}.  

\eqref{smcl0_item3} 
The mapping $\kappa\colon U(\M)\to \Vc_a$, $\kappa(u):=up$,  is smooth since~$\Vc_a$ is a submanifold of $\M$. 
Moreover, one has $\Oc(a)=q_a(\kappa(U(\M)))\subseteq q_a(\Vc_a)=\Oc_a$ 
and the diagram 
$$\xymatrix{U(\M) \ar[d]_{q_0} \ar[r]^{\kappa}& \Vc_a \ar[d]^{q_a}\\
\Oc(a) \ar@{^{(}->}[r]^{\iota_a}&  \Oc_a}$$
is commutative and its vertical arrows are submersions, where $q_0(u):=uau^*$ for all $u\in U(\M)$. 
Since $\kappa$ is smooth, it then follows that $\iota_a$ is smooth. 
Moreover, since the inclusion map $\tilde{\iota}_a\colon \Oc_a\hookrightarrow\M$ is smooth and its tangent map is injective by \eqref{smcl0_item2}, while $\tilde{\iota}_a\circ\iota_a$ is the inclusion map  $\Oc(a)\hookrightarrow \M$ whose tangent map is well known to be injective at every point of $\Oc(a)$, it then follows that the tangent map of $\iota_a$ is injective at every point of $\Oc(a)$. 
\end{proof}

Recall that $\Vc(\Hc)$ denotes the set of partial isometries in $\Bc(\Hc)$.  Then, combining Corollary \ref{smcl0} with Lemma \ref{grpd}, we obtain the following fact. 

\begin{corollary}\label{smcl}
Let $a\in\Kc(\Hc)^\nor$ 
and  
$$\Vc_a:=\{v\in\Vc(\Hc)\mid v^*v=p_a\}\textrm{ and }K_a:=\{v\in \Vc_a\mid vav^*=a\}.
$$ 
Then $\Vc_a$ is a submanifold of $\Bc(\Hc)$, $K_a$ is a Banach-Lie group and a submanifold of $\Vc_a$, and 
the set $\overline{\Oc_{\Bc(\Hc)}(a)}^{\Vert\cdot\Vert}$ has the structure of a Banach manifold having the following properties: 
\begin{enumerate}[{\rm(i)}]
\item The mapping $\Vc_a\to \overline{\Oc_{\Bc(\Hc)}(a)}^{\Vert\cdot\Vert}$,  $v\mapsto vav^*$, is a principal bundle whose structural group is $K_a$. 
\item The inclusion map $\overline{\Oc_{\Bc(\Hc)}(a)}^{\Vert\cdot\Vert}\hookrightarrow \Kc(\Hc)$ is smooth and its tangent map at every point is injective. 
\item The inclusion map $\Oc_{\Bc(\Hc)}(a)\hookrightarrow \overline{\Oc_{\Bc(\Hc)}(a)}^{\Vert\cdot\Vert}=\Oc_a$ 
is smooth and its tangent map at every point is injective. 
\end{enumerate}
\end{corollary}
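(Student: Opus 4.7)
The plan is to reduce this corollary to Corollary~\ref{smcl0} applied to the von Neumann algebra $\M=\Bc(\Hc)$, via the key identification $\overline{\Oc_{\Bc(\Hc)}(a)}^{\Vert\cdot\Vert}=\Oc_a$ valid for compact $a$.
Concretely, I would first invoke Lemma~\ref{grpd}: since $a\in\Kc(\Hc)^\nor$, an operator $b\in\Bc(\Hc)$ lies in the uniform closure $\overline{\Oc_{\Bc(\Hc)}(a)}^{\Vert\cdot\Vert}$ if and only if there is $v\in\Vc_a$ with $b=vav^*$, so the uniform closure coincides with the groupoid orbit $\Oc_a=\{vav^*\mid v\in\Vc_a\}$. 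Thus the sets appearing in the statement are literally the sets produced by Corollary~\ref{smcl0}.

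With this identification in hand, the assertions about $\Vc_a$ being a submanifold of $\Bc(\Hc)$, about $K_a$ being a Banach-Lie subgroup and a submanifold of $\Vc_a$, the Banach manifold structure on $\Oc_a$, the principal bundle statement in~(i), and the smoothness/injectivity of the tangent map of $\Oc_{\Bc(\Hc)}(a)\hookrightarrow\Oc_a$ in~(iii) are \emph{exactly} Corollary~\ref{smcl0} specialized to $\M=\Bc(\Hc)$; no new argument is required.

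The only new point is that in~(ii) we must refine the inclusion into $\Bc(\Hc)$ provided by Corollary~\ref{smcl0}\eqref{smcl0_item2} to an inclusion into the smaller space $\Kc(\Hc)$.
For this, I would look at the smooth ``action map'' $\alpha_a\colon \Vc_a\to\Bc(\Hc)$, $\alpha_a(v):=vav^*$. Since $a$ is compact and $\Kc(\Hc)$ is a two-sided norm-closed ideal of $\Bc(\Hc)$, the image of $\alpha_a$ lies in $\Kc(\Hc)$; moreover $\alpha_a$ is polynomial of degree $2$ in $v$, so all its derivatives, e.g.\ $(T_v\alpha_a)(w)=wav^*+vaw^*$, also take values in $\Kc(\Hc)$ for the same reason. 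Hence $\alpha_a$ is in fact smooth as a map into the Banach space $\Kc(\Hc)$. Using that the principal bundle projection $\Vc_a\to\Oc_a$ from~(i) is a smooth submersion, this factors to show that the inclusion $\Oc_a\hookrightarrow\Kc(\Hc)$ is smooth. Finally, injectivity of its tangent map at every point is inherited from the injectivity of the tangent map of $\Oc_a\hookrightarrow\Bc(\Hc)$ (Corollary~\ref{smcl0}\eqref{smcl0_item2}), since $\Kc(\Hc)\hookrightarrow\Bc(\Hc)$ is itself a continuous injection.

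The only place where any genuine argument is needed is this last paragraph; everything else is a direct citation. I do not anticipate a real obstacle, as the two-sided ideal property of $\Kc(\Hc)$ makes the refinement of codomain essentially automatic.
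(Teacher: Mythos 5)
Your proposal is correct and follows the paper's own proof exactly: identify $\overline{\Oc_{\Bc(\Hc)}(a)}^{\Vert\cdot\Vert}$ with the groupoid orbit $\Oc_a$ via Lemma~\ref{grpd} and then cite Corollary~\ref{smcl0}. Your additional paragraph on refining the codomain from $\Bc(\Hc)$ to $\Kc(\Hc)$ in item~(ii) fills in a point the paper leaves implicit, and the reasoning there is sound (in fact it suffices to note that a smooth map into $\Bc(\Hc)$ whose range lies in the closed subspace $\Kc(\Hc)$ is automatically smooth into $\Kc(\Hc)$).
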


\begin{proof}
It follows by Lemma~\ref{grpd} that the mapping $\Vc_a\to \overline{\Oc_{\Bc(\Hc)}(a)}^{\Vert\cdot\Vert}$, $v\mapsto vav^*$, is surjective, hence $\overline{\Oc_{\Bc(\Hc)}(a)}^{\Vert\cdot\Vert}=\Oc_a$. 
The other assertions then follow by Corollary~\ref{smcl0}. 
\end{proof}

\begin{remark}[weak immersions versus immersions] 
As mentioned in Corollary~\ref{smcl0}, 
if $\M$ is a $W^*$-algebra and $a\in\M^\nor$, 
then the inclusion $i:\Oc_a\to \M$ is smooth, and its differential $i_{*}\colon T\Oc_a\to \M$ is continuous and injective at every point of $\Oc_a$, but not necessarily has closed range. 
We then say that the inclusion map is a \textit{weak immersion} as e.g., in \cite{BGJP19}. 
If the range of $i_{*}$ at every point of $\Oc_a$ is closed, we say that $i$ is an \textit{immersion}. 
(By Definition \ref{defsubma}, this is a necessary but not sufficient condition for the inclusion to be an embedding).
\end{remark}

\begin{definition}\label{vcero}
	Let $\M$ be a $W^*$-algebra and $a\in\M^\nor$ with its corresponding action map $\alpha_a:\Vc_a\to \Oc_a$, $\alpha_a(v)=vav^*$. 
	For fixed $v_0\in \Vc_a$, we denote 
$$\delta_0=(\alpha_a)_{*v_0}:T_{v_0}\Vc_a\to \M,\quad \delta_0(x)=xa v_0^*+v_0ax^*.$$ 
Let $a_0:=v_0av_0^*\in \Oc_a\subseteq\M^\nor$, $p_0:=v_0v_0^*$, 
and $E_0 \colon \M\to\M_{a_0}:=\{x\in\M\mid xa_0=a_0x\}$ be a conditional expectation given by Lemma~\ref{BR05_Th2.2}. 
(It is noteworthy that the conditional expectation $E_0$ may be neither  weakly continuous nor unique.)
\end{definition}

We now show that for operators of finite spectrum, Example \ref{grass} is generic in a certain sense:

\begin{theorem}\label{immersed}
Let $a\in \M^{\nor}$. 
If $\spec(a)$ is a finite set, then $\Oc_a$ is the disjoint union of the unitary orbits of $a_i=\nu_i a\nu_i^*$ for certain $\nu_i\in \Vc_a$; these unitary orbits $\Oc(a_i)$ are the connected components of $\Oc_a$, and they are open and closed in $\Oc_a$. 
In particular $\Oc_a\subseteq \M$ is a closed  embedded split submanifold.
\end{theorem}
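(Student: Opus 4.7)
My plan is to decompose $\Oc_a$ as a disjoint union of unitary orbits indexed by Murray--von Neumann classes of kernel projections, show each orbit is open-and-closed, and invoke Theorem~\ref{finisemb} on each piece. Start with the spectral decomposition $a=\sum_{k=1}^n\lambda_kp_k$ on $\spec(a)\setminus\{0\}$, so $p_a=\sum_{k=1}^np_k$. Every $b=vav^*\in\Oc_a$ with $v\in\Vc_a$ has $vp_kv^*\sim p_k$ as spectral projection at $\lambda_k$ and kernel projection $\1-vv^*$. A standard partial-isometry sum argument shows that two elements of $\Oc_a$ are conjugate by a unitary in $U(\M)$ exactly when their kernel projections are Murray--von Neumann equivalent, so $\Oc_a$ splits into unitary orbits $\Oc(a_i):=\{ua_iu^*:u\in U(\M)\}$ for suitable representatives $a_i=\nu_ia\nu_i^*$ with $\nu_i\in\Vc_a$.

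The central step is the openness of each $\Oc(a_i)$ in $\Oc_a$, both in the manifold topology $\tau_q$ of Definition~\ref{sometops} and in the subspace topology $\tau_{\M}$ inherited from $\M$. For $\tau_q$ I use the elementary identity $ww^*-p_a=(w-p_a)w^*+p_a(w^*-p_a)$, which yields
\[
\|ww^*-p_a\|\le\|(w-p_a)w^*\|+\|p_a(w^*-p_a)\|\le 2\|w-p_a\|
\]
for every $w\in\Vc_a$. Hence a basic neighbourhood $W_\varepsilon=\{waw^*:w\in\Vc_a,\|w-p_a\|<\varepsilon\}$ with $\varepsilon<1/2$ consists of operators whose kernel projections $\1-ww^*$ are unitarily equivalent to $\1-p_a$ in $\M$, using the classical fact that projections at norm distance $<1$ in a unital $C^*$-algebra are unitarily equivalent. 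This forces $W_{1/2}\subseteq\Oc(a)$, and the groupoid action transports openness to every $a_i$. For $\tau_{\M}$ I apply norm continuity of the Riesz projections at each isolated eigenvalue $\lambda_k$ together with the continuous functional calculus of a cutoff function separating $0$ from $\{\lambda_1,\dots,\lambda_n\}$; for $b\in\Oc_a$ close to $a_i$ this yields Murray--von Neumann equivalence of all matching spectral projections of $b$ and $a_i$, hence $b\in\Oc(a_i)$. Combining both openness statements with the path-connectedness of $U(\M)$ (every unitary is $\ee^{\ie h}$ for some $h\in\M^\sa$ via Borel functional calculus) shows that the $\Oc(a_i)$'s are precisely the connected components of $\Oc_a$.

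For the submanifold conclusion I apply Theorem~\ref{finisemb} with $\Ac:=\M$: since each $a_i\in\M^\nor$ has finite spectrum, $\Oc(a_i)=\Oc_\M(a_i)$ is a closed embedded split submanifold of $\M$. To see that $\Oc_a$ is itself closed in $\M$, let $b_n=v_nav_n^*\in\Oc_a$ converge in norm to $b$: upper semicontinuity of the spectrum gives $\spec(b)\subseteq\spec(a)$, Riesz projection continuity yields $E^b(\{\lambda_k\})\sim p_k$ in $\M$ for each $k\ge 1$, and gluing partial isometries $w_k$ with $w_k^*w_k=p_k$ and $w_kw_k^*=E^b(\{\lambda_k\})$ into $v:=\sum_{k=1}^nw_k\in\Vc_a$ gives $vav^*=b$. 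Since $\tau_q$ and $\tau_{\M}$ share the same open-and-closed decomposition of $\Oc_a$ into the $\Oc(a_i)$'s, and Theorem~\ref{finisemb} makes these topologies agree on each piece, they coincide globally; thus $\Oc_a\subseteq\M$ inherits the structure of a closed embedded split submanifold.

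I expect the main obstacle to be the openness of $\Oc(a_i)$ in $\tau_q$: extracting the algebraic consequence $\1-ww^*\sim\1-p_a$ from the norm-smallness of $w-p_a$ is exactly what identifies the unitary orbits with the connected components and what reconciles the manifold topology with the subspace topology on $\Oc_a$.
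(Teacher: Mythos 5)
Your proof is correct, and it takes a genuinely different route from the paper's. You index the unitary orbits inside $\Oc_a$ explicitly by the Murray--von Neumann class of the kernel projection $\1-p_b$, having checked (by gluing partial isometries) that this is exactly the invariant separating $\U(\M)$-orbits within $\Oc_a$, and then you prove openness of each $\Oc(a_i)$ twice: for $\tau_q$ the estimate $\|ww^*-p_a\|\le 2\|w-p_a\|$ together with unitary conjugacy of projections at norm distance $<1$ already suffices, while for $\tau_\M$ you use norm continuity of the functional calculus with a cutoff function separating $0$ from $\spec(a)\setminus\{0\}$. The paper goes directly for $\tau_\M$-openness (the stronger statement) via Lagrange interpolating polynomials at the eigenvalues and the Lipschitz estimate of \cite{dragomir}, which yields an explicit, uniform $\varepsilon$ depending only on $\spec(a)$; your cutoff argument is softer but gives the same uniformity, since the relevant modulus of continuity depends only on the gaps of $\spec(a)\cup\{0\}$. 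You also re-derive the closedness of $\Oc_a$ instead of citing Corollary~\ref{C4}; this is a bonus, because your version works verbatim in any $W^*$-algebra whereas Corollary~\ref{C4} is formulated for $\M=\Bc(\Hc)$. The one step that deserves spelling out is the concluding ``$\tau_q=\tau_\M$ globally'': it holds because $\alpha_a^{-1}(\Oc(a_i))=\U(\M)v_i$, so $\tau_q\vert_{\Oc(a_i)}$ is squeezed between $\tau_\M\vert_{\Oc(a_i)}$ and the $\U(\M)$-homogeneous-space topology on $\Oc(a_i)$, and Theorem~\ref{finisemb} (the ``embedded'' conclusion) forces the outer two to coincide, collapsing the chain on each piece.
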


\begin{proof}
If $\spec(a)$ is finite, the groupoid orbit $\Oc_a$ is closed by Corollary~\ref{C4}, and so are the unitary orbits of any $a_0\in \Oc_a$ (Theorem \ref{finisemb}); these unitary orbits $\Oc(a_0)$  are clearly disjoint and contained in $\Oc_a$. 

We claim that there exists $\varepsilon>0$ depending only on $\spec(a)$ such that for any $a_0\in \Oc_a$
$$
\Oc_a\cap \{x:\|x-a_0\|<\varepsilon\}=\Oc(a_0)\cap \{x:\|x-a_0\|<\varepsilon\}.
$$
This shows that the unitary orbits are open in $\Oc_a$, and since they are embedded split submanifolds of $\M$ (Theorem \ref{finisemb}), the assertion on the differentiable structure of~$\Oc_a$ follows. 
To prove the claim, write $a=\sum\limits_{j=1}^n \lambda_j q_j$,  let $a_0:=v_0av_0^*\in\Oc_a$ for fixed $v_0\in \Vc_a$, and let let $b=v av^*\in \Oc_a$ with $v\in\Vc_a$ and $\|b-v_0 av_0^*\|<\delta$. Then 
$$
a_0=\sum_{j=1}^n \lambda_j v_0 q_j v_0^*=\sum_{j=1}^n \lambda_j P_j,\qquad b=\sum_{k=1}^n \lambda_j v q_jv^*=\sum_{j=1}^n \lambda_j Q_j, 
$$
where $P_j:=v_0 q_j v_0^*$ and $Q_j:=v q_jv^*$ for $j=1,\dots,n$. For each $j=1,\dots,n$, let $f_j:\mathbb C\to\mathbb R$
be the polynomial of degree $n-1$ 
with $f_j(\lambda_i)=0$ if $i\ne j$, and $f_j(\lambda_j)=1$, say $f_j(z)=\sum\limits_{k=1}^{n-1}\alpha_k z^k$. 
If $g_j(z):=\sum\limits_{k=2}^{n-1}k|\alpha_k|z^{k-1}$, then by \cite[Corollary 4]{dragomir} we have
$$
\|Q_j-P_j\|=\|f_j(b)-f_j(a_0)\|\le g_j(\|a\|)\|b-a_0\|<g_j(\|a\|)\delta.$$
Therefore if $\delta>0$ is sufficiently small, 
we have $\|v q_j v^*-v_0q_j v_0^*\|<\varepsilon$, and we can achieve 
$$
\|vv^*-v_0v_0^*\|=\|v p_av^*-v_0p_av_0^*\|=\|\sum_{j=1}^n v q_jv^*-v_0q_jv_0^*\|<1.$$
Hence, there exists $u\in \U(\M)$  such that $v=u v_0^*$ (in fact, $u$ depends smoothly on $v$, see  \cite[Section 2]{anco04} and the references therein). Thus we see that $b=ua_0u^*\in\Oc(a_0)$, and we are done.
\end{proof}

\begin{lemma}\label{time}
Let $a\in \Bc(\Hc)^{\nor}$. 
If $\delta_0, E_0$ are as in Definition \ref{vcero} for $\M=\Bc(\Hc)$, then 
the following assertions hold true. 
\begin{enumerate}[{\rm(1)}]
\item
\label{time_item1}
$T_{v_0}\Vc_a
=\{x\in\Bc(\Hc)\mid x^*v_0+v_0^*x=0
\}=\{zv_0\mid z^*=-z\in \Bc(p_0\Hc)\}$.
\item
\label{time_item2}
$\lie(K_{a_0})=\Ker\delta_0=v_0 \lie(K_a)v_0^*=\{zv_0\mid z^*=-z\in \Bc(p_0\Hc)\cap \Ran E_0\}$.
\item
\label{time_item3}
$T_{v_0}\Vc_a= 
\lie(K_{a_0})\oplus S_0$, 
where $S_0:=\{ wv_0\mid w^*=-w\in \Bc(p_0\Hc)\cap \Ker E_0\}$.
\item
\label{time_item4}
$\Ran\delta_0=\Ran(\delta_0|_{S_0})=\{[w,a_0] \mid w^*=-w \in \Bc(p_0\Hc)\cap \Ker E_0\}$.
\item
\label{time_item5}
$\overline{\Ran\delta_0}=\Bc(p_0\Hc)\cap \Ker E_0$.
\item
\label{time_item6}
$\Ran\delta_0$ is closed in $\Bc(\Hc)$ if and only if $\spec(a)$ is finite.
\end{enumerate}
\end{lemma}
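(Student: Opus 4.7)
The plan is to reduce Lemma~\ref{time} to a sequence of concrete tangent-space calculations followed by a single spectral obstruction.

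For part~(1), I would view $\Vc_a$ as the preimage of $p_a$ under the submersion $v\mapsto v^*v$ (as in \cite[Prop.~3.4]{AnCoMb05}); linearizing the defining equation yields the tangent space as the set of $x$ satisfying $x^*v_0+v_0^*x=0$, together with the natural constraint $x=xp_a$ inherited from $v=vp_a$ on $\Vc_a$. For the second parametrization, I would exploit the transitive left action of $\U(\Hc)$ on $\Vc_a$, whose orbit map $u\mapsto uv_0$ has derivative $z\mapsto zv_0$ surjecting onto $T_{v_0}\Vc_a$; restricting to skew $z$ in the natural complement of the kernel $\{z:zp_0=0\}$ produces the canonical form $zv_0$ with $z$ skew in $\Bc(p_0\Hc)$. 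Part~(2) then follows by substituting $x=zv_0$ into $\delta_0(x)=0$ and using $v_0v_0^*=p_0$, $v_0av_0^*=a_0$, $p_0a_0=a_0p_0=a_0$, together with injectivity of $a_0$ on $p_0\Hc=\overline{\Ran\,a_0}$ (by normality): the equation reduces to $[z,a_0]=0$, i.e.\ $z\in\{a_0\}'\cap\Bc(p_0\Hc)=\Ran E_0\cap\Bc(p_0\Hc)$. The identification $\lie(K_{a_0})=v_0\,\lie(K_a)\,v_0^*$ is obtained via the Banach--Lie group isomorphism $w\mapsto v_0wv_0^*$ between $K_a$ and $K_{a_0}$. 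For part~(3) one applies the idempotent conditional expectation $E_0$ from Lemma~\ref{BR05_Th2.2} to the skew-adjoint part of $\Bc(p_0\Hc)$.

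Parts~(4) and~(5) are mostly computational. For~(4), the identity
\[
\delta_0(zv_0)=zv_0 a v_0^*+v_0 a v_0^*\,z^* = za_0 + a_0 z^* = [z,a_0]
\]
(using $z^*=-z$) gives the formula, and the equality $\Ran\delta_0=\Ran(\delta_0|_{S_0})$ is immediate from the direct sum decomposition in~(3). For~(5), the inclusion $\Ran\delta_0\subseteq\Ker E_0\cap\Bc(p_0\Hc)$ is automatic since conditional expectations annihilate commutators with their fixed elements; for the reverse inclusion in closure, the plan is to use the spectral decomposition of $a_0$ to approximate an arbitrary $y\in\Bc(p_0\Hc)\cap\Ker E_0$ by finite ``off-diagonal'' spectral truncations, each of which can be written as a commutator $[\tilde z,a_0]$ with $\tilde z$ obtained by dividing block entries by $\lambda_i-\lambda_j$.

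The main obstacle is~(6). The ``if'' direction is short: finite $\spec(a)$ gives a finite orthogonal decomposition $p_0=q_1+\cdots+q_n$ into spectral projections of $a_0$, with respect to which $\delta_0$ acts block-diagonally and has range equal to a finite topological direct sum of off-diagonal blocks, hence closed. The ``only if'' direction is where the real work is; the plan is to adapt the Weyl--von Neumann strategy of Theorems~\ref{noncompact} and~\ref{au_th} to the present setting on the $W^*$-algebra $\Bc(p_0\Hc)$, which is \emph{not} a symmetrically normed compact ideal, so those theorems do not apply verbatim. First, one decomposes $p_0\Hc=\Hc_c\oplus\Hc_p$ into the continuous-spectrum part and the closed span of eigenvectors of $a_0$, and argues that closed range passes to each summand by a block-matrix argument. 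Fialkow's result \cite[Th.~3.2]{Fi79} together with normality of $a_0$ forces $\Hc_c=\{0\}$, so $a_0$ has a dense set of eigenvalues. Next, one approximates $a_0$ in operator norm by a diagonalizable normal $a_0'$ with $\spec(a_0')=\spec(a_0)$ via \cite[Prop.~2.1]{Vo76}, and verifies that the closed-range property is stable under this approximation, in the spirit of Lemma~\ref{au} but carried out directly in the operator norm rather than in an ideal norm; finally, Lemma~\ref{splits}\eqref{splits_item2} applied to $a_0'$ forces finite spectrum, a contradiction. The delicate technical core is precisely this norm-stability step outside the ideal framework where Lemma~\ref{au} was proved.
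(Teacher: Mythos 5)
Your handling of (2)--(4) and of the ``if'' direction of (6) follows the paper's own (essentially computational) route, but two steps of the proposal would fail as written. First, in (1) the reduction to $z$ skew-adjoint in $\Bc(p_0\Hc)$ is not justified: the kernel of $z\mapsto zv_0$ on skew-adjoint operators is $\{z\mid zp_0=0\}=\ie\Bc((\1-p_0)\Hc)^\sa$, and its natural complement inside $\ie\Bc(\Hc)^\sa$ contains, besides $\ie\Bc(p_0\Hc)^\sa$, all corner elements $b-b^*$ with $b=(\1-p_0)bp_0$; the corresponding vectors $bv_0$ are honest tangent vectors to $\Vc_a$ at $v_0$ (the curves $t\mapsto\ee^{t(b-b^*)}v_0$ lie in $\Vc_a$ and tilt the final space $p_0\Hc$) and are not of the form $zv_0$ with $z$ skew in $\Bc(p_0\Hc)$: already for $a=p_a$ a rank-one projection and $v_0=p_a$, the set $\Vc_a$ is the unit sphere of $\Hc$, while $\{zv_0\mid z^*=-z\in\Bc(p_0\Hc)\}$ is one-dimensional over~$\RR$. (Also, $\U(\Hc)$ does not act transitively on $\Vc_a$; the co-rank of the final projection is an invariant.) These corner directions contribute $\delta_0(bv_0)=ba_0+a_0b^*$, so your derivations of (3) and of $\Ran\delta_0=\Ran(\delta_0\vert_{S_0})$ in (4) must explain how they are disposed of. Second, and more seriously, the approximation argument proposed for (5) cannot work: block-diagonal spectral truncations of a bounded $y$ converge only strongly, not in norm, and when $\spec(a_0)$ is infinite one can pick distinct eigenvalues with $\vert\lambda_{i_k}-\lambda_{j_k}\vert\to0$ so that the partial isometry $y=\sum_k e_{i_k}\otimes\overline{e_{j_k}}$ satisfies $E_0(y)=0$ while $\Vert y-[w,a_0]\Vert\ge 1/2$ for every bounded $w$ (compare matrix entries at the positions $(i_k,j_k)$); hence no truncation argument yields the asserted density. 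Consistently with this, the paper's own proof establishes \eqref{time_item5} only in the finite-spectrum (closed-range) case, as a by-product of~(6).

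Concerning the ``only if'' direction of (6), your diagnosis that Theorems \ref{noncompact} and \ref{au_th} are unavailable and that a new operator-norm stability lemma is the delicate core is exactly where you diverge from the paper, and the extra work is not needed. The pair $(\Bc(p_0\Hc),\Sg_1(p_0\Hc))$ is a dual pair in the sense of \eqref{duality} (the trace pairing gives $\Bc(p_0\Hc)\simeq\Sg_1(p_0\Hc)^*$ isometrically), and with $\Jc=\Ic=\Bc(p_0\Hc)$ the proofs of Lemma~\ref{au} and of Lemma~\ref{splits}\eqref{splits_item2} go through verbatim: Lemma~\ref{au} uses only the weak-$*$ compactness (and, via separability of $\Sg_1$, metrizability) of bounded subsets of $\Ic\simeq\Ic_1^*$, the inclusion $\Fc(\Hc)\subseteq\Ic_1$, and the bound $\Vert\delta_a(x)\Vert_\Ic\le2\Vert a\Vert\,\Vert x\Vert_\Ic$; no compactness of the ideal enters. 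This is precisely how the paper settles (6) when $p_0$ has infinite rank: it simply invokes Theorem~\ref{au_th} for $a_0\in\Bc(p_0\Hc)^\nor$. In particular your preliminary splitting $p_0\Hc=\Hc_c\oplus\Hc_p$ and the appeal to Fialkow are redundant, since \cite[Prop. 2.1]{Vo76} applies to an arbitrary normal operator. What does require genuine attention in (6) is the passage from closedness of $\Ran\delta_0$ to closedness of the range of $\delta_{a_0}$ on skew-adjoint operators, and that passage rests on the tangent-space description whose gap is pointed out above.
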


\begin{proof} 
The first and second assertions can be proved with elementary algebraic manipulations using the definitions, and since $\id=(\id-E_0)+E_0$ so is the third one. 
In fact, these facts reflect a natural principal connection  carried by the principal $K_a$-bundle $\alpha_a\colon\Vc_a\to\Oc_a$. 
See \cite[Eq. (4.33)]{BO19} and the end of \cite[Subsect. 2.5]{BO19}. 

The fourth assertion follows from the previous one. 
In fact, if $x=wv_0\in S_0$ with $w^*=-w\in \Bc(p_0\Hc)\cap \Ker E_0$, 
then 
$$\delta_0(x)=xa v_0^*+v_0ax^*=wv_0a v_0^*+v_0av_0^*w^*=wa_0-a_0w.$$
From there everything happens inside $\Bc(p_0\Hc)$, and it is apparent apparent that $\Ran\delta_0 = \{[w,a_0]:w^*=-w \in \Bc(p_0\Hc)\cap \Ker E_0\}$.

If the spectrum of $a$ is finite, so is the spectrum of $a_0$ and with the same proof as in Lemma \ref{finite} (but in $\Hc_0=p_0\Hc$ instead of $\Hc$), we obtain that the range of $\delta_0$ is closed and in fact equal to $\{[a_0,z]: \, z^*=-z\in \Bc(p_0\Hc)\cap \Ker E_0\}$, thus we also have the proof of \eqref{time_item5} in this case. 
Now  assume that the range of $\delta_0$ is closed; first consider the case of $p_0$ of finite dimension, thus $a$ has finite range, and it must have finite spectrum.
 If the dimension of $p_0$ is infinite, so is the dimension of $\Hc_0=p_0\Hc$, and we are in the situation of  Theorem~\ref{au_th}, thus the spectrum of $a_0$ is finite, therefore the spectrum of $a$ is finite (and we can also we conclude that \eqref{time_item5} holds).
 \end{proof}

\begin{corollary}
	If $\Oc_a\hookrightarrow \Bc(\Hc)$ is immersed, then $\spec(a)$ is finite and $\Oc_a$ is in fact and embedded closed split submanifold.
\end{corollary}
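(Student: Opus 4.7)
The plan is to chain three already-established facts: the principal-bundle realization of $\Oc_a$ from Proposition~\ref{smooth}, Lemma~\ref{time}\eqref{time_item6}, and Theorem~\ref{immersed}. I would begin by choosing the convenient base-point $v_0:=p_a\in\Vc_a$ (note $p_a^*p_a=p_a$), for which normality of $a$ gives $p_aap_a=a$, so that $a_0:=v_0av_0^*=a$ itself. By Proposition~\ref{smooth} the action map $\alpha_a\colon\Vc_a\to\Oc_a$ is a surjective submersion, hence the tangent space $T_a\Oc_a$ at this distinguished point equals $\Ran\delta_0$ in the notation of Definition~\ref{vcero}, and the differential of the inclusion $\Oc_a\hookrightarrow\Bc(\Hc)$ at $a$ is just the inclusion $\Ran\delta_0\hookrightarrow\Bc(\Hc)$.

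Next, unpacking the immersion hypothesis at the point $a$ (in the sense of Definition~\ref{defsubma} together with the closing remark of Section~\ref{sec:groupoid}) amounts precisely to saying that $\Ran\delta_0$ is closed in $\Bc(\Hc)$. At this stage Lemma~\ref{time}\eqref{time_item6} takes over and forces $\spec(a)$ to be a finite set. Theorem~\ref{immersed} then delivers the full structural conclusion: $\Oc_a$ splits as the disjoint union of the unitary orbits of finitely many $a_i=\nu_i a\nu_i^*$, each of which is closed embedded and split by Theorem~\ref{finisemb}, and therefore $\Oc_a$ itself is a closed embedded split submanifold of $\Bc(\Hc)$.

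I expect no genuine obstacle in this corollary: all the technical content has been packaged into Lemma~\ref{time}\eqref{time_item6}, whose nontrivial direction rests on the Weyl--von Neumann-type Theorem~\ref{au_th} applied inside $\Bc(p_0\Hc)$. The only point worth verifying carefully is the identification $T_a\Oc_a=\Ran\delta_0$, but this is immediate from the submersion property of $\alpha_a$ at the base-point $v_0=p_a$, so the corollary is essentially a bookkeeping consequence of the earlier machinery.
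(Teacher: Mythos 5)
Your proposal is correct and follows essentially the same route as the paper's own proof: the immersion hypothesis gives closedness of $\Ran\delta_0$, Lemma~\ref{time}\eqref{time_item6} then forces $\spec(a)$ to be finite, and Theorem~\ref{immersed} yields that $\Oc_a$ is a closed embedded split submanifold. The extra bookkeeping you supply (taking $v_0=p_a$, so $a_0=a$, and identifying $T_a\Oc_a$ with $\Ran\delta_0$ via the submersion $\alpha_a$) is exactly what the paper leaves implicit.
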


\begin{proof}
If  the groupoid orbit $\Oc_a\hookrightarrow \Bc(\Hc)$ is immersed, then $\Ran\delta_0$ is closed in $\Bc(\Hc)$, hence Lemma~\ref{time}\eqref{time_item6} implies that  $\spec(a)$ is finite. 
Therefore, by Theorem~\ref{immersed}, $\Oc_a$ is and embedded closed split submanifold of~$\Bc(\Hc)$. 
\end{proof}

\subsection{Comparison of topologies}

As mentioned in Definition \ref{sometops}, the manifold topology  $\tau_q$ of $\Oc_a$ is usually finer (has more open sets) than the norm topology $\tau_{\M}$. In what follows we study the relation among the two topologies in more detail, for the case of $\M=\Bc(\Hc)$.


It is well-known that the if $u_n$ is the unitary shifting 
$e_1\mapsto e_2\mapsto \cdots \mapsto e_n\mapsto e_1$ for any orthonormal basis of $\Hc$, then $u_n$ converges strongly to the shift operator $e_k\mapsto e_{k+1}$. 
(Compare the proof of Theorem~\ref{isclosed}.)
With this example in mind, we can prove the following:

\begin{lemma}\label{finer}
Let $a\in \Bc(\Hc)^{\nor}$ such that $\spec(a)$ is infinite and accumulates only at $\lambda=0$. 
Then $\tau_q$ is strictly finer than $\tau_{\M}$ in $\Oc_a$.
\end{lemma}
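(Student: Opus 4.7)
The plan is to exhibit a sequence $a_n=u_nau_n^*\in\Oc_a$ and a point $b\in\Oc_a$ with $a_n\to b$ in the operator norm but $a_n\not\to b$ in $\tau_q$. Since the identity $i\colon(\Oc_a,\tau_q)\to(\Oc_a,\tau_{\M})$ is continuous by Definition~\ref{sometops}, we already have $\tau_{\M}\subseteq\tau_q$, so this will establish the strict containment.

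First I would exploit the spectral structure: since $\spec(a)$ is infinite and accumulates only at $0$, its nonzero elements form a sequence $\lambda_1,\lambda_2,\dots$ with $\lambda_n\to0$, and each $\lambda_n$ is an isolated eigenvalue of the normal operator $a$. Let $p_n$ be the spectral projection of $a$ at $\lambda_n$ and pick unit vectors $e_n\in p_n\Hc$; for the sake of this sketch assume each $p_n$ has rank one, so $p_a=\sum_n e_n\otimes\overline{e_n}$. Define the shift partial isometry $v\in\Vc_a$ by $ve_n:=e_{n+1}$ and $v:=0$ on $p_a^{\perp}\Hc$, and set $b:=vav^*=\sum_n\lambda_n\,e_{n+1}\otimes\overline{e_{n+1}}\in\Oc_a$. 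For each $n\ge1$, let $u_n\in\U(\Hc)$ be the finite cyclic unitary sending $e_k\mapsto e_{k+1}$ for $1\le k\le n$ and $e_{n+1}\mapsto e_1$, equal to the identity on $\mathrm{span}\{e_1,\dots,e_{n+1}\}^{\perp}$, and set $w_n:=u_np_a\in\Vc_a$ so that $\alpha_a(w_n)=u_nau_n^*=:a_n$. A direct diagonal computation yields
\[
\|a_n-b\|=\max\!\Bigl(|\lambda_{n+1}|,\;\sup_{k\ge n+2}|\lambda_k-\lambda_{k-1}|\Bigr)\xrightarrow[n\to\infty]{}0,
\]
so $a_n\to b$ in the operator norm.

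The crux is to show $a_n\not\to b$ in $\tau_q$. By Proposition~\ref{smooth}, $\alpha_a\colon\Vc_a\to\Oc_a$ is a principal $K_a$-bundle, so $\alpha_a^{-1}(a_n)=w_nK_a=\{u_nk:k\in K_a\}$; by openness of $\alpha_a$ and local triviality, $a_n\to b$ in $\tau_q$ would require $k_n\in K_a$ with $u_nk_n\to v$ in norm. Under the one-dimensional eigenspace assumption, every $k\in K_a$ is diagonal in the basis $\{e_n\}$ with unit-modulus entries $\gamma_n(k)\in\TT$; evaluating at $e_{n+1}$,
\[
(u_nk-v)e_{n+1}=\gamma_{n+1}(k)\,e_1-e_{n+2},
\]
which has norm $\sqrt{|\gamma_{n+1}(k)|^2+1}=\sqrt{2}$ since $e_1\perp e_{n+2}$. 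Hence $\|u_nk-v\|\ge\sqrt{2}$ for every $k\in K_a$, obstructing any such lift.

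The main obstacle is adapting this argument to general eigenspace dimensions. There one lets $q:=\sum_n e_n\otimes\overline{e_n}\le p_a$, defines $v$ to act as the shift on $q\Hc$ and as the identity on $(p_a-q)\Hc$, and notes that every $k\in K_a$ still preserves each eigenspace $p_n\Hc$; thus $ke_{n+1}\in p_{n+1}\Hc\perp p_1\Hc\ni u_n^*e_1$, and the same orthogonality computation yields $\|u_nk-v\|\ge\sqrt{2}$, completing the proof.
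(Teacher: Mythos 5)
Your proposal is correct, and the overall construction is the same as the paper's (the orbit point coming from the shift partial isometry $v$, the cyclic finite unitaries $u_n$, and the observation that the resulting sequence converges in operator norm). The genuine difference lies in the crucial step of ruling out $\tau_q$-convergence, where you and the paper take distinct routes.

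Your argument: since $\alpha_a$ is an open map, $\tau_q$-convergence $a_n\to b=\alpha_a(v)$ is equivalent to $\inf\{\Vert z-v\Vert : z\in\alpha_a^{-1}(a_n)\}\to 0$, and the fibre over $a_n=u_nau_n^*$ is precisely $u_nK_a$. You then show $\Vert u_nk-v\Vert\ge\sqrt2$ for every $k\in K_a$ by evaluating the operator at the single vector $e_{n+1}$: since every $k\in K_a$ commutes with $a$ (hence preserves each eigenspace) and is isometric on $p_a\Hc$, the three components $\langle ke_{n+1},e_{n+1}\rangle e_1$, the tail of $ke_{n+1}$ inside the $(n+1)$-th eigenspace, and $-e_{n+2}$ sit in mutually orthogonal eigenspaces, so Pythagoras gives norm exactly $\sqrt 2$. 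This is elementary and self-contained.

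The paper instead works with the neighborhood $W=\{waw^*: w\in\Vc_a,\ \Vert w-p_a\Vert<1\}$ of $a$ (the dual picture, conjugating your $a_n$ by $u_n^*$) and shows $b_n\notin W$: using the continuous functional calculus (via polynomial approximation with no constant term), any $w\in\Vc_a$ with $waw^*=b_n$ must satisfy $ww^*=p_a-p_n$, which is incompatible with $\Vert w-p_a\Vert<1$ and $w^*w=p_a$ (that would force $w$ to be invertible in $p_a\Bc(\Hc)p_a$, hence $ww^*=p_a$). Both routes are valid; yours replaces the functional-calculus computation of the final projection by a pointwise orthogonality estimate, which is shorter and perhaps more transparent, while the paper's version makes visible the structural fact that the final projections $ww^*$ of all lifts of $b_n$ are pinned down. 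One small stylistic note: invoking ``local triviality'' of the principal bundle is more than you need; the openness of $\alpha_a$ together with the explicit neighborhood basis from Definition~\ref{sometops} already reduces $\tau_q$-convergence to the existence of the lifts $u_nk_n\to v$.
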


\begin{proof} 
Since the non-zero spectral values of $a$ are isolated hence eigenvalues, we can write $a=\sum\limits_{k=1}^{\infty} \lambda_k Q_k$ with $Q_k$ disjoint orthogonal projections such that $\sum\limits_{k=1}^{\infty}Q_k=p_a$, and $\lambda_k\in\mathbb C$ the eigenvalues of $a$. 
We further assume that $|\lambda_k|\searrow 0$. 
For each $k\ge 1$, pick a unit vector $\xi_k\in \Ran Q_k$ and let $p_k=\xi_k\otimes \overline{\xi_k}$ be the orthogonal projection onto the 1-dimensional subspace~$\CC\xi_k$. 
Let $u$ be the shift of these vectors, i.e., 
$$
u=\sum_{k=1}^{\infty} \xi_{k+1}\otimes \overline{\xi_k}+ Q_k-p_k\in \Bc(\Hc).
$$
Then we compute $u^*u$ as follows:
\begin{align*}
u^*u
=\big(\sum_{k=1}^{\infty} \xi_k\otimes \overline{\xi_{k+1}}+ Q_k-p_k \big)
\big(\sum_{j=1}^{\infty} \xi_{j+1}\otimes \overline{\xi_j}+Q_j-p_j\big)
& = \sum_{k=1}^{\infty} p_k +Q_k-p_k \\
&=\sum_{k=1}^{\infty}Q_k =p_a,
\end{align*}
since $(Q_k-p_k)(\xi_j\otimes \overline{\xi_{j+1}})=0$ for each $j,k\ge 1$. Therefore $u\in \Vc_a$. 
On the  other hand, 
\begin{align*}
ua =
&\big(\sum_{k=1}^{\infty} \xi_{k+1}\otimes \overline{\xi_k}+ Q_k-p_k \big)
\big(\sum_{j=1}^{\infty} \lambda_j Q_j\big) 
=\sum_{k=1}^{\infty} \lambda_k\big(   \xi_{k+1}\otimes \overline{\xi_k} + Q_k-p_k \big),
\end{align*}
therefore
\begin{equation}
\label{finer_proof_eq1}
uau^*=\sum_{k=1}^{\infty} \lambda_k (Q_k-p_k)+\sum_{k=1}^{\infty} \lambda_k p_{k+1}.
\end{equation}
By taking $\lambda_k\equiv 1$, it is also clear that
\begin{equation}
\label{finer_proof_eq3}
uu^*=up_au^*=\sum_{k=1}^{\infty}  Q_k-p_k+ p_{k+1}=p_a-p_1.
\end{equation}
Now for each $n\ge 2$, let 
$$
u_n:=\sum_{k=1}^{n-1} \xi_{k+1}\otimes \overline{\xi_k}
+ \xi_1\otimes \overline{\xi_n} +\sum_{k=1}^n (Q_k-p_k)+\sum_{k\ge n+1}Q_k 
+(\1-p_a).
$$
Then $u_n\in\Bc(\Hc)$ is the unitary operator sending $\xi_1\mapsto \xi_2\ \mapsto \cdots \mapsto \xi_n\mapsto \xi_1$ and fixing the orthogonal complement of the range of $\sum\limits_{k=1}^n p_k$. 
It is then clear that for $k\ge n+1$, we have $u_n p_k=p_k$, that $u_np_nu_n^*=p_1$,  and $u_n p_ju_n^*=p_{j+1}$ for $1\le j \le n-1$. 
Thus  $u_n^*p_k u_n=p_k$ for $k\ge n+1$, and moreover
\begin{equation}\label{upun}
u_n^*p_au_n=p_a, \quad  u_n^* p_1 u_n=p_n,  \quad \textrm{ and } \quad u_n^*p_{j+1}u_n=p_j \textrm{ for  }1\le j\le n-1.
\end{equation}
We now compute
\begin{align*}
u_n a= &  
\big(\sum_{k=1}^{n-1} \xi_{k+1}\otimes \overline{\xi_k}
+ \xi_1\otimes \overline{\xi_n} 
+\sum_{k=1}^n (Q_k-p_k)
+\sum_{k\ge n+1}Q_k+(\1-p_a)\big)
\big(\sum_{j\ge 1}\lambda_j Q_j\big)\\
= & \sum_{k=1}^{n-1}\big( \lambda_k \, \xi_{k+1}\otimes \overline{\xi_k}\big)+  \lambda_n\, \xi_1\otimes \overline{\xi_n}+\sum_{k=1}^n \lambda_k (Q_k-p_k)
+\sum_{k\ge n+1}\lambda_k Q_k,
\end{align*}
and
\begin{equation}
\label{finer_proof_eq2}
u_nau_n^*=\lambda_n p_1+ \sum_{k=1}^{n-1} \lambda_k p_{k+1}+\sum_{k=1}^n \lambda_k(Q_k-p_k)+\sum_{k\ge n+1}\lambda_k Q_k.
\end{equation}
Let $b_n=u_n^*uau^*u_n \in \Oc_{uau^*}=\Oc_a$, since $u\in \Vc_a$. 
We claim that $\|b_n-a\|\to 0$ but that $b_n$ does not converge to $a$ in $\tau_q$. With this, it is apparent that the identity map $i:(\Oc,\tau_{\M})\to (\Oc_a,\tau_q)$ is not continuous, therefore the conclusion of the lemma follows. 

To prove the above claim, we first compute, using \eqref{finer_proof_eq1} and \eqref{finer_proof_eq2}, 
\begin{align*}
\|b_n-a\| & = \|uau^*-u_nau_n^*\| =\|\lambda_n p_1+ \sum_{k=n}^{+\infty}(\lambda_k-\lambda_{k+1})p_{k+1}+\lambda_{n+1}Q_{n+1}\|\\
&= \sup_{k\ge n+1} \{ |\lambda_n|, |\lambda_{k+1}-\lambda_k|,\vert\lambda_{n+1}\vert\}\le 2|\lambda_n|\to 0.
\end{align*}
Now let $W=\{w aw^*: w^*w=p_a,\, \|w-p_a\|<1\}$ be as in Definition \ref{sometops}. 
We claim that $b_n\not\in W$ for any $n\ge 1$, and this will finish the proof. 
If that is not the case, suppose that there exists $w\in \Vc_a$ such that $\|w-p_a\|<1$ and that 
$$
u_n^*uau^*u_n=b_n=waw^*.
$$
Then, for each $k\ge 1$,
\begin{align*}
wa^k w^*=(waw^*)^k=u_n^*(uau^*)^ku_n= u_n^* u a^k u^*u_n.
\end{align*}
Let $f$ be a continuous function such that $f\equiv 1$ in the first $j$ eigenvalues of $a$, and is zero in a neighborhood of $0$ and every other eigenvalue of $a$. Then $f$ can be uniformly approximated by polynomials with no constant term, therefore, denoting $P_j:=\sum\limits_{k=1}^jQ_k$, 
$$
w P_j w^*=w f(a)w^*=u_n^* u f(a)u^*u_n=u_n^*u P_j u^* u_n.
$$
The left-hand side converges in the strong operator topology to $w p_aw^*$, and the right hand side to $u_n^* u p_a u^* u_n=u_n^* uu^*u_n$. 
Therefore by \eqref{upun} and \eqref{finer_proof_eq3}, 
\begin{equation}\label{ww}
ww^*=wp_aw^*=u_n^*(p_a-p_1)u_n=p_a-p_n
\end{equation}
and $w^*=w^*ww^*=w^*(p_a-p_n)$. Hence $w=p_aw-p_nw$, thus $p_nw=p_nw-p_nw=0$. This proves that $w=p_aw$, and since $wp_a$ follows from $w^*w=p_a$, it is clear that $w\in \Bc(p_a\Hc)$. Now the condition $\|w-p_a\|<1$ implies that $w$ is invertible in $\Bc(p_a\Hc)=p_a\Bc(\Hc)p_a$, and since $w^*w=p_a$, it must be $ww^*=p_a$, a contradiction with (\ref{ww}).
\end{proof}

\begin{theorem}
\label{top_th}
Let $a\in \Bc(\Hc)^{\nor}$.  
If $\Oc_a$ is norm closed and $\tau_q=\tau_{\M}$, then  $\spec(a)$ is a finite set,  therefore $\Oc_a\subseteq \Bc(\Hc)$ is a closed  embedded split submanifold.
\end{theorem}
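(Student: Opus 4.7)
The plan is to combine three earlier results in the paper to reach the conclusion almost immediately, without any new technical argument. The key observation is that the two hypotheses ($\Oc_a$ norm closed and $\tau_q=\tau_{\M}$) leave only the finite-spectrum case available.

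First I would invoke Corollary~\ref{C4}: since $\Oc_a$ is closed in $\Bc(\Hc)$ with respect to the operator norm topology, the compact set $\spec(a)\subseteq\CC$ can accumulate at most at the point $0$. In particular, every nonzero point of $\spec(a)$ is isolated in $\spec(a)$.

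Next I would argue by contradiction: suppose $\spec(a)$ is infinite. Since $\spec(a)$ is a nonempty compact subset of $\CC$, an infinite such set must have at least one accumulation point, and by the previous step this accumulation point is necessarily $0$. Thus $\spec(a)$ is infinite and accumulates only at $\lambda=0$, putting $a$ exactly in the setting of Lemma~\ref{finer}. That lemma then asserts that $\tau_q$ is \emph{strictly} finer than $\tau_{\M}$ on $\Oc_a$, contradicting the hypothesis $\tau_q=\tau_{\M}$. Hence $\spec(a)$ must be finite.

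Once $\spec(a)$ is known to be finite, the final conclusion follows directly from Theorem~\ref{immersed}: the groupoid orbit $\Oc_a$ decomposes as the disjoint union of the unitary orbits $\Oc(a_i)$ of finitely many representatives $a_i=\nu_i a\nu_i^*$ (with $\nu_i\in\Vc_a$), these are the connected components of $\Oc_a$, and $\Oc_a\subseteq\Bc(\Hc)$ is a closed embedded split submanifold. Since the entire argument is a direct assembly of Corollary~\ref{C4}, Lemma~\ref{finer}, and Theorem~\ref{immersed}, there is no genuine obstacle; the only point requiring care is the elementary topological remark that an infinite compact subset of $\CC$ whose only possible accumulation point is $0$ must actually accumulate at $0$, so that Lemma~\ref{finer} is indeed applicable.
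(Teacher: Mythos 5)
Your proposal is correct and follows essentially the same route as the paper: Corollary~\ref{C4} to confine the accumulation points of $\spec(a)$ to $\{0\}$, Lemma~\ref{finer} to rule out the infinite-spectrum case by contradiction with $\tau_q=\tau_{\M}$, and Theorem~\ref{immersed} to conclude. Your explicit remark that an infinite compact subset of $\CC$ with $0$ as its only possible accumulation point must actually accumulate at $0$ is a small but genuine point of care that the paper leaves implicit when asserting that one is ``in the situation of Lemma~\ref{finer}.''
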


\begin{proof}
If the groupoid orbit is closed, by Corollary \ref{C4},  $\spec(a)$ accumulates at most at $\lambda=0$. 
If $\spec(a)$ is infinite, then we are in the situation of 
Lemma~\ref{finer}, 
therefore $\Oc_a$ is not embedded, a contradiction. 
Thus $\spec(a)$ must be a finite set. 
Everything else follows from  Theorem \ref{immersed}.
\end{proof}

Combining the previous results we can draw the following conclusion. 

\begin{corollary}
The closure of the unitary orbit $\Oc(a)$ of a compact normal operator $a\in \Bc(\Hc)$ with infinite spectrum (which is the groupoid orbit $\Oc_a$ of $a$) is a weakly immersed submanifold of $\Bc(\Hc)$ but never and immersed manifold, and its manifold topology is always finer than the operator norm topology.
\end{corollary}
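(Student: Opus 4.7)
The plan is to assemble the conclusion from pieces already established earlier in the paper; nothing essentially new needs to be done.

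First, the identification of the norm closure of the unitary orbit with the groupoid orbit: since $a$ is a compact normal operator, Lemma~\ref{grpd} (equivalently, Corollary~\ref{smcl}) gives $\overline{\Oc_{\Bc(\Hc)}(a)}^{\Vert\cdot\Vert}=\Oc_a$. Thus the object to study is $\Oc_a$, now viewed simultaneously with its manifold topology $\tau_q$ coming from $\Oc_a\simeq\Vc_a/K_a$ and with the relative norm topology $\tau_{\M}$ inherited from~$\Bc(\Hc)$.

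Next, the weak immersion statement is exactly Corollary~\ref{smcl0}\eqref{smcl0_item2}: the inclusion $\Oc_a\hookrightarrow\Bc(\Hc)$ is smooth and its tangent map at every point is injective, which is the definition of a weak immersion as recalled in the remark after Corollary~\ref{smcl}.

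For the claim that $\Oc_a$ is never an immersed manifold when $\spec(a)$ is infinite, I would use Lemma~\ref{time}\eqref{time_item6}: the range of the differential $\delta_0=(\alpha_a)_{*v_0}$ is closed in $\Bc(\Hc)$ if and only if $\spec(a)$ is finite. Since by hypothesis $\spec(a)$ is infinite, the tangent map of the inclusion $\Oc_a\hookrightarrow\Bc(\Hc)$ at $a_0=v_0av_0^*$ does not have closed range, so the inclusion fails to be an immersion in the sense of Definition~\ref{defsubma}.

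Finally, for the strict refinement of topologies I invoke Lemma~\ref{finer}. That lemma requires $\spec(a)$ to be infinite and to accumulate only at $\lambda=0$; but this is automatic, because any compact operator has spectrum whose only possible accumulation point is~$0$. Hence $\tau_q$ is strictly finer than $\tau_{\M}$ on $\Oc_a$, completing the proof. The only (very mild) thing to notice is that the three features — weak immersion, failure of immersion, and strict refinement of topologies — are independent consequences of results already proved, so no further obstacle arises; if anything, the subtle point is simply to remember that the compactness of $a$ is what forces the spectrum to accumulate only at $0$, so that Lemma~\ref{finer} applies.
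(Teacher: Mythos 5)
Your proof is correct and follows precisely the assembly the paper intends by the phrase ``combining the previous results.'' You identify the norm closure of the unitary orbit with the groupoid orbit via Lemma~\ref{grpd} (or Corollary~\ref{smcl}), read off the weak immersion property from Corollary~\ref{smcl0}\eqref{smcl0_item2}, derive the failure of immersion from Lemma~\ref{time}\eqref{time_item6} (equivalently the corollary immediately following it), and invoke Lemma~\ref{finer} for the strict refinement of topologies, correctly noting that compactness of $a$ automatically forces its infinite spectrum to accumulate only at~$0$, so the hypothesis of that lemma is satisfied.
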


We end this paper with the following problem:

\begin{problem}
Let $a=\sum\limits_{j\ge 1} \frac{1}{j}p_j + \sum\limits_{k\ge 1} (1+\frac{1}{k})q_k$ 
where $\{p_j\}_{j\ge 1}$ and $\{q_k\}_{k\ge 1}$ are sequences of mutually orthogonal projections in the $W^*$-algebra $\M=\Bc(\Hc)$. 
Assume further that the projections $P:=\sum\limits_{j\ge 1} p_j$ and $Q:=\sum\limits_{k\ge 1} q_k$ satisfy $P+Q=\1$. 
Then  the groupoid orbit $\Oc_a\subseteq \Bc(\Hc)$ is not an immersed manifold, and it is also not closed. 
Is it locally closed? Does $\tau_q=\tau_{\M}$ hold?
\end{problem}

\section*{Acknowledgment} 
The research of the first-named author (D.B.) was supported by a grant of the  Ministry of Research, Innovation and Digitization, CNCS/CCCDI -- UEFISCDI, project number PN-III-P4-ID-PCE-2020-0878, within PNCDI III. 
The research of the second-named author (G.L.) was supported by CONICET,  ANPCyT and Universidad de Buenos Aires (Argentina).


\end{document}